\newtheorem{thm}{Theorem}[section]
\newtheorem{lem}[thm]{Lemma}
\newtheorem{rem}[thm]{Remark}
\numberwithin{equation}{section}
\def\C{\mathbb{C}}
\def\E{\mathbb{E}}
\def\N{\mathbb{N}}
\def\P{\mathbb{P}}
\def\R{\mathbb{R}}
\def\Z{\mathbb{Z}}
\def\CC{\mathcal{C}}
\def\DD{\mathcal{D}}
\def\HH{\mathcal{H}}
\def\SS{\mathcal{S}}
\def\VV{\mathcal{V}}
\def\NN{\mathcal{N}}
\def\QQ{\mathcal{Q}}
\def\supp{\text{\rm supp}}
\def\per{\rm per}
\def\lan{\langle}
\def\ran{\rangle}
\def\da{\downarrow}
\def\ra{\rightarrow}
\def\bs{\backslash}
\def\pa{\partial}
\def\ol{\overline}
\def\al{\alpha}
\def\ep{\epsilon}
\def\ka{\kappa}
\def\la{\lambda}
\def\La{\Lambda}
\def\si{\sigma}
\def\Si{\Sigma}
\def\om{\omega}
\def\Om{\Omega}
\def\de{\delta}
\def\De{\Delta}
\def\Ga{\Gamma}
\def\vp{\varphi}
\begin{document}

\nocite{*}

\title{Lifshitz Tails for Anderson Models with Sign-Indefinite Single-Site Potentials}

\author{Zhongwei Shen\footnote{Email: zzs0004@auburn.edu}\\Department of Mathematics and Statistics\\Auburn University\\Auburn, AL 36849\\USA}

\date{}

\maketitle

\begin{abstract}
We study the spectral minimum and Lifshitz tails for continuum random Schr\"{o}dinger operators of the form  
\begin{equation*}
H_{\om}=-\De+V_{0}+\sum_{i\in\Z^{d}}\om_{i}u(\cdot-i),
\end{equation*}
where $V_{0}$ is the periodic potential, $\{\om_{i}\}_{i\in\Z^{d}}$ are i.i.d random variables and $u$ is the sign-indefinite impurity potential. Recently, this model has been  proven to exhibit Lifshitz tails near the bottom of the spectrum under the small support assuption of $u$ and the reflection symmetry assumption of $V_{0}$ and $u$.  We here drop the reflection symmetry assumption of $V_{0}$ and $u$. We first give characterizations of the bottom of the spectrum. Then, we show the existence of Lifshitz tails in the regime where the characterization of the bottom of the spectrum is explicit. In particular, this regime covers the reflection symmetry case.
\\
Keywords: spectral minimum, Lifshitz tail, continuum Anderson model.\\
2010 Mathematics Subject Classification: 35P20, 46N50, 47B80.
\end{abstract}

\tableofcontents

\section{Introduction}\label{sec-intro}

This paper is concerned with the spectral minimum and Lifshitz tails for the following random operator
\begin{equation}\label{main-model}
H_{\om}=-\De+V_{0}+V_{\om}\quad\text{on}\quad\R^{d},
\end{equation}
where $d\in\N$, $V_{0}$ is the background potential and $V_{\om}$ is the random potential of alloy type, that is, $V_{\om}$ has the form $V_{\om}=\sum_{i\in\Z^{d}}\om_{i}u(\cdot-i)$. We assume
\begin{itemize}
\item[\rm(H1)] $V_{0}\in L^{p}_{\rm loc}(\R^{d})$ is $\Z^{d}$-periodic with $p>d$.

\item[\rm(H2)] The single-site potential $u:\R^{d}\rightarrow\R$ is in $L^{p}(\R^{d})$ with $p>d$ and supported in $\CC_{0}=\big(-\frac{1}{2},\frac{1}{2}\big)^{d}$. Both the positive part $u_{+}$ and the negative part $u_{-}$ are non-trivial.

\item[\rm(H3)] $\{\om_{i}\}_{i\in\Z^{d}}$ are independent and identically distributed (i.i.d) random variables on some probability space $(\Om,\mathcal{B},\P)$ with common distribution $\P_{0}$. The support of $\P_{0}$, denoted by $\supp(\P_{0})$, is compact and contains at least two points. 
\end{itemize}

By the canonical realization of stochastic processes, we take $\Om=(\supp\P_{0})^{\Z^{d}}$, and thus, $\P$ is the product measure $\otimes_{i\in\Z^{d}}\P_{0}$. We denote by $\E$ the expectation corresponding to $\P$.

Under $\rm(H1)$, $\rm(H2)$ and $\rm(H3)$, $H_{\om}$ is almost surely self-adjoint on $H^{2}(\R^{d})$ and $\Z^{d}$-ergodic, and hence, $\si(H_{\om})=\Si$ a.e. $\om\in\Om$ for some $\Si\subset\R$ (see e.g. \cite{CL90,Ki89}). Let
\begin{equation*}
E_{0}=\inf\Si.
\end{equation*}
Set $a=\inf\supp(\P_{0})$ and $b=\sup\supp(\P_{0})$. Then $a<b$ by $\rm(H3)$.

It is well-known (see e.g. \cite{Ki89,KM07,KW05,St01}) that under quite general assumptions on $u$ (but $u$ is sign-definite) the integrated density of states (IDS) of $H_{\om}$ exists Lifshitz tails near $E_{0}$, which is given by $\inf\si(H_{a})$ if $u\geq0$ and by $\inf\si(H_{b})$ if $u\leq0$, where
\begin{equation}\label{periodic-operators}
H_{t}=-\De+V_{0}+t\sum_{i\in\Z^{d}}u(\cdot-i),\quad t\in[a,b].
\end{equation}
The characterization of $E_{0}$, which is given as above in the case of $u$ being sign-definite, becomes a problem when $u$ changes its sign, since $H_{\om}$ no longer depends monotonously on $\om=\{\om_{i}\}_{i\in\Z^{d}}$. Moreover, due to this non-monotonous dependence, the existence or non-existence of Lifshitz tails for $H_{\om}$ is unknown for a quite long period until the recent work \cite{KN09,KN10} of Klopp and Nakamura.

To motivate the current paper, we roughly describe the results obtained in \cite{KN09} by Klopp and Nakamura. Under assumptions $\rm(H1)$, $\rm(H2)$ and $\rm(H3)$, and an additional reflection symmetry assumption on $V_{0}$ and $u$, that is, 
\begin{equation}\label{reflection-symmetric}
\begin{split}
V_{0}(x)&=V_{0}((-1)^{\tau_{1}}x_{1},\dots(-1)^{\tau_{d}}x_{d}),\\
u(x)&=u((-1)^{\tau_{1}}x_{1},\dots(-1)^{\tau_{d}}x_{d})
\end{split}
\end{equation}
for any $(\tau_{1},\dots,\tau_{d})\in\{0,1\}^{d}$ and any $x=(x_{1},\dots,x_{d})\in\R^{d}$, they proved a characterization of the bottom of the spectrum. More precisely, denote by $H^{N}_{t}$ the restriction of $H_{t}$ to $L^{2}(\CC_{0})$ with Neumann boundary condition on $\partial\CC_{0}$ and by $E(t)$ the ground state energy of $H_{t}^{N}$. They proved $E_{0}=\min\big\{E(a),E(b)\big\}$. Then, using this characterization of $E_{0}$ and an operator theoretical trick (a comparison method), they showed that the IDS of \eqref{main-model} exhibits Lifshitz tails near $E_{0}$ if $E(a)\neq E(b)$. They also constructed an interesting Bernoulli model showing that Lifshitz tails may fail when $E(a)=E(b)$. Later, they proved in \cite{KN10}, using a quite different method, the existence of Lifshitz tails near $E_{0}$ in the case $E(a)=E(b)$ with additional weak assumptions.

Inspired by the work of Klopp and Nakamura \cite{KN09,KN10}, we study the spectral minimun and Lifshitz tails for the model \eqref{main-model} without the reflection symmetry assumption \eqref{reflection-symmetric} on $V_{0}$ and $u$. After dropping this reflection symmetry assumption, Neumann operators working very well in the reflection symmetry case do not work anymore. A natural substitute for Neumann boundary condition is the so-called Mezincescu boundary condition (see e.g. \cite{Me87}). To be more specific, let $\vp\in C^{1}(\R^{d})$ be real-valued, strictly positive and $\Z^{d}$-periodic. Let $\textbf{n}_{0}:\pa\CC_{0}\ra\R^{d}$ be the outer normal of $\pa\CC_{0}$ and define $\chi_{0}:\pa\CC_{0}\ra\R$ by setting
\begin{equation*}
\chi_{0}(x)=-\frac{1}{\vp(x)}\textbf{n}_{0}(x)\cdot\nabla\vp(x),\quad x\in\partial\CC_{0}.
\end{equation*}
For $t\in[a,b]$, denote by $H_{t,\CC_{0}}$ the restriction of $H_{t}$ (given in \eqref{periodic-operators}) to $L^{2}(\CC_{0})$ with Mezincescu boundary condition defined via $\chi_{0}$ (or $\vp$) (see Section \ref{Mezin-bd} for the definition) on $\partial\CC_{0}$ and by $E_{\vp}(t)$ the ground state energy of $H_{t,\CC_{0}}$. Note we should use the notation $H_{t,\CC_{0}}^{\chi_{0}}$ (or may be more precisely $H_{t,\CC_{0}}^{\vp}$, since $\CC_{0}$ and $\vp$ determine $\chi_{0}$), but we here suppress the superscript, and we will use suppressed notations in the sequel.

Here, we are particularly interested in the cases $\vp=\vp_{a}$ and $\vp=\vp_{b}$, where $\vp_{a}$ and $\vp_{b}$ are ground states of $H_{a}$ and $H_{b}$, respectively. We point out that $\vp_{a}$ and $\vp_{b}$ can be chosen to be continuously differentiable and strictly positive under the assumption that $p>d$ (see e.g. \cite[Theorem C.2.4]{Si82}). In fact, we can relax this assumption and require only $p>\frac{d}{2}$ in $\CC_{0}$ (see \cite[Remarks 2.9 \rm(iii)]{KW05}). Therefore, if $V_{0}\equiv0$, we only need to assume $p>\frac{d}{2}$ since $u$ is supported in $\CC_{0}$.

As in \cite{KN09}, we can use $E_{\vp}(t)$ to characterize the bottom of the spectrum $E_{0}$. This gives our first main result.

\begin{thm}\label{thm-spectral-minimum}
Suppose $\rm (H1)$, $\rm(H2)$ and $\rm(H3)$. 
\begin{itemize}
\item[$\rm(i)$] If $E_{\vp_{a}}(a)\leq E_{\vp_{a}}(b)$, then $E_{0}=E_{\vp_{a}}(a)$.
\item[$\rm(ii)$] If $E_{\vp_{b}}(a)\geq E_{\vp_{b}}(b)$, then $E_{0}=E_{\vp_{b}}(b)$.
\item[$\rm(iii)$] If $E_{\vp_{a}}(a)>E_{\vp_{a}}(b)$ and $E_{\vp_{b}}(a)<E_{\vp_{b}}(b)$, then
\begin{equation*}
E_{0}\in\big[E_{\vp_{a}}(b),E_{\vp_{a}}(a)\big]\cap\big[E_{\vp_{b}}(a),E_{\vp_{b}}(b)\big]
\end{equation*}
In particular, there exist $t_{a},t_{b}\in[a,b]$ such that $E_{\vp_{a}}(t_{a})=E_{0}=E_{\vp_{b}}(t_{b})$.
\end{itemize}
\end{thm}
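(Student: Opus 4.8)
The plan is to obtain two-sided bounds on $E_0$ by combining a lower bound coming from Mezincescu boundary conditions with an upper bound coming from explicit trial functions built from the periodic ground states $\vp_a$ and $\vp_b$. For the lower bound, the key fact about Mezincescu boundary conditions is Dirichlet–Neumann-type bracketing: decomposing $\R^d = \bigcup_{i\in\Z^d}(\CC_0+i)$ and imposing Mezincescu conditions (defined via a fixed periodic weight $\vp$) on each boundary $\pa(\CC_0+i)$ gives an operator that lies below $H_\om$ in the form sense, so $E_0 = \inf\si(H_\om) \geq \inf_\om \inf_{i} E_\vp(\om_i)$, where $E_\vp(t)$ is the ground state energy of $H_{t,\CC_0}$. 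Since $\{\om_i\}$ ranges over $\supp(\P_0)\subset[a,b]$ a.s. and the relevant single-site operator on $\CC_0$ depends on the local coupling only through $t\om_i$, this reduces to $E_0 \geq \inf_{t\in[a,b]} E_\vp(t)$ for either choice $\vp = \vp_a$ or $\vp = \vp_b$. The crucial additional input — which is presumably established earlier in the paper, or follows from a direct computation with the weighted quadratic form — is that $t\mapsto E_\vp(t)$ is \emph{monotone} on $[a,b]$ when $\vp$ is chosen to be $\vp_a$ or $\vp_b$: indeed $E_{\vp_a}(a) = E_a := \inf\si(H_a)$ because $\vp_a$ itself is the ground state and the Mezincescu condition it induces is exactly the natural condition making $\vp_a|_{\CC_0}$ an eigenfunction, and similarly $E_{\vp_b}(b) = E_b$; moreover the dependence of the weighted form on $t$ is affine, which forces $t\mapsto E_\vp(t)$ to be concave, hence its minimum over $[a,b]$ is attained at an endpoint.

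With these ingredients the three cases fall out. In case $\rm(i)$, concavity of $t\mapsto E_{\vp_a}(t)$ together with $E_{\vp_a}(a)\leq E_{\vp_a}(b)$ is not quite enough by itself, so instead I use that the minimum of a concave function on $[a,b]$ is at an endpoint: if $E_{\vp_a}(a) \leq E_{\vp_a}(b)$ then $\min_{t\in[a,b]} E_{\vp_a}(t) = E_{\vp_a}(a)$, giving $E_0 \geq E_{\vp_a}(a)$. For the matching upper bound, $\vp_a$ is a genuine generalized eigenfunction of the periodic operator $H_a$ at energy $E_a = E_{\vp_a}(a)$, and the configuration $\om_i \equiv a$ is admissible (it lies in $\Si$-a.s. behavior via approximation, since $a\in\supp\P_0$); a Weyl-sequence / periodic-approximation argument then yields $E_a \in \si(H_\om) = \Si$, so $E_0 \leq E_a = E_{\vp_a}(a)$. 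Combining, $E_0 = E_{\vp_a}(a)$. Case $\rm(ii)$ is identical with $a,b$ and $\vp_a,\vp_b$ interchanged. For case $\rm(iii)$, the lower bounds from both weights give $E_0 \geq \min\{E_{\vp_a}(a),E_{\vp_a}(b)\} = E_{\vp_a}(b)$ and $E_0 \geq \min\{E_{\vp_b}(a),E_{\vp_b}(b)\} = E_{\vp_b}(a)$; the upper bounds from the constant configurations $a$ and $b$ give $E_0 \leq \inf\si(H_a) = E_{\vp_a}(a)$ and $E_0 \leq \inf\si(H_b) = E_{\vp_b}(b)$. This yields exactly $E_0 \in [E_{\vp_a}(b),E_{\vp_a}(a)] \cap [E_{\vp_b}(a),E_{\vp_b}(b)]$. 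Finally, since $t\mapsto E_{\vp_a}(t)$ is continuous on $[a,b]$ with $E_{\vp_a}(b) \leq E_0 \leq E_{\vp_a}(a)$, the intermediate value theorem produces $t_a$ with $E_{\vp_a}(t_a) = E_0$, and likewise $t_b$ with $E_{\vp_b}(t_b) = E_0$.

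I expect the main obstacle to be the careful verification of the Mezincescu bracketing inequality in this sign-indefinite setting, i.e. showing rigorously that gluing the single-site Mezincescu operators on the cells $\CC_0 + i$ gives a lower bound for $H_\om$ uniformly in $\om$, including the domain and form-core issues when $V_0, u \in L^p_{\rm loc}$ with $p>d$ (or $p>d/2$ locally). The standard trick is the ground-state (Doob $h$-transform) representation: conjugating $H_\om - E$ by the positive periodic weight $\vp$ turns the quadratic form into $\int \vp^2 |\nabla(f/\vp)|^2 + (\text{local terms})$, and Mezincescu conditions are precisely those for which this weighted Dirichlet form decouples additively over the cells with no boundary contribution — which is why $\inf\si \geq \inf_i E_\vp(\om_i)$. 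The secondary technical point is justifying $E_a, E_b \in \Si$ via approximation by periodic configurations inside $\supp\P_0$; this is routine but must be stated. Everything else — concavity/affineness in $t$, the endpoint-minimum observation, and the intermediate value argument — is then elementary.
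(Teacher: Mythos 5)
Your proposal is correct and follows essentially the same route as the paper: the Mezincescu bracketing over the cells $\CC_{i}$ plus concavity of $t\mapsto E_{\vp}(t)$ (an infimum of affine functionals) gives $E_{0}\geq\min\{E_{\vp}(a),E_{\vp}(b)\}$, while the inheritance property $E_{\vp_{t}}(t)=\inf\si(H_{t})$ together with periodic approximation ($\si(H_{t})\subset\Si$ for $t\in\supp\P_{0}$) gives $E_{0}\leq E_{\vp_{t}}(t)$, and the intermediate value theorem handles $\rm(iii)$. The only cosmetic slip is your passing claim that $t\mapsto E_{\vp}(t)$ is \emph{monotone}; that is neither needed nor generally true, and you in fact only use the (correct) endpoint-minimum property of concave functions, exactly as the paper does.
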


The proof of the above theorem is given in Subsection \ref{sec-spectral-minimum}. We next study the existence of Lifshitz tails. The lower bound with exponent $\frac{d}{2}$ has been established in \cite[Theorem 0.2]{KN09}. We here focus on the upper bound in the case of Theorem \ref{thm-spectral-minimum}$\rm(i)$ and $\rm(ii)$. Due to technical reasons, we consider the following three cases:
\begin{itemize}
\item[\rm(I)] $E_{\vp_{a}}(a)<E_{\vp_{a}}(b)$ or $E_{\vp_{b}}(a)>E_{\vp_{b}}(b)$;
\end{itemize}
\begin{itemize}
\item[\rm(II)] $E_{\vp_{a}}(a)=E_{\vp_{a}}(b)$ and $E_{\vp_{b}}(a)=E_{\vp_{b}}(b)$;
\end{itemize}
\begin{itemize}
\item[\rm(III)] $E_{\vp_{a}}(a)=E_{\vp_{a}}(b)$ and $E_{\vp_{b}}(a)<E_{\vp_{b}}(b)$, or, $E_{\vp_{a}}(a)>E_{\vp_{a}}(b)$ and $E_{\vp_{b}}(a)=E_{\vp_{b}}(b)$;
\end{itemize}

If $V_{0}$ and $u$ are reflection symmetric as considered in \cite{KN09,KN10}, then $\nabla\vp_{a}$ and $\nabla\vp_{b}$ vanish on $\partial\CC_{0}$. This, in particular, says that Mezincescu boundary conditions defined via $\vp_{a}$ and $\vp_{b}$ reduce to Neumann boundary condition, and hence, $\rm(I)$ and $\rm(II)$ cover all the possibilities.

Before stating corresponding results, we need
\begin{itemize}
\item[\rm(H4)] $V_{0}$ and $u$ are bounded from below.
\end{itemize}
An interpretation of $\rm(H4)$ is as follows: we will define the IDS using eigenvalue counting functions of operators with Mezincescu boundary conditions. The lower boundedness of $V_{0}$ and $u$ then ensures that such defined IDS coincides with the one with usual definition. See \cite[Theorem 1.3]{Mi02}.

Now, for the upper bound in the case $\rm(I)$, we prove in Section \ref{sec-lifshitz-tail} the following

\begin{thm}\label{thm-upper-bound}
Suppose $\rm (H1)$, $\rm(H2)$, $\rm(H3)$ and $\rm(H4)$. If either $E_{\vp_{a}}(a)<E_{\vp_{a}}(b)$ or $E_{\vp_{b}}(a)>E_{\vp_{b}}(b)$ is satisfied, then
\begin{equation*}
\limsup_{E\da E_{0}}\frac{\ln|\ln N(E)|}{\ln(E-E_{0})}\leq-\frac{d}{2}.
\end{equation*}
\end{thm}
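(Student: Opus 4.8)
The plan is to establish the Lifshitz-tail upper bound via the standard Dirichlet–Neumann bracketing scheme, but with Neumann conditions replaced by Mezincescu conditions (defined via $\vp_a$ or $\vp_b$, depending on which inequality in the hypothesis holds), and to run the argument on the periodic operator whose ground state energy realizes $E_0$ by Theorem \ref{thm-spectral-minimum}. Assume without loss of generality that $E_{\vp_a}(a)<E_{\vp_a}(b)$, so $E_0=E_{\vp_a}(a)$ by Theorem \ref{thm-spectral-minimum}(i), and write $\vp=\vp_a$. For a large box $\Lambda_L=(-L/2,L/2)^d$ (with $L$ an odd integer), let $H_{\om,\Lambda_L}$ be the restriction of $H_\om$ to $L^2(\Lambda_L)$ with Mezincescu boundary condition defined via $\vp$ on $\partial\Lambda_L$. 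Mezincescu conditions, being of the form $\mathbf{n}\cdot\nabla\psi=\chi\psi$ with $\chi=-\vp^{-1}\mathbf{n}\cdot\nabla\vp$, are a Robin-type condition for which the ground state of the corresponding periodic operator $H_{a,\Lambda_L}$ is exactly $\vp|_{\Lambda_L}$ with eigenvalue $E_0$; crucially, they enjoy the decoupling (super-additivity of Neumann type) that lets one bound $N(E)$ from above by $\E\big[\,\#\{\text{eigenvalues of }H_{\om,\Lambda_L}\text{ below }E\}\,\big]/L^d$, and the lower-boundedness hypothesis (H4) together with \cite[Theorem 1.3]{Mi02} guarantees this Mezincescu-IDS agrees with the usual one.

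The heart of the matter is then a one-box probabilistic estimate: for $E=E_0+\epsilon$ with $\epsilon$ small and $L\sim\epsilon^{-1/2}$, show that
\begin{equation*}
\P\big(\,H_{\om,\Lambda_L}\text{ has an eigenvalue }\le E_0+\epsilon\,\big)\le e^{-cL^d}=e^{-c\epsilon^{-d/2}},
\end{equation*}
which upon inserting into the bracketing bound yields $N(E_0+\epsilon)\le C L^d e^{-cL^d}$ and hence the claimed $\limsup$. To prove the one-box estimate I would compare $H_{\om,\Lambda_L}$ against the deterministic periodic operator $H_{a,\Lambda_L}$ with the same boundary condition. Write $H_{\om,\Lambda_L}=H_{a,\Lambda_L}+\sum_{i\in\Lambda_L\cap\Z^d}(\om_i-a)u(\cdot-i)$. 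The periodic Mezincescu operator $H_{a,\Lambda_L}$ has ground state $\vp$, a spectral gap above $E_0$ of size bounded below uniformly in $L$ (this is the content of the argument in \cite{KN09}: the next Mezincescu eigenvalue on $\Lambda_L$ is $E_0+O(L^{-2})$, so one needs $L\sim\epsilon^{-1/2}$ to make room). Since $u$ changes sign, the perturbation $\sum(\om_i-a)u(\cdot-i)$ is not sign-definite, so the usual monotonicity trick of \cite{Ki89} fails; this is exactly the obstacle that makes the sign-indefinite case hard. The resolution — mirroring Klopp–Nakamura's comparison method — is to use the strict inequality $E_{\vp}(a)<E_{\vp}(b)$ quantitatively: a first-order perturbation computation gives $\langle\vp,\,\sum(\om_i-a)u(\cdot-i)\,\vp\rangle=\sum_i(\om_i-a)\int u(x-i)\vp(x)^2\,dx$, and the coefficient $\int u(x-i)\vp^2$ is, after periodization, proportional to $\frac{d}{dt}E_{\vp}(t)\big|_{t=a}\cdot(\text{positive normalization})$; the hypothesis forces this derivative (more precisely the finite difference $E_{\vp}(b)-E_{\vp}(a)$, combined with concavity in $t$ of the ground-state energy) to be positive, so each term $(\om_i-a)\int u(\cdot-i)\vp^2\ge 0$ with a definite positive expectation whenever $\om_i>a$.

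Concretely, the one-box bound would be proved as follows. If $\psi$ is a normalized ground state of $H_{\om,\Lambda_L}$ with eigenvalue $\le E_0+\epsilon$, then expanding in the Mezincescu eigenbasis of $H_{a,\Lambda_L}$ and using the spectral gap $\ge \kappa L^{-2}$, the component of $\psi$ orthogonal to $\vp$ is $O(\sqrt{\epsilon}/\sqrt{\kappa L^{-2}})=O(L\sqrt{\epsilon})$, which is small for $L\ll\epsilon^{-1/2}$; choosing the constant in $L\sim\epsilon^{-1/2}$ appropriately, $\psi$ is within, say, $1/2$ of $\vp$ in $L^2$. Then
\begin{equation*}
\epsilon\ge\langle\psi,(H_{\om,\Lambda_L}-E_0)\psi\rangle\ge \tfrac{1}{2}\!\!\sum_{i\in\Lambda_L\cap\Z^d}(\om_i-a)\!\int\! u(x-i)\vp(x)^2\,dx - C\,(\text{error from }\psi\neq\vp),
\end{equation*}
and since the quantity $q_i:=\int u(x-i)\vp^2$ has constant sign (positive, by the hypothesis as above) and is bounded below in absolute value after accounting for edge cells, the event in question forces $\sum_i(\om_i-a)q_i\le C\epsilon$, i.e. all but $O(\epsilon L^d)=o(L^d)$ of the $\om_i$ must lie within $o(1)$ of $a$. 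Since $\supp\P_0$ contains a point $>a$ (it contains at least two points and $a=\inf\supp\P_0$), each $\om_i$ has probability bounded away from $1$ of being that close to $a$, so this event has probability $\le e^{-cL^d}$ by independence. The one remaining technical point is handling the cells near $\partial\Lambda_L$, where $\vp$ restricted to $\Lambda_L$ is not quite the periodic ground state; this is a surface-order correction ($O(L^{d-1})$ cells) that is absorbed since we only need $L^d$ in the exponent — a standard maneuver. The main obstacle, to reiterate, is replacing the broken monotonicity by the quantitative use of the strict Mezincescu-energy inequality, exactly the mechanism that also explains why case (II), where equality holds, requires an entirely different treatment.
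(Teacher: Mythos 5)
There is a genuine gap at the heart of your one-box estimate. To get $\|\psi-\vp\|$ small you invoke the spectral gap of $H_{a,\La_L}$, but that step needs $\lan\psi,(H_{a,\La_L}-E_{0})\psi\ran\lesssim\ep$, i.e. $\lan\psi,W_{\om}\psi\ran\geq -O(\ep)$ where $W_{\om}=\sum_{i}(\om_{i}-a)u(\cdot-i)$; since $u$ is sign-indefinite this is exactly what is \emph{not} known a priori — it is the very obstruction the theorem is about — so the argument is circular: you use closeness of $\psi$ to $\vp$ to get effective positivity of the random term, and effective positivity of the random term to get closeness of $\psi$ to $\vp$. Even granting $\|\psi-\vp\|_{L^{2}}\leq 1/2$, the bookkeeping does not close: with $\vp$ normalized on $\La_{L}$ the main term $\sum_{i}(\om_{i}-a)\int u(\cdot-i)\vp^{2}$ is $O(1)$ while the target threshold is $\ep\sim L^{-2}$, so the ``error from $\psi\neq\vp$'' (cross terms $\lan\vp,W_{\om}\psi^{\perp}\ran$ and $\lan\psi^{\perp},W_{\om}\psi^{\perp}\ran$, which moreover require $H^{1}$ control of $\psi^{\perp}$ because $u\in L^{p}$ is only form-bounded) would have to be $O(\ep)$, not merely bounded; an $O(1)$ error swamps everything and yields no constraint on the $\om_{i}$.

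The paper avoids this by a deterministic, cell-wise operator comparison rather than a perturbative analysis of the random ground state: from $E_{\vp_{a}}(b)>E_{\vp_{a}}(a)$ and regular perturbation theory one finds $\beta>b$ with $\inf\si(H_{\beta,\CC_{0}})-E_{0}=\de>0$, writes $H_{t,\CC_{0}}$ as a convex combination of $H_{a,\CC_{0}}$ and $H_{\beta,\CC_{0}}$, and obtains
\begin{equation*}
H_{a,\CC_{0}}-E_{0}+(t-a)\leq C\,(H_{t,\CC_{0}}-E_{0})\quad\text{for all }t\in[a,b],
\end{equation*}
which, summed over cells via the Mezincescu bracketing, gives $N(E)\leq N_{a}(C(E-E_{0}))$ with $N_{a}$ the IDS of the \emph{monotone} model $H_{a}-E_{0}+\sum_{i}(\om_{i}-a)1_{\CC_{0}}(\cdot-i)$; the standard machinery (Temple's inequality, the van Hove bound on the counting function of the periodic operator, large deviations) then applies to $N_{a}$. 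Your observation that concavity plus $E_{\vp_{a}}(a)<E_{\vp_{a}}(b)$ forces $\int u\,\vp_{a}^{2}>0$ is the right germ — it is the same positivity that powers the comparison — but to be usable it must be upgraded to this operator-level inequality (or some equivalent reduction to a sign-definite model); as written, your direct Temple-style attack on the non-monotone operator does not go through.
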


To state the results in cases $\rm(II)$ and $\rm(III)$, we first make a convention: the Mezincescu boundary condition is defined via $\vp_{a}$ if $E_{\vp_{a}}(a)=E_{\vp_{a}}(b)$ and $E_{\vp_{b}}(a)\leq E_{\vp_{b}}(b)$, and defined via $\vp_{b}$ if $E_{\vp_{a}}(a)\geq E_{\vp_{a}}(b)$ and $E_{\vp_{b}}(a)=E_{\vp_{b}}(b)$. For the case $\rm(II)$, we need

\begin{itemize}
\item[\rm(H5)] Set $\SS_{0}=(-\frac{1}{2},\frac{1}{2})^{d-1}\times(-\frac{1}{2},\frac{3}{2})$. Consider $H_{\om,\SS_{0}}$ with $\om_{(0,0)},\om_{(0,1)}\in\{a,b\}$ and $\om_{(0,0)}\neq\om_{(0,1)}$. We assume
\begin{equation*}
\inf\si(H_{\om,\SS_{0}})>E_{0}.
\end{equation*}
\end{itemize}

\begin{thm}\label{thm-Lifshitz-tail-non-critical}
Suppose $\rm (H1)$, $\rm(H2)$, $\rm(H3)$, $\rm(H4)$ and $\rm(H5)$. If
$E_{\vp_{a}}(a)=E_{\vp_{a}}(b)$ and $E_{\vp_{b}}(a)=E_{\vp_{b}}(b)$
are satisfied, then
\begin{equation*}
\limsup_{E\da E_{0}}\frac{\ln|\ln N(E)|}{\ln(E-E_{0})}\leq-\frac{1}{2}.
\end{equation*}
\end{thm}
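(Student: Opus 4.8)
The plan is to establish the upper bound on the IDS by a Dirichlet--Neumann bracketing argument adapted to Mezincescu boundary conditions, combined with a large-deviation estimate on the probability that a box configuration produces low-lying spectrum. Recall that Theorem~\ref{thm-upper-bound} handled case $\rm(I)$, where a gap $E_{\vp}(a)\neq E_{\vp}(b)$ is available; here in case $\rm(II)$ that gap is absent, so the improvement in the spectral minimum must come not from a single site but from an \emph{interaction} between two neighbouring sites carrying different values $a$ and $b$. This is precisely what hypothesis $\rm(H5)$ encodes: on the double box $\SS_{0}$, any mixed configuration $\om_{(0,0)}\neq\om_{(0,1)}$ with values in $\{a,b\}$ strictly raises the ground state energy above $E_{0}$. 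Accordingly, the expected Lifshitz exponent degrades from $-\frac d2$ to $-\frac12$, since the energy cost of confinement in a box of side $L$ behaves like $L^{-2}$ but now the relevant ``cheap'' configurations are constant (all $a$ or all $b$) along hyperplanes except for rare defects, so only one direction contributes the quadratic cost.

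First I would fix $L\in\N$ and, using Mezincescu boundary conditions defined via the appropriate ground state $\vp_{a}$ (the convention stated before the theorem), decompose $\R^{d}$ into boxes $\Lambda_{L}(\ga)$ of side $L$ indexed by $\ga\in L\Z^{d}$. Mezincescu bracketing (a Dirichlet--Neumann-type bracketing that is monotone in the expected direction; see the references to \cite{Me87} and the discussion in the paper) gives
\begin{equation*}
N(E)\leq \frac{1}{L^{d}}\,\E\Big[\#\big\{\text{eigenvalues of }H_{\om,\Lambda_{L}(0)}\text{ below }E\big\}\Big]\leq \frac{1}{L^{d}}\,\P\big(\inf\si(H_{\om,\Lambda_{L}(0)})\leq E\big)\cdot C L^{d},
\end{equation*}
where in the last step one controls the number of eigenvalues in a window of fixed size by a Weyl-type bound uniform in $\om$ (here $\rm(H4)$ is used, as flagged in the text, to compare with the standard IDS). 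So the task reduces to showing that $\P(\inf\si(H_{\om,\Lambda_{L}(0)})\leq E_{0}+\epsilon)$ is super-polynomially small when $\epsilon\sim L^{-2}$.

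The core step is the \emph{finite-volume lower bound on the ground state energy}: I would show that there is a constant $c>0$ such that for $L$ large, on any box $\Lambda_{L}(0)$,
\begin{equation*}
\inf\si(H_{\om,\Lambda_{L}(0)})\geq E_{0}+\frac{c}{L^{2}}\cdot\big(\text{number of ``defect pairs'' in a fixed coordinate direction}\big)^{+},
\end{equation*}
or more precisely that if $\om$ restricted to $\Lambda_{L}(0)$ is \emph{not} identically $a$ (respectively identically $b$) along some chosen line of sites, then one can extract from $\rm(H5)$ a fixed energy gap $\de_{0}=\inf\si(H_{\om,\SS_0})-E_0>0$ localized on a translate of $\SS_{0}$, and an IMS-type localization / bracketing in the remaining directions costs only $O(L^{-2})$; balancing gives the quadratic scaling. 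Combined with the degenerate-in-$\{a,b\}$ equalities $E_{\vp_a}(a)=E_{\vp_a}(b)$, the only configurations within $cL^{-2}$ of $E_0$ are those that are constant along, say, the $d$-th coordinate with at most $o(L)$ exceptions, which forces agreement of $\Omega(L^{d-1}\cdot L)=\Omega(L^{d})$ many i.i.d.\ variables in a way whose probability is at most $(1-p_{0})^{cL^{d}}$, hence $e^{-cL^{d}}\leq e^{-c\epsilon^{-d/2}}$; taking $\ln|\ln N(E)|$ yields the exponent $-\frac12$ after also accounting that the confinement cost in the one remaining free direction is what sets $\epsilon\sim L^{-2}$, i.e.\ $L\sim\epsilon^{-1/2}$ and the defect count needed is $\Omega(L)$, so the probability is $e^{-cL}=e^{-c\epsilon^{-1/2}}$, giving exactly $-\tfrac12$.

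The main obstacle I anticipate is making the two-scale reduction rigorous: one must convert the \emph{qualitative} hypothesis $\rm(H5)$ (strict inequality on the single double-box $\SS_0$) into a \emph{quantitative, additive} lower bound on the ground state energy of the large box that correctly separates the $L^{-2}$ confinement cost from the $O(1)$ defect cost, uniformly over all configurations and without losing the combinatorial gain. This requires a careful partition-of-unity / IMS localization that respects the Mezincescu boundary condition (so that the localized pieces remain comparable to Mezincescu-restricted operators on sub-boxes rather than picking up uncontrolled boundary terms), together with a perturbative argument showing the defect gap $\de_0$ survives when the defect is embedded in a larger, otherwise-periodic background. Handling the asymmetry of the Mezincescu condition — it is not reflection-invariant, so the ground state $\vp_a$ does not split nicely across box faces — is the technical heart, and is exactly where the earlier parts of the paper (the characterization of $E_0$ via $E_{\vp_a}$, $E_{\vp_b}$, and the convention fixing which $\vp$ to use) are indispensable.
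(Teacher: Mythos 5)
Your high-level strategy coincides with the paper's: reduce to a finite-volume probability estimate, prove a deterministic bound $\inf\si(H_{\om,\La_{L}})\geq E_{0}+CL^{-2}$ for configurations containing suitable ``defects'' in every column parallel to the $d$-th axis, bound the probability of the complementary event by $L^{d-1}e^{-cL}$, and set $L\sim(E-E_{0})^{-1/2}$. But the technical heart --- converting the $O(1)$ gap supplied by $\rm(H5)$ on the double box $\SS_{0}$ into an additive $C/L^{2}$ lower bound on a tube of length $L$ --- is only gestured at, and the IMS localization you propose does not obviously close: cutoffs on scale $L$ produce commutator errors of order $L^{-2}$, i.e.\ exactly the size of the gain you need, so the constant is lost without further structure. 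The paper instead proves this in Section \ref{sec-lower-bound-gse} by a quasi-one-dimensional argument: each column $\SS_{qL}$ is chopped at a defect into segments capped by the defect, and on a capped segment one combines the positivity of a Dirichlet-to-Neumann-type operator $T_{\la}$ on the interface (Lemma \ref{lem-estimate}, which quantifies that the cap's ground state energy exceeds $E_{0}$), the Poincar\'{e}-type inequality \eqref{estimate-1} with its boundary trace term, and the ground state transform by $\vp_{a}$ (Lemma \ref{lem-estimate-1}). This machinery, together with the dichotomy of Lemma \ref{lem-key} controlling $\inf\vp_{M}^{*}/\sup\vp_{M}^{*}$ uniformly in $M$, is what yields an $M$-independent constant in Theorems \ref{thm-lower-bound-estimate} and \ref{thm-lower-bound-estimate-1}; none of it is present in your outline.

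The probabilistic step also has a gap: your counting is based on neighbouring sites taking the exact values $a$ and $b$, which occurs with probability zero unless $\P_{0}$ has atoms at $a$ and $b$. The paper therefore splits into Bernoulli and non-Bernoulli cases. In the non-Bernoulli case the defects are adjacent pairs of sites with values in $[a+\ep,b-\ep]$, whose energetic cost comes not from $\rm(H5)$ but from the strict concavity of $t\mapsto E_{\vp_{a}}(t)$ (so that $\min\{E_{\vp_{a}}(a+\ep),E_{\vp_{a}}(b-\ep)\}>E_{0}$, see \eqref{a-condition}); passing from the extreme configurations $\{a,b,a+\ep,b-\ep\}^{\Z^{d}\cap\La_{L}}$ to arbitrary $\om\in[a,b]^{\Z^{d}\cap\La_{L}}$ then requires the concavity of $\om\mapsto E_{L}(\om)$ (Lemma \ref{lem-estimate-key-4}). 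Finally, your estimate first asserts a probability $e^{-cL^{d}}$, which would correspond to requiring every column to be defect-free; the relevant low-energy event only requires that \emph{some} column fail, so what enters is the single-column probability $e^{-cL}$ together with a union bound over the $L^{d-1}$ columns --- you reach the right exponent $-\frac{1}{2}$ in the end, but by self-correction rather than by the correct event decomposition.
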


For case $\rm(III)$, we also need

\begin{itemize}
\item[\rm(H6)]  Set $\SS_{0}=(-\frac{1}{2},\frac{1}{2})^{d-1}\times(-\frac{1}{2},\frac{3}{2})$. Consider $H_{\om,\SS_{0}}$ with $\om_{(0,0)}=\om_{(0,1)}=b$ if $E_{\vp_{a}}(a)=E_{\vp_{a}}(b)$ and $E_{\vp_{b}}(a)<E_{\vp_{b}}(b)$, and $\om_{(0,0)}=\om_{(0,1)}=a$ if $E_{\vp_{a}}(a)>E_{\vp_{a}}(b)$ and $E_{\vp_{b}}(a)=E_{\vp_{b}}(b)$. Denote by $\vp_{\SS_{0}}$ the ground state of $H_{\om,\SS_{0}}$. If $\inf\si(H_{\om,\SS_{0}})=E_{0}$, we assume
\begin{equation*}
\nu=1,
\end{equation*}
where the constant $\nu>0$ satisfies $\vp_{\SS_{0}}|_{\CC_{(0,1)}}=\nu\vp_{\SS_{0}}|_{\CC_{(0,0)}}(\cdot-(0,1))$.
\end{itemize}

\begin{thm}\label{thm-Lifshitz-tail-non}
Suppose $\rm (H1)$, $\rm(H2)$, $\rm(H3)$, $\rm(H4)$, $\rm(H5)$ and $\rm(H6)$. If either
\begin{equation*}
E_{\vp_{a}}(a)=E_{\vp_{a}}(b)\quad\text{and}\quad E_{\vp_{b}}(a)<E_{\vp_{b}}(b)
\end{equation*}
or
\begin{equation*}
E_{\vp_{a}}(a)>E_{\vp_{a}}(b)\quad\text{and}\quad E_{\vp_{b}}(a)=E_{\vp_{b}}(b)
\end{equation*}
is satisfied, then
\begin{equation*}
\limsup_{E\da E_{0}}\frac{\ln|\ln N(E)|}{\ln(E-E_{0})}\leq-\frac{1}{2}.
\end{equation*}
\end{thm}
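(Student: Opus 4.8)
### Proof proposal for Theorem~\ref{thm-Lifshitz-tail-non}

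The plan is to follow the same comparison-and-periodic-approximation strategy used for Theorem~\ref{thm-upper-bound}, but carried out at the scale of the doubled cell $\SS_{0}$ rather than the unit cell $\CC_{0}$, so that the extra hypothesis (H6) (the ratio $\nu=1$) forces the kind of periodicity mismatch that produces a Lifshitz exponent $\frac{1}{2}$ instead of $\frac{d}{2}$. Assume we are in the first alternative, $E_{\vp_{a}}(a)=E_{\vp_{a}}(b)$ and $E_{\vp_{b}}(a)<E_{\vp_{b}}(b)$ (the other case being symmetric under $a\leftrightarrow b$); then by convention the Mezincescu boundary condition is defined via $\vp_{a}$, and by Theorem~\ref{thm-spectral-minimum}(i) we have $E_{0}=E_{\vp_{a}}(a)$. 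First I would fix a large length scale $\ell\in\N$, tile $\R^{d}$ by boxes $\Lambda_{\ell}$ of side $2\ell+1$, and estimate $N(E)$ from above by $\frac{1}{|\Lambda_{\ell}|}\,\E\big[\#\{\text{eigenvalues of }H_{\om,\Lambda_{\ell}}\text{ below }E\}\big]$, where $H_{\om,\Lambda_{\ell}}$ carries the Mezincescu boundary condition on $\pa\Lambda_{\ell}$ defined via $\vp_{a}$; hypotheses (H1)--(H4) guarantee this surrogate IDS coincides with the usual one (cf.\ the remark after (H4)). The counting function is then controlled by $\P\big(\inf\si(H_{\om,\Lambda_{\ell}})\le E\big)$ together with a deterministic bound (Temple/Dirichlet--Neumann bracketing) on the number of low-lying eigenvalues in a single box.

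The heart of the argument is the lower bound
\begin{equation*}
\inf\si(H_{\om,\Lambda_{\ell}})\ \ge\ E_{0}+\frac{c}{\ell^{2}}
\end{equation*}
valid for every configuration $\om|_{\Lambda_{\ell}}$ except those that are "almost constant", with the constant $c>0$ independent of $\ell$. To get this I would argue by contradiction/compactness on the finite box, or more constructively as follows. Decompose $\Lambda_{\ell}$ into the unit cells $\CC_{i}$, $i\in\Lambda_{\ell}\cap\Z^{d}$, and into the doubled cells $\SS_{i}$ of the type appearing in (H5)--(H6) along the $d$-th coordinate direction. On each unit cell the Mezincescu form domain comparison gives $\langle H_{\om,\Lambda_{\ell}}\psi,\psi\rangle\ge\sum_{i}E_{\vp_{a}}(\om_{i})\|\psi\|^{2}_{L^{2}(\CC_{i})}\ge E_{0}\|\psi\|^{2}$, since $E_{\vp_{a}}(\om_{i})\ge E_{0}$ for all $\om_{i}\in[a,b]$ (this uses $E_{0}=E_{\vp_{a}}(a)=E_{\vp_{a}}(b)$ together with the fact that $t\mapsto E_{\vp_{a}}(t)$, being the ground-state energy of a form-analytic family, is concave, hence $\ge E_{0}$ throughout $[a,b]$). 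Thus $H_{\om,\Lambda_{\ell}}\ge E_{0}$ always, and to gain the $\ell^{-2}$ improvement one localizes the would-be ground state: if the energy is within $\frac{c}{\ell^{2}}$ of $E_{0}$, then on all but $O(\ell^{d-1})$ of the $\CC_{i}$'s (a "good" fraction) the local energy $E_{\vp_{a}}(\om_{i})$ must itself be within $O(\ell^{-2})$ of $E_{0}$, forcing $\om_{i}\in\{a,b\}$ up to $o(1)$; and on every doubled cell $\SS_{i}$ whose two labels differ, (H5) gives a uniform gap $\inf\si(H_{\om,\SS_{0}})-E_{0}=:\delta>0$, while on a doubled cell with both labels equal to $b$ (the value that realizes $E_{0}$ only marginally), if $\inf\si(H_{\om,\SS_{0}})=E_{0}$ then (H6) says $\nu=1$, i.e.\ the $\SS_{0}$-ground state restricted to the two halves is a genuine translate, which is exactly the obstruction that prevents a test function from being simultaneously low-energy on two adjacent $b$-cells and on the $a$-cells it must transition through. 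Quantifying this "you cannot be low-energy on both sub-populations unless the configuration is constant" into an $\ell^{-2}$ spectral gap is the standard Kirsch--Simon / Klopp argument: a function whose local energies are $\le E_{0}+c\ell^{-2}$ everywhere is $L^{2}$-close to a configuration-independent profile, but the constancy constraint $\nu=1$ then makes the gradient term contribute $\gtrsim \ell^{-2}$ unless $\om$ is literally constant along the relevant direction.

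Granting the box estimate, I would finish exactly as in the proof of Theorem~\ref{thm-upper-bound}: the "bad" (almost-constant) configurations on $\Lambda_{\ell}$ have probability at most $e^{-c'\ell^{d}}$ or, in the cases governed by (H5)--(H6), at most $e^{-c'\ell^{d-1}}$ — and it is precisely the drop from codimension $0$ to codimension $1$, i.e.\ the fact that a violating configuration need only be constant along \emph{one} coordinate line through the box, that changes the combinatorial count and hence the Lifshitz exponent from $\frac{d}{2}$ to $\frac{1}{2}$. Choosing $\ell\sim (E-E_{0})^{-1/2}$ balances $\frac{c}{\ell^{2}}\sim E-E_{0}$ against the probability estimate, yielding $N(E)\le C\,e^{-c''(E-E_{0})^{-1/2}}$ and therefore $\limsup_{E\downarrow E_{0}}\frac{\ln|\ln N(E)|}{\ln(E-E_{0})}\le-\frac{1}{2}$.

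The main obstacle I expect is making the contradiction step fully quantitative: one must show that a near-ground-state of $H_{\om,\Lambda_{\ell}}$ at energy $E_{0}+c\ell^{-2}$ genuinely forces the configuration to be constant (along the distinguished direction) on a macroscopic sub-box, rather than merely "close" to constant in some averaged sense. The delicate point is transferring the marginal degeneracy encoded in (H6) — the equality $\nu=1$ for the doubled cell with two $b$-labels — into a coercive lower bound on the cross-terms in the quadratic form when two such cells are separated by cells bearing the opposite label; this is where one genuinely uses that the transition between the $b$-profile and the $a$-profile costs gradient energy of order $\ell^{-2}$ over a region of length $\ell$, and where the reflection-symmetric special case (in which Mezincescu reduces to Neumann and $\nu=1$ automatically) must be seen to be recovered.
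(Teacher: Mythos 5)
Your overall architecture matches the paper's: reduce to a finite box with Mezincescu boundary conditions, prove a deterministic gap $E_{L}(\om)\geq E_{0}+CL^{-2}$ for all configurations outside a small exceptional set, estimate the probability of that set, and choose $L\sim(E-E_{0})^{-1/2}$. The probabilistic count is also essentially right in spirit, although the bound on the bad event is $L^{d-1}e^{-cL}$ (a union bound over the $L^{d-1}$ columns times an exponential-in-$L$ per-column estimate), not $e^{-c'\ell^{d-1}}$; this does not affect the exponent $-\frac{1}{2}$.

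There is, however, a genuine gap at the step you yourself flag as the main obstacle: the deterministic estimate $\inf\si(H_{\om,\La_{L}})\geq E_{0}+CL^{-2}$ with $C$ independent of $L$ is asserted but not proven, and the mechanism you propose does not work as stated. A compactness/contradiction argument on a finite box cannot produce a constant uniform in $L$. The cell-by-cell bound $\lan\psi,H_{\om,\La_{L}}\psi\ran\geq\sum_{i}E_{\vp_{a}}(\om_{i})\|\psi\|^{2}_{L^{2}(\CC_{i})}$ only yields $\geq E_{0}$, and your claim that the local energy must then be within $O(\ell^{-2})$ of $E_{0}$ on all but $O(\ell^{d-1})$ cells does not follow, since the local energies are weighted by $\|\psi\|^{2}_{L^{2}(\CC_{i})}$ and the mass of $\psi$ may concentrate. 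What the paper actually does (Section \ref{sec-lower-bound-gse} together with Lemmas \ref{lem-estimate-key-1}--\ref{lem-estimate-key-4}) is a quasi-one-dimensional propagation estimate: each column $\SS_{qL}$ is decomposed into subsegments, each consisting of a short defect piece $\Om_{0}$ with $\inf\si(P_{0})>E_{0}$ (supplied by $\rm(H5)$ or by strict concavity of $E_{\vp_{a}}$) glued to a tube $\Om_{M}$ carrying a constant configuration, and Theorems \ref{thm-lower-bound-estimate} and \ref{thm-lower-bound-estimate-1} show that the defect forces a gap $C/M^{2}$ over the whole segment; the proof requires the trace Poincar\'{e} inequality \eqref{estimate-1}, the ground state transform, and the Dirichlet-to-Neumann operator $T_{\la}$ on the interface, none of which appears in your sketch. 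Relatedly, you have mis-assigned the role of $\rm(H6)$: $\nu=1$ is not an ``obstruction'' preventing low-energy test functions on adjacent $b$-cells; it is a non-degeneracy condition ensuring that the ground state $\vp_{M}^{*}$ of the constant-$b$ tube (with Mezincescu conditions defined via $\vp_{a}$) does not grow or decay geometrically, so that the ratio $\inf\vp_{M}^{*}/\sup\vp_{M}^{*}$, which enters quadratically in the Poincar\'{e} estimate after the ground state transform, stays bounded below uniformly in $M$. Finally, passing from extremal configurations to general $\om\in[a,b]^{\Z^{d}\cap\La_{L}}$ uses the concavity of $\om\mapsto E_{L}(\om)$ (Lemma \ref{lem-estimate-key-4}), another step absent from your outline.
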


If $\rm(H5)$ is true, then it is true for any domian $\SS_{0}+i$ for $i\in Z^{d}$ due to the unitary equivalence. Moreover, in $\rm(H5)$, we choose the $d$-th coordinate to make this assumption and it's easy to see this choice does not lose the generality. Also, $\rm(H5)$ is necessary for Bernoulli molels with reflection symmetric potentials to exhibit Lifshitz tails (see \cite{KN10}). In Subsection \ref{subsec-example}, we will use Klopp and Nakamura's example constructed in \cite{KN09} to explain that Lifshitz tails may fail if $\rm(H5)$ fails. For $\rm(H6)$, it is shown in Lemma \ref{lem-key} that if $\inf\si(H_{\om,\SS})=E_{0}$, then there exists $\nu>0$ such that $\vp_{\SS}|_{\CC_{(0,1)}}=\nu\vp_{\SS}|_{\CC_{(0,0)}}(\cdot-(0,1))$. 

The proofs of Theorem \ref{thm-Lifshitz-tail-non-critical} and Theorem \ref{thm-Lifshitz-tail-non} are given in Section \ref{sec-LT-non-optimal-upper-bound}. Their proofs can be treated similarly, except for Lemma \ref{lem-key-2}. This is the reason why we need $\rm(H6)$ for the case $\rm(III)$. Due to technical reasons, we will consider non-Bernoulli models and Bernoulli models separately.

Due to the sign-indefiniteness of $u$, the model \eqref{main-model} is a special non-monotonous model. Non-monotonous models, like models with random magnetic fields (see e.g \cite{Gh07}, \cite{FNNN03}, \cite{Na00-1}, \cite{Na00-2}) and random displacement model (see \cite{KN10,KLNS12-1,KLNS12-2}), have been shown to exhibit Lifshitz tails. In \cite{Gh08}, Lifshitz tails were proven to exist at open band edges for models similar to \eqref{main-model} with further assumptions on the spectrum of the background operator.

The rest of the paper is organized as follows. In Subsection \ref{Mezin-bd}, we collect some results about Schr\"{o}dinger operators restricted to subdomains with Mezincescu Boundary Conditions. In Subsection \ref{sec-spectral-minimum}, we characterize the bottom of the spectrum, that is, we prove Theorem \ref{thm-spectral-minimum}. In Subsection \ref{subsec-IDS}, we present the existence and uniquess of the IDS. Section \ref{sec-lifshitz-tail} is devoted to the proof of Theorem \ref{thm-upper-bound}. In Section \ref{sec-lower-bound-gse}, we prove lower bound estimates for ground state energies of some well constructed operators. These estimates serve as a preparation for the proofs of Theorem \ref{thm-Lifshitz-tail-non-critical} and Theorem \ref{thm-Lifshitz-tail-non}, which are given in Section \ref{sec-LT-non-optimal-upper-bound}. In Section \ref{sec-dis}, we give some further discussions. 

Throughout the paper, we use the following notations: if the spectrum of a lower bounded self-adjoint operator $H$ consists of eigenvalues, we denoted them by $E_{0}(H)\leq E_{1}(H)\leq E_{2}(H)\cdots$; $\lan\cdot,\cdot\ran$ ($\|\cdot\|$) denotes the inner product (norm) on various spaces of square integrable complex functions; $\#\{\cdot\}$ denotes the cardinal number of the set $\{\cdot\}$; and if $O$ is a subdomain in $\R^{d}$, its boundary is denoted by $\pa O$; a self-adjoint operator $H$ on $L^{2}(\R^{d})$ restricted to $L^{2}(O)$ with various Mezincescu boundary conditions are denoted by $H_{O}$; denote by $\N$ the positive natural numbers and set $\N_{0}=\N\cup\{0\}$.


\section{Spectral Minimum and IDS}

In this section, we first review some basic properties of opeators with Mezincescu boundary conditions in Subsection \ref{Mezin-bd}, which are then used to provide characterizations of the bottom of the spectrum in Subsection \ref{sec-spectral-minimum}. Opeators with Mezincescu boundary conditions also provide an alternative way to the definition of the IDS, which is given in Subsection \ref{subsec-IDS}.

\subsection{Operators with Mezincescu Boundary Conditions}\label{Mezin-bd}

We collect some results about Schr\"{o}dinger operators restricted to subdomains with Mezincescu boundary conditions. It is referred to \cite{KW05,Me87} for more discussions.

Let $\La\subset\R^{d}$ be a $d$-dimensional open cube centered at $0$ with integer side length. For $\chi_{\La}:\pa\La\ra\R$ in $L^{\infty}(\pa\La)$, define $\QQ:H^{1}(\La)\times H^{1}(\La)\ra\C$ by
\begin{equation}\label{app-quadratic-form}
\QQ(\phi_{1},\phi_{2})=\int_{\La}\overline{\nabla\phi_{1}}\cdot\nabla\phi_{2}+\int_{\pa\La}\chi_{\La}\overline{\phi_{1}}\phi_{2},\quad \phi_{1},\phi_{2}\in H^{1}(\La).
\end{equation}
It is symmetric, closed and lower bounded. The corresponding self-adjoint operator, denoted by $-\De_{\La}^{\chi_{\La}}$, is the Laplacian with mixed $\chi_{\La}$-boundary conditions on $\pa\La$. Now, consider the periodic operator
\begin{equation*}
H_{\per}=-\De+V_{\per},
\end{equation*}
where $V_{\per}\in L^{p}_{\rm loc}(\R^{d})$ for $p>d$ is $\Z^{d}$-periodic. Then, we can define $H_{{\per},\La}^{\chi_{\La}}$ to be the operator $H_{\per}$ restricted to $L^{2}(\La)$ with mixed $\chi_{\La}$-boundary conditions on $\pa\La$. Moreover, the quadratic form \eqref{app-quadratic-form} corresponds to imposing Robin boundary condition $(\mathbf{n}_{\La}\cdot\nabla+\chi_{\La})\psi|_{\partial\La}=0$ for $\psi$ in the domain of the Laplacian on $L^{2}(\La)$, where $\mathbf{n}_{\La}:\pa\La\ra\R^{d}$ is the unit outer normal on $\pa\La$.

For the real-valued function $\chi_{\La}\in L^{\infty}(\pa\La)$, there's a very special choice. Let $E_{\per}$ be the ground state energy of $H_{\per}$ and $\vp_{\per}$ be the continuously differentiable, strictly positive ground state. Therefore, $\vp_{\per}$ is $\Z^{d}$-periodic, bounded from below by a positive constant and satisfies $H_{\per}\vp_{\per}=E_{\per}\vp_{\per}$. Define
\begin{equation}\label{Mezin-bd-fun}
\chi_{\La}(x)=-\frac{1}{\vp_{\per}(x)}\mathbf{n}_{\La}(x)\cdot\nabla\vp_{\per}(x),\quad x\in\partial\La.
\end{equation}
Since this choice of $\chi_{\La}$ was introduced by Mezincescu (see \cite{Me87}), it is called the Mezincescu boundary condition in his honor.

Main advantages of working with operators with Mezincescu boundary conditions are given by the following two lemmas.

\begin{lem}\label{lem-Mezin}
Let $\chi_{\La}\in L^{\infty}(\pa\La)$ be defined as in \eqref{Mezin-bd-fun} and denote $\chi_{\La}$ by $\chi_{L}$ if $\La=\La_{L}$ for $L\in\N$. Then, 
\begin{itemize}
\item[\rm(i)] $E_{\per}$ continuous to be the ground state energy of $H^{\chi_{\La}}_{{\per},\La}$;

\item[\rm(ii)] let $\vp=\vp_{\per}|_{\La}$. Then, $\vp$ is the strictly positive ground state of $H^{\chi_{\La}}_{{\per},\La}$, and hence, satisfies $H^{\chi_{\La}}_{{\per},\La}\vp=E_{\per}\vp$;
\end{itemize}
\end{lem}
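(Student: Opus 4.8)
The plan is to verify directly that the periodic ground state $\vp_{\per}$, restricted to $\La$, lies in the form domain of the Mezincescu operator and is an eigenfunction with eigenvalue $E_{\per}$, and that no smaller eigenvalue can exist. The key algebraic identity is the following consequence of the product rule: for any $\psi\in C^{2}$ one has, on $\R^{d}$,
\begin{equation*}
\vp_{\per}^{2}\,\nabla\!\left(\frac{\psi}{\vp_{\per}}\right) = \vp_{\per}\nabla\psi - \psi\nabla\vp_{\per},
\end{equation*}
so that, after multiplying $H_{\per}\vp_{\per}=E_{\per}\vp_{\per}$ by $\ol{\psi}/\vp_{\per}$ (which is legitimate since $\vp_{\per}$ is bounded below by a positive constant) and integrating by parts over $\La$, the boundary term produced by the Laplacian is exactly $-\int_{\pa\La}\frac{1}{\vp_{\per}}(\mathbf n_{\La}\cdot\nabla\vp_{\per})\ol{\psi}\vp_{\per}=\int_{\pa\La}\chi_{\La}\ol{\psi}\vp_{\per}$, which is precisely the boundary term appearing in the form $\QQ$ associated with $H^{\chi_{\La}}_{\per,\La}$. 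Tracking constants, this shows $\QQ(\psi,\vp_{\per}) = E_{\per}\langle\psi,\vp_{\per}\rangle$ for all $\psi$ in a form core, hence $\vp_{\per}|_{\La}$ is an eigenfunction of $H^{\chi_{\La}}_{\per,\La}$ with eigenvalue $E_{\per}$; since $\vp_{\per}>0$ it cannot be orthogonal to the (positivity-improving) ground state of $H^{\chi_{\La}}_{\per,\La}$, so $E_{\per}$ is in fact the ground state energy. This simultaneously gives (i) and (ii).

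First I would record the regularity inputs: under $\rm(H1)$ (or the relaxed $p>\tfrac d2$ condition mentioned in the text) the periodic ground state $\vp_{\per}$ is in $C^{1}(\R^{d})$, strictly positive, $\Z^{d}$-periodic, hence bounded above and bounded below by a positive constant on the compact closure of $\La$; in particular $\vp_{\per}|_{\La}\in H^{1}(\La)$ and the trace $\vp_{\per}|_{\pa\La}$ makes sense, as does $\chi_{\La}\in L^{\infty}(\pa\La)$. Then I would establish the $L^{2}(\La)$-lower bound $H^{\chi_{\La}}_{\per,\La}\ge E_{\per}$: the clean way is the ground-state transformation $\psi\mapsto \vp_{\per}\psi$, which maps $H^{1}(\La)$ onto itself (again using the two-sided bounds on $\vp_{\per}$) and, by the computation above applied with general $\psi\in H^{1}$, converts the quadratic form of $H^{\chi_{\La}}_{\per,\La}$ into
\begin{equation*}
\QQ(\vp_{\per}\psi,\vp_{\per}\psi) = E_{\per}\int_{\La}\vp_{\per}^{2}|\psi|^{2} + \int_{\La}\vp_{\per}^{2}|\nabla\psi|^{2},
\end{equation*}
which is manifestly $\ge E_{\per}\|\vp_{\per}\psi\|^{2}$, with equality iff $\psi$ is constant, i.e. iff the original function is a multiple of $\vp_{\per}|_{\La}$. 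This gives both that $E_{\per}=\inf\si(H^{\chi_{\La}}_{\per,\La})$ and that the infimum is attained precisely at $\vp_{\per}|_{\La}$, so the ground state is simple and, being positive, it is \emph{the} ground state.

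The main obstacle is purely technical rather than conceptual: justifying the integration by parts / the surjectivity of the ground-state transformation on the form domain when $V_{\per}$ is only in $L^{p}_{\rm loc}$ with $p>d$, so that $\vp_{\per}$ is only $C^{1}$ and not $C^{2}$. I would handle this by a density argument — verify the key identity first for $\psi$ in a dense core (say $C^{\infty}(\ol\La)$ or restrictions of $\Z^{d}$-periodic smooth functions, using that $\vp_{\per}$ solves the eigenvalue equation weakly and that $-\De\vp_{\per}=(E_{\per}-V_{\per})\vp_{\per}\in L^{p}_{\rm loc}$ gives enough local $W^{2,p}\hookrightarrow C^{1}$ regularity for the boundary traces to behave) — and then pass to the limit using the form-boundedness of $V_{\per}$ relative to $-\De_{\La}$ together with continuity of the trace map $H^{1}(\La)\to L^{2}(\pa\La)$. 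All of these are standard facts about Schr\"odinger forms on bounded Lipschitz domains and are exactly the content of the references \cite{KW05,Me87} cited in the text, so in the write-up I would state the identity, indicate the ground-state transformation, and defer the approximation details to those references.
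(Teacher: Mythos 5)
Your proof is correct. The paper itself offers no proof of this lemma --- it simply collects it from the cited references \cite{KW05,Me87} --- and your argument (the eigenfunction identity $\QQ(\psi,\vp_{\per})=E_{\per}\lan\psi,\vp_{\per}\ran$ obtained from the boundary term matching $\chi_{\La}$, plus the ground-state transformation $\psi\mapsto\vp_{\per}\psi$ to get the lower bound $H^{\chi_{\La}}_{\per,\La}\geq E_{\per}$ with equality exactly on multiples of $\vp_{\per}|_{\La}$) is precisely the standard Mezincescu argument those references contain, so it fills the gap faithfully. One cosmetic slip: in the first display you say you multiply the eigenvalue equation by $\ol{\psi}/\vp_{\per}$, but the boundary term you then write down, $-\int_{\pa\La}\frac{1}{\vp_{\per}}(\mathbf n_{\La}\cdot\nabla\vp_{\per})\ol{\psi}\,\vp_{\per}$, is the one produced by multiplying by $\ol{\psi}$ alone; the identity you conclude is nevertheless the correct one, so this is a matter of wording, not of substance.
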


Lemma \ref{lem-Mezin}$\rm(i)$ shows that the ground state energy of a periodic operator is inherited by its localized operators with Mezincescu boundary conditions defined via its ground state. This property is crucial here and it is not shared by Neumann operators unless in some special case, say, $V_{\per}$ is reflection symmetric (see Remark \ref{rem-1} for more details).

Mezincescu boundary condition also introduces the bracketing as Neumann bounday condition does. More precisely, for $i\in\Z^{d}$, let $\CC_{i}=i+\CC_{0}$ and denote by $\mathbf{n}_{i}:\partial\CC_{i}\ra\R^{d}$ the unit outer normal on $\partial\CC_{i}$, and define $\chi_{i}:\partial\CC_{i}\ra\R$ by setting
\begin{equation}\label{Mezin-bd-fun-i}
\chi_{i}(x)=-\frac{1}{\vp_{\per}(x)}\textbf{n}_{i}(x)\cdot\nabla\vp_{\per}(x),\quad x\in\partial\CC_{i}.
\end{equation}
We have

\begin{lem}\label{app-lem-bracketing}
Let $\chi_{\La}\in L^{\infty}(\pa\La)$ be defined as in \eqref{Mezin-bd-fun} and $\chi_{i}\in L^{\infty}(\pa\La)$ be defined as in \eqref{Mezin-bd-fun-i} for $i\in\Z^{d}$. Suppse that the side length of $\La$ is odd. Then
\begin{itemize}
\item[\rm(i)] $\chi_{i}=\chi_{\La}$ on $\pa\CC_{i}\cap\pa\La$ if $\pa\CC_{i}\cap\pa\La\neq\emptyset$;

\item[\rm(ii)] $\chi_{i_{1}}+\chi_{i_{2}}=0$ on $\pa\CC_{i_{1}}\cap\pa\CC_{i_{2}}$ for any $i_{1},i_{2}\in\Z^{d}$ with $\CC_{i_{1}}$ and $\CC_{i_{2}}$ being adjacent;

\item[\rm(iii)] there holds
\begin{equation*}
\lan\phi,-\De_{\La}^{\chi_{\La}}\phi\ran=\sum_{i\in\Z^{d}\cap\La}\lan\phi|_{\CC_{i}},-\De_{\CC_{i}}^{\chi_{i}}\phi|_{\CC_{i}}\ran,\quad\forall \phi\in H^{1}(\La).
\end{equation*}
In particular, the bracketing
\begin{equation*}
-\De_{\La}^{\chi_{\La}}\geq\bigoplus_{i\in\Z^{d}\cap\La}(-\De_{\CC_{i}}^{\chi_{i}})
\end{equation*}
is true in the sense of quadratic forms.
\end{itemize}
\end{lem}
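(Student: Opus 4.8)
The statement to prove is Lemma \ref{app-lem-bracketing}, which collects three facts about Mezincescu boundary conditions on a cube $\Lambda$ of odd side length, cut into unit cells $\CC_i$.

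The plan is as follows. For part (i), recall that $\chi_i$ and $\chi_\Lambda$ are both built from the \emph{same} periodic ground state $\vp_{\per}$ via formula \eqref{Mezin-bd-fun}--\eqref{Mezin-bd-fun-i}; the only difference is the normal vector appearing in the definition. On a common boundary piece $\pa\CC_i\cap\pa\Lambda$, the outer normal $\mathbf{n}_i$ of the cell $\CC_i$ and the outer normal $\mathbf{n}_\Lambda$ of the big cube $\Lambda$ point in the same direction (the cell sits against the wall of $\Lambda$ on that face), so $\mathbf{n}_i(x)=\mathbf{n}_\Lambda(x)$ there and hence $\chi_i(x)=\chi_\Lambda(x)$. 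The oddness of the side length is what guarantees that the cubic lattice of unit cells $\{\CC_i\}$ tiles $\Lambda$ exactly (since $\Lambda$ is centered at $0$ with odd integer side length, $\Lambda=\bigcup_{i\in\Z^d\cap\Lambda}\ol{\CC_i}$ up to a null set), so that every boundary face of $\Lambda$ is a union of full faces of cells.

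For part (ii), if $\CC_{i_1}$ and $\CC_{i_2}$ are adjacent, their shared face lies in a hyperplane, and the outer normal of $\CC_{i_1}$ on that face is exactly the negative of the outer normal of $\CC_{i_2}$: $\mathbf{n}_{i_1}(x)=-\mathbf{n}_{i_2}(x)$ for $x$ in the shared face. Plugging into \eqref{Mezin-bd-fun-i} and using again that both $\chi_{i_1}$ and $\chi_{i_2}$ are defined through the same $\vp_{\per}$ gives $\chi_{i_1}(x)=-\chi_{i_2}(x)$, i.e. $\chi_{i_1}+\chi_{i_2}=0$ on the interface. For part (iii), start from the quadratic form \eqref{app-quadratic-form} for $-\De_\Lambda^{\chi_\Lambda}$ and split the domain integral $\int_\Lambda\ol{\nabla\phi}\cdot\nabla\phi=\sum_i\int_{\CC_i}\ol{\nabla\phi}\cdot\nabla\phi$ using the tiling. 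For the boundary term $\int_{\pa\Lambda}\chi_\Lambda|\phi|^2$: decompose $\pa\Lambda$ into pieces lying on faces of cells; by (i) each such piece contributes exactly $\int_{\pa\CC_i\cap\pa\Lambda}\chi_i|\phi|^2$. The remaining cell-boundary faces are \emph{interior} faces of $\Lambda$, each shared by two adjacent cells $\CC_{i_1},\CC_{i_2}$; summing the two cell contributions over such an interface gives $\int(\chi_{i_1}+\chi_{i_2})|\phi|^2=0$ by (ii) — note $\phi\in H^1(\Lambda)$ has a single well-defined trace on interior faces, which is why these cancellations are legitimate. Hence $\sum_i\big(\int_{\CC_i}\ol{\nabla\phi}\cdot\nabla\phi+\int_{\pa\CC_i}\chi_i|\phi|^2\big)$ telescopes to $\QQ(\phi,\phi)=\lan\phi,-\De_\Lambda^{\chi_\Lambda}\phi\ran$, where each summand is $\lan\phi|_{\CC_i},-\De_{\CC_i}^{\chi_i}\phi|_{\CC_i}\ran$. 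The form-inequality $-\De_\Lambda^{\chi_\Lambda}\geq\bigoplus_i(-\De_{\CC_i}^{\chi_i})$ then follows because the form domain $H^1(\Lambda)$ embeds in $\bigoplus_i H^1(\CC_i)$ with equal form values, while the direct sum's form domain is the larger space $\bigoplus_i H^1(\CC_i)$ (no matching of traces across interfaces), so restricting to the smaller domain with identical quadratic form gives the claimed inequality in the sense of forms.

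The only slightly delicate point — hardly an obstacle — is bookkeeping the decomposition of $\pa\Lambda$ and the interior interfaces, and making sure the trace of $\phi\in H^1(\Lambda)$ is used consistently on both sides of every interior face; the oddness hypothesis is used precisely to ensure the unit-cell grid is aligned with $\pa\Lambda$ so that this decomposition is clean. Everything else is a direct unwinding of the definitions \eqref{Mezin-bd-fun}, \eqref{Mezin-bd-fun-i}, and \eqref{app-quadratic-form}, together with the elementary geometry of how outer normals of adjacent cubes relate.
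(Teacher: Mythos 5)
Your proof is correct: parts (i) and (ii) follow exactly as you say from the fact that all the $\chi$'s are built from the one periodic function $\vp_{\per}$ and only the outer normals differ (equal on $\pa\CC_{i}\cap\pa\La$, opposite on shared interior faces), and part (iii) is the resulting telescoping of the quadratic form \eqref{app-quadratic-form}, with the single-valued $H^{1}$ trace justifying the interior cancellations and the domain inclusion $H^{1}(\La)\subset\bigoplus_{i}H^{1}(\CC_{i})$ (with equal form values) giving the bracketing inequality. The paper itself gives no proof of this lemma, deferring to \cite{KW05,Me87}; your argument is precisely the standard one those references (and the analogous Neumann bracketing) rely on, including the correct use of the odd side length to align the unit-cell grid with $\pa\La$.
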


\begin{rem}\label{rem-bracketing}
For Lemma \ref{app-lem-bracketing}, it is not necessary to use $\vp_{\per}$ in \eqref{Mezin-bd-fun} and \eqref{Mezin-bd-fun-i}. In fact, we can use any function defined on $\R^{d}$ that is real-valued, continuously differentiable, strictly positive and $\Z^{d}$-periodic.
\end{rem}

\begin{rem}
For notational simplicity, we will use suppressed notations for operators with Mezincescu boundary conditions in the sequel. More precisely, using the real-valued, continuously differentiable, strictly positive and $\Z^{d}$-periodic function $\vp$ to define the Mezincescu boundary condition on $\partial\La$, we need the function
\begin{equation*}
\chi_{\La}(x)=-\frac{1}{\vp(x)}\mathbf{n}_{\La}(x)\cdot\nabla\vp(x),\quad x\in\partial\La.
\end{equation*}
If $H$ is a self-adjoint operator on $L^{2}(\R^{d})$, we then denote by $H_{\La}^{\chi_{\La}}$ the operator $H$ restricted to $\La$ with Mezincescu boundary condition defined via $\vp$. Since $\La$ and $\vp$ determine $\chi_{\La}$, $H_{\La}^{\vp}$ may be a better notation. For simplicity, we will use $H_{\La}$ instead of $H_{\La}^{\chi_{\La}}$ or $H_{\La}^{\vp}$.
\end{rem}


\subsection{Determining the Bottom of the Spectrum}\label{sec-spectral-minimum}

This subsection is devoted to the characterization of $E_{0}$, that is, we will prove Theorem \ref{thm-spectral-minimum}. Recall that
\begin{equation}\label{operator-single}
H_{t}=-\De+V_{0}+t\sum_{i\in\Z^{d}}u(\cdot-i),\quad t\in[a,b].
\end{equation}
Let $\vp\in C^{1}(\R^{d})$ be real-valued, strictly positive and $\Z^{d}$-periodic. Denote by $H_{t,\CC_{0}}$ the restriction of $H_{t}$ to $L^{2}(\CC_{0})$ with Mezincescu boundary condition defined via $\vp$ on $\partial\CC_{0}$ and by $E_{\vp}(t)$ the ground state energy of $H_{t,\CC_{0}}$.

To prove Theorem \ref{thm-spectral-minimum}, we first establish some lemmas.

\begin{lem}\label{lem-ground-state}
Suppose $\rm (H1)$, $\rm(H2)$ and $\rm(H3)$. Let $\vp\in C^{1}(\R^{d})$ be real-valued, strictly positive and $\Z^{d}$-periodic. Then,
\begin{itemize}
\item[\rm(i)] $E_{\vp}(\cdot)$ is real analytic and strictly concave on $[a,b]$;

\item[\rm(ii)] the bottom of $\Si$, i.e., $E_{0}$, satisfies $E_{0}\geq\min\{E_{\vp}(a),E_{\vp}(b)\}$.
\end{itemize}
\end{lem}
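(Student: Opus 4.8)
\textbf{Proof plan for Lemma \ref{lem-ground-state}.}

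The plan is to treat the two parts separately, using the variational (quadratic form) characterization of $E_{\vp}(t)$ throughout. For part (i), I would first write down the quadratic form associated with $H_{t,\CC_0}$ on $H^1(\CC_0)$: by \eqref{app-quadratic-form} with $\chi_0$ built from $\vp$, it is $\QQ_t(\phi,\phi)=\int_{\CC_0}|\nabla\phi|^2+\int_{\CC_0}(V_0+t\sum_i u(\cdot-i))|\phi|^2+\int_{\pa\CC_0}\chi_0|\phi|^2$, which is \emph{affine} (linear plus constant) in the parameter $t$ because only the potential term carries $t$ and $u$ has support in $\CC_0$ so $\sum_i u(\cdot-i)$ restricted to $\CC_0$ is just $u$. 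Since $E_{\vp}(t)=\inf_{\|\phi\|=1}\QQ_t(\phi,\phi)$ is an infimum of affine functions of $t$, it is concave on $[a,b]$; moreover, since the form is relatively bounded with relative bound zero with respect to $-\De_{\CC_0}^{\chi_0}$ (using $u,V_0\in L^p$, $p>d$, and Sobolev embedding on the bounded domain $\CC_0$), $\{H_{t,\CC_0}\}$ is a real-analytic type (A) family in Kato's sense, the ground state $E_{\vp}(t)$ is a simple isolated eigenvalue (domain monotonicity / positivity of the ground state on a connected domain), hence it depends real-analytically on $t$. Strict concavity then follows from the fact that the ground state eigenfunctions $\psi_t$ cannot be independent of $t$: if $E_{\vp}$ were affine on a subinterval, the Feynman–Hellmann formula $E_{\vp}'(t)=\int_{\CC_0} u|\psi_t|^2$ would be constant, forcing $E_{\vp}''(t)=0$; but second-order perturbation theory gives $E_{\vp}''(t)=-2\sum_{k\ge 1}\frac{|\langle\psi_t,u\,\psi_t^{(k)}\rangle|^2}{E_k(t)-E_{\vp}(t)}<0$ unless $u\psi_t$ is proportional to $\psi_t$, i.e. $u$ is a.e. constant on the (full-measure) set where $\psi_t\ne 0$, which contradicts (H2) that both $u_+$ and $u_-$ are non-trivial.

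For part (ii), the idea is a Mezincescu bracketing argument combined with the key inheritance property Lemma \ref{lem-Mezin}(i). Fix a realization $\om$ with $\om_i\in[a,b]$ for all $i$. Choose an odd cube $\La_L$, and use Dirichlet–Mezincescu bracketing: $\inf\si(H_{\om,\La_L}^{D})\ge\inf\si\big(\bigoplus_{i\in\Z^d\cap\La_L}H_{\om,\CC_i}\big)$ where the right side uses Mezincescu boundary conditions built from $\vp$ (Lemma \ref{app-lem-bracketing}(iii) applied with the potential terms added to $-\De$, which is legitimate since potentials act as multiplication and the bracketing identity for the form just carries them along). On each cell $\CC_i$ the operator $H_{\om,\CC_i}=H_{\om_i,\CC_0}$ (up to the unitary translation by $i$), so its ground state energy is $E_{\vp}(\om_i)\ge\min\{E_{\vp}(a),E_{\vp}(b)\}$ by part (i): a concave function on $[a,b]$ attains its minimum at an endpoint. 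Hence $\inf\si(H_{\om,\La_L}^{D})\ge\min\{E_{\vp}(a),E_{\vp}(b)\}$ for every odd $L$, and letting $L\to\infty$ (Dirichlet eigenvalues decrease to $\inf\si(H_\om)=E_0$ as the domain exhausts $\R^d$, by e.g. the standard approximation of $\R^d$ operators by Dirichlet restrictions) gives $E_0\ge\min\{E_{\vp}(a),E_{\vp}(b)\}$.

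I expect the main obstacle to be the bracketing step in (ii): one must justify that Lemma \ref{app-lem-bracketing}(iii), stated there for $-\De$, extends to $H_t=-\De+V_0+t\sum_i u(\cdot-i)$, and one must compare the Mezincescu-restricted direct sum with a genuine lower bound for $\inf\si(H_\om)$ — the cleanest route is to bound $\inf\si(H_{\om,\La_L})\ge\bigoplus_i E_{\vp}(\om_i)$ directly in the form sense on $\La_L$ (no Dirichlet operator needed) and then invoke the standard fact that, along odd $L\to\infty$, $\inf\si(H_{\om,\La_L})\to E_0$ for a.e. $\om$; the potential terms cause no trouble here because $V_0\in L^p_{\rm loc}$ and $u\in L^p$ are form-bounded with relative bound $0$ and act as pure multiplication operators, so they are unaffected by the decomposition of $L^2(\La_L)$ into $\bigoplus_i L^2(\CC_i)$. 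A secondary technical point in (i) is verifying strict concavity rigorously via the non-degeneracy condition on $u$; this is where assumption (H2) (both $u_\pm$ non-trivial) is used essentially.
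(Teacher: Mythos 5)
Your proposal is correct and follows essentially the same route as the paper: concavity of $E_{\vp}$ as an infimum of $t$-affine forms, real analyticity from Kato perturbation theory, strict concavity via the second-order perturbation formula combined with (H2), and Mezincescu bracketing over unit cells plus unitary equivalence and concavity for the lower bound on $E_{0}$. The only (harmless) difference is that you pass through finite boxes $\La_{L}$ and a limit $L\to\infty$, whereas the paper applies the cell bracketing of Lemma \ref{app-lem-bracketing} (with Remark \ref{rem-bracketing}) directly to $H_{\om}$ on $\R^{d}$, the boundary terms of adjacent cells cancelling.
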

\begin{proof}
$\rm(i)$ The real analyticity follows from analytic perturbation theory (see e.g. \cite{RS78}). For $t\in[a,b]$, define the functional $E_{\vp}(\cdot,t):H^{1}(\CC_{0})\ra\R$ by
\begin{equation*}
E_{\vp}(\phi,t)=\|\nabla\phi\|^{2}+\int_{\pa\CC_{0}}\chi_{0}|\phi|^{2}+\int_{\CC_{0}}V_{0}|\phi|^{2}+t\int_{\CC_{0}}u|\phi|^{2},\quad\phi\in H^{1}(\CC_{0}),
\end{equation*}
and then, $E_{\vp}(t)=\inf_{\phi\in H^{1}(\CC_{0}),\|\phi\|=1}E_{\vp}(\phi,t)$. The concavity then follows directly from the fact that $E_{\vp}$ is the infimum of an affine function.

For the strict concavity, we use the following identity
\begin{equation}\label{identity-1}
E_{\vp}''(t)=-2\sum_{n=1}^{\infty}\frac{\lan u\vp_{0}(H_{t,\CC_{0}}),\vp_{n}(H_{t,\CC_{0}})\ran^{2}}{E_{n}(H_{t,\CC_{0}})-E_{0}(H_{t,\CC_{0}})}=-2\sum_{n=1}^{\infty}\frac{\lan u\vp_{0}(H_{t,\CC_{0}}),\vp_{n}(H_{t,\CC_{0}})\ran^{2}}{E_{n}(H_{t,\CC_{0}})-E_{\vp}(t)},
\end{equation}
where $\vp_{n}(H_{t,\CC_{0}})$, $n\in\N_{0}$ are real normalized eigenfunctions corresponding to $E_{n}(H_{t,\CC_{0}})$, $n\in\N_{0}$. \eqref{identity-1} is proven in \cite[Eq.(11)]{BLS08} for Neumann operators and the proof there is applied in our situation. Using \eqref{identity-1}, we conclude from the simplicity of the ground state energy that $E_{\vp}''(t)<0$ for all $t\in[a,b]$ unless $u$ is a constant function, which, however, is excluded by our assumption. Hence, $E_{\vp}(\cdot)$ is strictly concave on $[a,b]$.

$\rm(ii)$ Recall $\CC_{i}=i+\CC_{0}$ for $i\in\Z^{d}$. By Lemma \ref{app-lem-bracketing} and Remark \ref{rem-bracketing}, we have the bracketing $H_{\om}\geq\bigoplus_{i\in\Z^{d}}H_{\om,\CC_{i}}$. Due to the $\Z^{d}$-periodicity of $V_{0}$ and $\vp$, the operator $H_{\om,\CC_{i}}$ is unitarily equivalent to $H_{\om_{i},\CC_{0}}$ for every $i\in\Z^{d}$, which leads to
\begin{equation}\label{bracketing}
E_{0}\geq\inf_{t\in\supp(\P_{0})}E_{\vp}(t).
\end{equation}
The result then follows from \eqref{bracketing} and $\rm(i)$.
\end{proof}

\begin{lem}\label{lem-ground-state-upper-bound}
Suppose $\rm (H1)$, $\rm(H2)$ and $\rm(H3)$. Let $t\in\supp(\P_{0})$. Then
\begin{equation*}
\min\big\{E_{\vp_{t}}(a),E_{\vp_{t}}(b)\big\}\leq E_{0}\leq E_{\vp_{t}}(t),
\end{equation*}
where $\vp_{t}$ is the continuously differentiable, strictly positive and $\Z^{d}$-periodic ground state of $H_{t}$.
\end{lem}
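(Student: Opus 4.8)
The plan is to prove the two inequalities separately. For the lower bound $\min\{E_{\vp_t}(a),E_{\vp_t}(b)\}\leq E_0$, I would observe that $\vp_t$, being the ground state of the $\Z^d$-periodic operator $H_t$ (with $p>d$, hence continuously differentiable and strictly positive by the regularity quoted in the Introduction, and $\Z^d$-periodic), is an admissible weight for Lemma \ref{lem-ground-state}; part (ii) of that lemma, applied with $\vp=\vp_t$, gives exactly the desired inequality.

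The substance is in the upper bound $E_0\leq E_{\vp_t}(t)$. The first step is to recognize $E_{\vp_t}(t)$ as the bottom of a full-space spectrum. Setting $V_{\rm per}=V_0+t\sum_{i\in\Z^d}u(\cdot-i)$, the operator $H_t=-\De+V_{\rm per}$ is $\Z^d$-periodic with ground state $\vp_t$, and the Mezincescu boundary condition on $\pa\CC_0$ used to define $H_{t,\CC_0}$ is precisely the one associated with $\vp_t$. Thus Lemma \ref{lem-Mezin}(i) applies and yields $E_{\vp_t}(t)=\inf\si(H_t)$. Since $H_t$ coincides with $H_\om$ at the constant configuration $\om^{(t)}=(t)_{i\in\Z^d}\in\Om$, this reads $E_{\vp_t}(t)=\inf\si(H_{\om^{(t)}})$.

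The second step is a semicontinuity argument comparing $\inf\si(H_{\om^{(t)}})$ with $E_0=\inf\Si$. I would consider $\la(\om):=\inf\si(H_\om)$ on $\Om=(\supp\P_0)^{\Z^d}$. Since each $H_\om$ is self-adjoint and bounded below with form domain $H^1(\R^d)$ (so that $C_c^\infty(\R^d)$ is a form core), one has $\la(\om)=\inf\{\lan\phi,H_\om\phi\ran:\phi\in C_c^\infty(\R^d),\,\|\phi\|=1\}$; and for each fixed $\phi$ the map $\om\mapsto\lan\phi,H_\om\phi\ran=\lan\phi,(-\De+V_0)\phi\ran+\sum_i\om_i\int u(\cdot-i)|\phi|^2$ depends only on the finitely many $i$ with $\CC_i\cap\supp\phi\neq\emptyset$ and is affine, hence continuous, in $\om$ for the product topology. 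Therefore $\la$ is upper semicontinuous. Because $\Om$ is the topological support of the product measure $\P$, every nonempty open subset of $\Om$ has positive $\P$-measure, so every neighbourhood of $\om^{(t)}$ meets the full-measure set $\{\om:\la(\om)=E_0\}$ on which $\si(H_\om)=\Si$. Choosing $\om^{(n)}\to\om^{(t)}$ in this set, upper semicontinuity gives $E_0=\limsup_n\la(\om^{(n)})\leq\la(\om^{(t)})=E_{\vp_t}(t)$, which is the claim.

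The only delicate point is this last step, namely that the a.s.\ spectrum $\Si$ ``sees'' the constant admissible configuration $\om^{(t)}$; I presented it via upper semicontinuity together with the support of $\P$, but an equally valid shortcut is to invoke the standard fact that for an alloy-type ergodic operator $\si(H_q)\subseteq\Si$ for every $\Z^d$-periodic admissible configuration $q$ (cf.\ \cite{CL90,KM07}), apply it to $q\equiv t$, and combine with $E_{\vp_t}(t)=\inf\si(H_t)$ from Lemma \ref{lem-Mezin}(i). The remaining verifications (infinitesimal form-boundedness of $V_0$ and $u$ relative to $-\De$, so that the form domain is $H^1(\R^d)$, and finiteness of the integrals involved) are routine under $p>d$.
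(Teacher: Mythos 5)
Your proposal is correct and follows essentially the same route as the paper: the lower bound is Lemma \ref{lem-ground-state}(ii) applied with $\vp=\vp_{t}$, and the upper bound combines $E_{\vp_{t}}(t)=\inf\si(H_{t})$ (Lemma \ref{lem-Mezin}(i)) with $\inf\si(H_{t})\geq E_{0}$. The paper handles this last inclusion by simply citing periodic approximation ($\si(H_{t})\subset\Si$), which is exactly the shortcut you mention at the end; your upper-semicontinuity argument is a correct, self-contained substitute for that citation.
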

\begin{proof}
By Lemma \ref{lem-ground-state}, it suffices to prove the second inequality. By periodic approximation, we have $\si(H_{t})\subset\Si$, and hence, $E_{0}\leq\inf\si(H_{t})$. Since $\vp_{t}$ is the continuously differentiable, strictly positive ground state of $H_{t}$, we conclude from Lemma \ref{lem-Mezin} that $\inf\si(H_{t})$ is also the ground state energy of $H_{t,\CC_{0}}$. Thus, we have $E_{0}\leq\inf\si(H_{t})=E_{\vp_{t}}(t)$.
\end{proof}

Theorem \ref{thm-spectral-minimum} now follows directly from Lemma \ref{lem-ground-state-upper-bound}.

\begin{proof}[Proof of Theorem \ref{thm-spectral-minimum}]
Setting $t=a$ and $t=b$, respectively, in Lemma \ref{lem-ground-state-upper-bound}, we find $\rm(i)$ and $\rm(ii)$. $\rm(iii)$ is a consequence of Lemma \ref{lem-ground-state-upper-bound} and the continuity of $E_{\vp_{a}}(\cdot)$ and $E_{\vp_{b}}(\cdot)$ by Lemma \ref{lem-ground-state}$\rm(i)$.
\end{proof}

\begin{rem}\label{rem-1}
\begin{itemize}
\item[\rm(i)]
The characterization of the bottom of the spectrum for alloy type models with sign-indefinite single-site potentials was first studied in \cite{Na06} by Najar for sufficiently small $a$ and $b$ with an additional assumption on the sign of $\int_{\R^{d}}u(x)dx$. Later, Klopp and Nakamura proved in \cite{KN09} the same result with a reflection symmetry assumption \eqref{reflection-symmetric} on $u$ as mentioned before. Also, there are corresponding results for random displacement models (see e.g. \cite{BLS08,BLS09}).

\item[\rm(ii)] Reviewing the proof of Theorem \ref{thm-spectral-minimum}, it's easy to see that the arguments are completely based on Lemma \ref{lem-Mezin} $\rm(i)$ and Lemma \ref{app-lem-bracketing} $\rm(iii)$, which actually generalize corresponding results in the case of Neumann operators with $V_{\per}$ being reflection symmetric. In fact, it is well-known (see e.g. \cite{RS78}) that Neumann operators enjoy the bracketing as in Lemma \ref{app-lem-bracketing} $\rm(iii)$. Moreover, we claim that $\inf\si(H_{\per})=\inf\si(H_{{\per},\CC_{0}}^{N})$. Indeed, $\inf\si(H_{\per})=\inf\si(H_{{\per},\CC_{0}}^{P})$ by Floquet theory. Since $H_{{\per},\CC_{0}}^{N}\leq H_{{\per},\CC_{0}}^{P}$, $\inf\si(H_{{\per},\CC_{0}}^{N})\leq \inf\si(H_{{\per},\CC_{0}}^{P})$. But the reflection symmetry of $V_{\per}$ yields that the ground state corresponding to $\inf\si(H_{{\per},\CC_{0}}^{N})$ satisfies periodic boundary condition, and hence, $\inf\si(H_{{\per},\CC_{0}}^{N})\in\si(H_{{\per},\CC_{0}}^{P})$, which leads to $\inf\si(H_{{\per},\CC_{0}}^{N})\geq \inf\si(H_{{\per},\CC_{0}}^{P})$. Thus, $\inf\si(H_{{\per},\CC_{0}}^{N})=\inf\si(H_{{\per},\CC_{0}}^{P})=\inf\si(H_{\per})$.
\end{itemize}
\end{rem}

\subsection{The Integrated Density of States}\label{subsec-IDS}

For $E\in\R$, define the eigenvalue counting function
\begin{equation*}
N\big(H_{\om,\La_{L}}^{X},E\big)=\#\big\{n\in\N_{0}\big|E_{n}(H_{\om,\La_{L}}^{X})\leq E\big\},
\end{equation*}
where $X=D$ and $N$ refer to Dirichlet and Neumann boundary conditions, respectively. Under the assumptions $\rm(H1)$, $\rm(H2)$ and $\rm(H3)$, for $E\in\R$, the limit
\begin{equation*}
N^{X}(E):=\lim_{L\rightarrow\infty}\frac{N(H_{\om,\La_{L}}^{X},E)}{L^{d}}
\end{equation*}
exists and a.e. deterministic. Moreover, $N^{D}(E)=N^{N}(E)$ for all but possible countably many $E\in\R$. Their common value is called the integrated density of states, denoted by $N(E)$, $E\in\R$, of $H_{\om}$. See \cite{Ve08} for the proof. Also, for $E\in\R$,
\begin{equation}\label{IDS-expectation-X}
N(E)=\sup_{L\in\N}\frac{\E\{N(H_{\cdot,\La_{L}}^{D},E)\}}{L^{d}}=\inf_{L\in\N}\frac{\E\{N(H_{\cdot,\La_{L}}^{N},E)\}}{L^{d}}
\end{equation}
Our objective is to investigate the asymptotic behavior of $N(E)$ near $E_{0}$. More precisely, we wish \begin{equation*}
\lim_{E\da E_{0}}\frac{\ln|\ln N(E)|}{\ln(E-E_{0})}=-\frac{d}{2}.
\end{equation*}

To prove Theorem \ref{thm-upper-bound}, Theorem \ref{thm-Lifshitz-tail-non-critical} and Theorem \ref{thm-Lifshitz-tail-non}, we need the IDS to be defined via eigenvalue counting functions of operators with Mezincescu boundary conditions. More precisely, if we use $\vp$, a continuously differentiable, real-valued, strictly positive and $\Z^{d}$-periodic function, to define Mezincescu boundary conditions,  then under the additional assumption $\rm(H4)$, we have
\begin{equation*}
N(E)=\lim_{L\ra\infty}\frac{N(H_{\om,\La_{L}},E)}{L^{d}},\quad E\in\R,
\end{equation*}
where $N(H_{\om,\La_{L}},\cdot)$ is the eigenvalue value counting function of $H_{\om,\La_{L}}$. Due to Lemma \ref{app-lem-bracketing} and Remark \ref{rem-bracketing}, we also have
\begin{equation}\label{IDS-expectation}
N(E)=\inf_{L\in\N}\frac{\E\{N(H_{\cdot,\La_{L}},E)\}}{L^{d}},\quad E\in\R.
\end{equation}


\section{Lifshitz Tails: Optimal Upper Bound}\label{sec-lifshitz-tail}

We prove Theorem \ref{thm-upper-bound} in this section. By symmetry, we focus on the case $E_{\vp_{a}}(a)<E_{\vp_{a}}(b)$. Therefore, 
\begin{itemize}
\item[$\bullet$] $\rm (H1)$, $\rm(H2)$, $\rm(H3)$, $\rm(H4)$ and $E_{\vp_{a}}(a)<E_{\vp_{a}}(b)$
\end{itemize}
is always assumed in this section. Also, all the Mezincescu boundary conditions in this section are defined via $\vp_{a}$.

The following lemma is the key to the proof of Theorem \ref{thm-upper-bound}. Its proof is based on the operator theoretical trick (a comparison method) developed in \cite[Theorem 2.1, Lemma 2.1]{KN09} by Klopp and Nakamura.

\begin{lem}\label{lem-upper-bound-1}
There exists some $C>0$ such that
\begin{equation*}
N(E)\leq N_{a}(C(E-E_{\vp_{a}}(a))),\,\, E\in\R,
\end{equation*}
where $N_{a}$ is the IDS of
\begin{equation*}
H_{a,\om}=H_{a}-E_{\vp_{a}}(a)+\sum_{i\in\Z^{d}}(\om_{i}-a)1_{\CC_{0}}(\cdot-i).
\end{equation*}
\end{lem}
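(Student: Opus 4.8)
The plan is to follow the comparison method of Klopp–Nakamura \cite{KN09}: compare $H_\om$ (shifted down by $E_{\vp_a}(a)$) with the monotone alloy-type model $H_{a,\om}$ whose single-site potential is the non-negative indicator $1_{\CC_0}$, so that the non-monotone problem is dominated by a standard monotone one for which Lifshitz tails are classical. The bridge is an operator inequality between the two models restricted to boxes, combined with the variational characterization of eigenvalue counting functions and the bracketing \eqref{IDS-expectation} for the Mezincescu-defined IDS.

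First I would record the ground-state factorization for $H_a$: writing $\vp_a$ for the (continuously differentiable, strictly positive, $\Z^d$-periodic) ground state with $H_a\vp_a = E_{\vp_a}(a)\vp_a$ (using Lemma \ref{lem-Mezin} to transfer this ground-state energy to the localized operator with Mezincescu boundary condition defined via $\vp_a$), the quadratic form of $H_a - E_{\vp_a}(a)$ acting on $\phi=\vp_a\psi$ becomes the Dirichlet-form-type expression $\int \vp_a^2|\nabla\psi|^2$ on the relevant cube with the Mezincescu boundary term exactly cancelling the boundary contribution; this is the ``ground state transform.'' Then, on a box $\La_L$, the quadratic form of $H_{\om,\La_L} - E_{\vp_a}(a)$ equals that of $H_{a,\La_L}-E_{\vp_a}(a)$ plus the perturbation $\sum_i(\om_i-a)\int u(\cdot-i)|\phi|^2$. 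The key estimate — and the main obstacle — is to show that on the ground-state-transformed side this random perturbation $\sum_i(\om_i-a)\int u(x-i)\vp_a(x)^2|\psi(x)|^2\,dx$ is bounded below by $-C'\sum_i(\om_i-a)\int 1_{\CC_0}(x-i)\vp_a(x)^2|\psi(x)|^2\,dx$ up to a controlled multiple of the transformed kinetic energy $\int\vp_a^2|\nabla\psi|^2$; this is precisely where $\supp u\subset\CC_0$, $u\in L^p$ with $p>d$, the strict positivity and smoothness of $\vp_a$, and a local Sobolev/interpolation inequality on $\CC_0$ enter, as in \cite[Lemma 2.1]{KN09}. The point is that the negative part $u_-$, being supported on the unit cell and relatively form-bounded with relative bound zero with respect to $-\De$, can be absorbed into a fraction of the kinetic term plus a constant times $1_{\CC_0}$; since $\om_i-a\ge 0$ this preserves the correct sign.

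Carrying this through, I would obtain an operator inequality of the form $H_{\om,\La_L}-E_{\vp_a}(a)\ \ge\ c\big(H_{a,\La_L}-E_{\vp_a}(a)\big)$ in the sense of quadratic forms on $\La_L$ (with the Mezincescu boundary condition via $\vp_a$ on the left and the corresponding boundary condition on the right, both respecting the bracketing of Lemma \ref{app-lem-bracketing}), for a constant $c\in(0,1]$ independent of $L$ and of $\om$. By the min-max principle this yields, for every $E$,
\begin{equation*}
N(H_{\om,\La_L},E)\ \le\ N\big(H_{a,\La_L},E_{\vp_a}(a)+c^{-1}(E-E_{\vp_a}(a))\big).
\end{equation*}
Taking expectations, dividing by $L^d$, and passing to the infimum over $L$ via \eqref{IDS-expectation} for the left side and the known convergence of the counting functions for the monotone model $H_{a,\om}$ on the right, I get $N(E)\le N_a\big(c^{-1}(E-E_{\vp_a}(a))\big)$; setting $C=c^{-1}$ finishes the proof. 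I would finally remark that $H_{a,\om}=H_a-E_{\vp_a}(a)+\sum_i(\om_i-a)1_{\CC_0}(\cdot-i)$ is a genuine non-negative alloy model with $\inf\si = 0$ (its bottom coming from the zero-potential configuration, consistent with $E_{\vp_a}(a)=E_0$ under the hypothesis of this section), so that $N_a$ is indeed the object to which classical Lifshitz-tail estimates apply — which is how Theorem \ref{thm-upper-bound} will be deduced in the remainder of the section. The one technical care point is matching boundary conditions across the inequality; I expect to handle this exactly as in \cite{KN09}, noting that the Mezincescu boundary term for $\vp_a$ is what makes the ground state transform boundary-term-free, and the cube side lengths can be taken odd so Lemma \ref{app-lem-bracketing} applies verbatim.
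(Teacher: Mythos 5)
Your overall architecture is the same as the paper's: use \eqref{IDS-expectation}, the Mezincescu bracketing of Lemma \ref{app-lem-bracketing} to decouple $\La_{L}$ into unit cubes, reduce to a single-cube operator inequality of the form $H_{a,\CC_{0}}-E_{\vp_{a}}(a)+t-a\leq C(H_{t,\CC_{0}}-E_{\vp_{a}}(a))$ for all $t\in[a,b]$, and then conclude by min-max. That reduction is correct and is exactly what the paper does.

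The gap is in your proof of the single-cube inequality, and it is a genuine one. Your mechanism (ground state transform, then absorb $(\om_{i}-a)u_{-}$ into a fraction of the transformed kinetic energy $\int\vp_{a}^{2}|\nabla\psi|^{2}$ plus a constant multiple of $1_{\CC_{0}}$) nowhere uses the standing hypothesis $E_{\vp_{a}}(a)<E_{\vp_{a}}(b)$, yet the inequality you need is false without it: taking $\phi=\vp_{a}|_{\CC_{0}}$ (i.e.\ $\psi$ constant) in $H_{t,\CC_{0}}-E_{\vp_{a}}(a)\geq\frac{1}{C}\big(H_{a,\CC_{0}}-E_{\vp_{a}}(a)+t-a\big)$ kills the kinetic term entirely and leaves the demand $(t-a)\lan\vp_{a},u\vp_{a}\ran\geq\frac{1}{C}(t-a)\|\vp_{a}\|^{2}$; more to the point, putting $t=b$ the operator inequality forces $E_{\vp_{a}}(b)-E_{\vp_{a}}(a)\geq\frac{1}{C}(b-a)>0$, so any proof must invoke the strict gap. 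A Sobolev/relative-form-boundedness estimate for $u_{-}$ produces a lower bound of the shape $-\ep\int\vp_{a}^{2}|\nabla\psi|^{2}-C_{\ep}(t-a)\int_{\CC_{0}}\vp_{a}^{2}|\psi|^{2}$, and the residual negative constant term cannot be absorbed because $H_{a,\CC_{0}}-E_{\vp_{a}}(a)$ has $0$ in its spectrum. The paper's route is different and is where the hypothesis enters: since $\inf\si(H_{b,\CC_{0}}-E_{\vp_{a}}(a))=E_{\vp_{a}}(b)-E_{\vp_{a}}(a)>0$, regular perturbation theory gives $\beta>b$ and $\de>0$ with $\inf\si(H_{\beta,\CC_{0}}-E_{\vp_{a}}(a))=\de$; writing $H_{t,\CC_{0}}-E_{\vp_{a}}(a)$ as the convex combination $\big(1-\frac{t-a}{\beta-a}\big)(H_{a,\CC_{0}}-E_{\vp_{a}}(a))+\frac{t-a}{\beta-a}(H_{a,\CC_{0}}-E_{\vp_{a}}(a)+(\beta-a)u)$ and bounding the second summand below by $\de$ yields the inequality with $\frac{1}{C}=\min\big\{1-\frac{b-a}{\beta-a},\frac{\de}{\beta-a}\big\}$. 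You should replace your key estimate by this convexity argument (no ground state transform is needed for this lemma).
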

\begin{proof}
Let $E\in\R$. By \eqref{IDS-expectation}, to show $N(E)\leq N_{a}(C(E-E_{\vp_{a}}(a)))$, it suffices to show that for large $L\in2\N_{0}+1$
\begin{equation*}
N(H_{\om,\La_{L}},E)\leq N(H_{a,\om,\La_{L}},C(E-E_{\vp_{a}}(a))),
\end{equation*}
which is true if the operator inequality $H_{a,\om,\La_{L}}\leq C(H_{\om,\La_{L}}-E_{\vp_{a}}(a))$ holds in the sense of quadratic form, that is,
\begin{equation}\label{quadratic-form-inequality}
\lan \phi,H_{a,\om,\La_{L}}\phi\ran\leq C\lan\phi,(H_{\om,\La_{L}}-E_{\vp_{a}}(a))\phi\ran,\quad\forall\phi\in H^{1}(\La_{L}).
\end{equation}
Using Lemma \ref{app-lem-bracketing} and Remark \ref{rem-bracketing}, we find for any $\phi\in H^{1}(\La_{L})$
\begin{equation*}
\lan \phi,H_{a,\om,\La_{L}}\phi\ran=\sum_{i\in\Z^{d}\cap\La_{L}}\lan\phi|_{\CC_{i}},H_{a,\om,\CC_{i}}\phi|_{\CC_{i}}\ran
\end{equation*}
and
\begin{equation*}
\lan\phi,(H_{\om,\La_{L}}-E_{\vp_{a}}(a))\phi\ran=\sum_{i\in\Z^{d}\cap\La_{L}}\lan\phi|_{\CC_{i}},(H_{\om,\CC_{i}}-E_{\vp_{a}}(a))\phi|_{\CC_{i}}\ran.
\end{equation*}
Therefore, to show \eqref{quadratic-form-inequality}, it suffices to require that for all $i\in\Z^{d}\cap\La_{L}$
\begin{equation}\label{quadratic-form-inequalities}
\lan\phi,H_{a,\om,\CC_{i}}\phi\ran\leq C\lan\phi,(H_{\om,\CC_{i}}-E_{\vp_{a}}(a))\phi\ran,\quad\forall\phi\in H^{1}(\CC_{i}).
\end{equation}
By the $\Z^{d}$-periodicity of $V_{0}$ and $\vp$, for any $i\in\Z^{d}\cap\La_{L}$, $H_{a,\om,\CC_{i}}$ and $H_{\om,\CC_{i}}-E_{\vp_{a}}(a)$ are unitarily equivalent to $H_{a,\CC_{0}}-E_{\vp_{a}}(a)+\om_{i}-a$ and $H_{\om_{i},\CC_{0}}-E_{\vp_{a}}(a)$, respectively, which will lead to \eqref{quadratic-form-inequalities} if we can show that
\begin{equation}\label{operator-inequality}
H_{a,\CC_{0}}-E_{\vp_{a}}(a)+t-a\leq C(H_{t,\CC_{0}}-E_{\vp_{a}}(a)),\quad\forall t\in[a,b].
\end{equation}

To finish the proof, we show \eqref{operator-inequality}. Since
\begin{equation*}
\inf\si(H_{b,\CC_{0}}-E_{\vp_{a}}(a))=E_{\vp_{a}}(b)-E_{\vp_{a}}(a)>0
\end{equation*}
by assumption, regular perturbation theory (see e.g. \cite{RS78}) ensures that we can find some $\beta>b$ and $\de>0$ such that $\inf\si(H_{\beta,\CC_{0}}-E_{\vp_{a}}(a))=\de$. It then follows that for any $t\in[a,b]$
\begin{equation*}
\begin{split}
&H_{t,\CC_{0}}-E_{\vp_{a}}(a)\\
&\quad\quad=H_{a,\CC_{0}}-E_{\vp_{a}}(a)+(t-a)u\\
&\quad\quad=\bigg(1-\frac{t-a}{\beta-a}\bigg)(H_{a,\CC_{0}}-E_{\vp_{a}}(a))+\frac{t-a}{\beta-a}(H_{a,\CC_{0}}-E_{\vp_{a}}(a)+(\beta-a)u)\\
&\quad\quad\geq\bigg(1-\frac{b-a}{\beta-a}\bigg)(H_{a,\CC_{0}}-E_{\vp_{a}}(a))+\frac{\de}{\beta-a}(t-a)\\
&\quad\quad\geq\frac{1}{C}(H_{a,\CC_{0}}-E_{\vp_{a}}(a)+t-a),
\end{split}
\end{equation*}
where $\frac{1}{C}=\min\big\{1-\frac{b-a}{\beta-a},\frac{\de}{\beta-a}\big\}$. This completes the proof.
\end{proof}

We now prove Theorem \ref{thm-upper-bound}.

\begin{proof}[Proof of Theorem \ref{thm-upper-bound}]
By means of Lemma \ref{lem-upper-bound-1}, to prove the upper bound, it suffices to estimate an appropriate upper bound for $N_{a}$. To do so, we set $H_{a,\per}=H_{a}-E_{\vp_{a}}(a)$, that is, the periodic part of $H_{a,\om}$. Clearly, $\inf\si(H_{a,\per})=0$ and the ground state of $H_{a,\per}$ is the same as that of $H_{a}$.

For upper bound for $N_{a}$, we claim that there exist $C_{1}>0$ and $C_{2}>0$ such that for all $E\in\R$ and all large $L\in2\N_{0}+1$
\begin{equation*}\label{upper-bound-estimate}
N_{a}(E)\leq\frac{1}{L^{d}}N(H_{a,{\per},\La_{L}},E)\P(\Om_{a,L,E}),
\end{equation*}
where $N(H_{a,{\per},\La_{L}},\cdot)$ is the eigenvalue counting function of $H_{a,{\per},\La_{L}}$ and
\begin{equation*}
\Om_{a,L,E}=\bigg\{\om\in\Om\bigg|\frac{\#\big\{i\in\Z^{d}\cap\La_{L}\big|\om_{i}-a<C_{1}L^{-2}\big\}}{L^{d}}>C_{2}L^{2}E\bigg\}.
\end{equation*}
Indeed, since $N(H_{a,\om,\La_{L}},E)=0$ for $E<E_{0}(H_{a,\om,\La_{L}})$, we have
\begin{equation*}
\begin{split}
\E\{N(H_{a,\cdot,\La_{L}},E)\}&=\int_{\big\{\om\in\Om\big|E_{0}(H_{a,\om,\La_{L}})\leq E\big\}}N(H_{a,\om,\La_{L}},E)d\P(\om)\\
&\leq N(H_{a,{\per},\La_{L}},E)\P\big\{\om\in\Om\big|E_{0}(H_{a,\om,\La_{L}})\leq E\big\},
\end{split}
\end{equation*}
where we used the fact $H_{a,\om,\La_{L}}\geq H_{a,\per,\La_{L}}$ such that $N(H_{a,\om,\La_{L}},E)\leq N(H_{a,{\per},\La_{L}},E)$. The same reason for \eqref{IDS-expectation} implies that
\begin{equation*}
N_{a}(E)\leq\frac{1}{L^{d}}N(H_{a,{\per},\La_{L}},E)\P\big\{\om\in\Om\big|E_{0}(H_{a,\om,\La_{L}})\leq E\big\}.
\end{equation*}
The estimate $\P\big\{\om\in\Om\big|E_{0}(H_{a,\om,\La_{L}})\leq E\big\}\leq\P(\Om_{a,L,E})$, following from Temple's inequality (see e.g. \cite[Theorem XIII.5]{RS78}), is standard. We refer to \cite{KW05} for more details.

Considering  the van-Hove singularity (see e.g. \cite{KS87}) of the IDS of $H_{a,\rm per}$ near $0$, the theorem is a consequence of Lemma \ref{lem-upper-bound-1} and the above claim with a large deviation argument (see e.g. \cite{Ki08}).
\end{proof}

\section{Lower Bound of Ground State Energy}\label{sec-lower-bound-gse}

This section serves as a preparation for proofs of Theorem \ref{thm-Lifshitz-tail-non-critical} and Theorem \ref{thm-Lifshitz-tail-non}, which will be given in Section \ref{sec-LT-non-optimal-upper-bound}. Thus, we treat the problem under $\rm(i)$ $E_{\vp_{a}}(a)=E_{\vp_{a}}(b)$ and $E_{\vp_{b}}(a)\leq E_{\vp_{b}}(b)$, or $\rm(ii)$ $E_{\vp_{a}}(a)\geq E_{\vp_{a}}(b)$ and $E_{\vp_{b}}(a)=E_{\vp_{b}}(b)$. By Theorem \ref{thm-spectral-minimum}, $E_{0}=E_{\vp_{a}}(a)=E_{\vp_{a}}(b)$ if $\rm(i)$ is satisfied, and $E_{0}=E_{\vp_{b}}(a)=E_{\vp_{b}}(b)$ if $\rm(ii)$ is satisfied.

To fix the ideal, we focus on $\rm(i)$. Also, to simplify statements, we always assume
\begin{itemize}
\item[$\bullet$] $\rm (H1)$, $\rm(H2)$, $\rm(H3)$, $\rm(H4)$, $\rm(H5)$, $\rm(H6)$, $E_{\vp_{a}}(a)=E_{\vp_{a}}(b)$ and $E_{\vp_{b}}(a)\leq E_{\vp_{b}}(b)$.
\end{itemize}
Therefore, all the Mezincescu boundary conditions in this section are defined via $\vp_{a}$.

We point out that if $E_{\vp_{a}}(a)=E_{\vp_{a}}(b)$ and $E_{\vp_{b}}(a)=E_{\vp_{b}}(b)$, then $\rm(H6)$ is not required, in fact, $\rm(H6)$ is always the case (see Lemma \ref{lem-key-2} below).

To state the main result in this section, we set
\begin{equation}\label{aux-domain}
\begin{split}
\Om_{0}&=\bigg(-\frac{1}{2},\frac{1}{2}\bigg)^{d-1}\times\bigg(-\frac{m}{2},-\frac{1}{2}\bigg),\quad m\in2\N_{0}+3,\\
\Om_{M}&=\bigg(-\frac{1}{2},\frac{1}{2}\bigg)^{d-1}\times\bigg(-\frac{1}{2},\frac{M}{2}\bigg),\quad M\in2\N_{0}+1,\\
\Om_{0M}&=\text{int}(\ol{\Om_{0}\cup\Om_{M}})
\end{split}
\end{equation}
and consider the operator
\begin{equation*}
-\De_{\Om_{0M}}+V_{0}1_{\Om_{0M}}+W_{\Om_{0M}}\quad\text{on}\quad\Om_{0M},
\end{equation*}
where the potential $W_{\Om_{0M}}$ is defined as follows: $W_{\Om_{0M}}1_{\Om_{0}}$ is of the form $\sum_{i\in\Z^{d}\cap\Om_{0}}\om_{i}u(\cdot-i)$ and $W_{\Om_{0M}}1_{\Om_{M}}=a\sum_{i\in\Z^{d}\cap\Om_{M}}u(\cdot-i)$ or $b\sum_{i\in\Z^{d}\cap\Om_{M}}u(\cdot-i)$. Our goal is to prove

\begin{thm}\label{thm-lower-bound-gse-main}
If $\inf\si(-\De_{\Om_{0}}+V_{0}1_{\Om_{0}}+W_{\Om_{0M}}1_{\Om_{0}})>E_{0}$, then there's some $M$-independent $C>0$ such that
\begin{equation*}
\inf\si(-\De_{\Om_{0M}}+V_{0}1_{\Om_{0M}}+W_{\Om_{0M}})\geq E_{0}+\frac{C}{M^{2}}
\end{equation*}
for all $M\in2\N_{0}+1$.
\end{thm}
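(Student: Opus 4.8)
The plan is to establish the uniform spectral gap by a compactness/deformation argument that splits the domain $\Om_{0M}$ into the ``obstacle'' piece $\Om_0$ (where the potential $W_{\Om_{0M}}$ is genuinely random and by hypothesis pushes the ground state energy strictly above $E_0$) and the ``periodic'' piece $\Om_M$ (where $W_{\Om_{0M}}$ equals $a$ or $b$ times the periodic sum, so that on this long tube the operator is a restriction of the periodic operator $H_a$ or $H_b$, and by assumption $E_{\vp_a}(a)=E_{\vp_a}(b)=E_0$, i.e.\ the ground state energy of the tube operator with the Mezincescu boundary condition on $\partial\CC_i$ is exactly $E_0$). First I would observe that, by Lemma \ref{app-lem-bracketing} and Remark \ref{rem-bracketing} (applied with $\vp=\vp_a$), the operator on $\Om_{0M}$ dominates, in the form sense, the direct sum over the unit cells $\CC_i\subset\Om_{0M}$ of the cell operators $H_{\om_i,\CC_i}$ (resp.\ $H_{a,\CC_i}$ or $H_{b,\CC_i}$). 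Each of these cell operators has ground state energy $\geq E_0$, and on the $\Om_M$-cells the ground state energy is exactly $E_0$ with ground state $\vp_a|_{\CC_i}$ extending the periodic ground state. So the bare bracketing gives $\geq E_0$ but no gap; the content of the theorem is to recover the $C/M^2$ improvement.

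The core estimate I would aim for is a quantitative ``lifting'' lemma: for $\psi\in H^1(\Om_{0M})$ normalized, write $\langle\psi,(H_{\Om_{0M}}-E_0)\psi\rangle$ and decompose $\Om_{0M}=\Om_0\cup\Om_M$ (sharing the interface $\CC_0$, or more precisely the hyperplane $x_d=-\tfrac12$). On $\Om_0$ we use the strict gap hypothesis: $\langle\psi|_{\Om_0},(H_{\Om_0}-E_0)\psi|_{\Om_0}\rangle\geq \gamma_0\|\psi|_{\Om_0}\|^2$ for some fixed $\gamma_0>0$ independent of $M$ (here $H_{\Om_0}$ carries the Mezincescu condition on all of $\partial\Om_0$). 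On $\Om_M$ we want a Poincaré-type inequality: the operator $H_{\Om_M}-E_0$ has a one-dimensional kernel spanned by the periodic ground state $\Phi_M:=\vp_a|_{\Om_M}$ (this is where $E_{\vp_a}(a)=E_{\vp_a}(b)=E_0$ and the bracketing rigidity of Lemma \ref{lem-Mezin} are essential — each cell contributes $\vp_a$ with matching boundary data, so the tube ground state is the periodic one and is genuinely in the form domain with the Mezincescu boundary condition), and its spectral gap above $E_0$ is $\geq c/M^2$ by a standard one-dimensional argument in the long ($x_d$) direction (the transverse directions have length $1$ and contribute a strictly positive, $M$-independent gap, so the bottleneck is the longitudinal mode, giving the $M^{-2}$ scaling). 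Hence $\langle\psi|_{\Om_M},(H_{\Om_M}-E_0)\psi|_{\Om_M}\rangle\geq \tfrac{c}{M^2}\,\mathrm{dist}_{L^2(\Om_M)}(\psi|_{\Om_M},\,\C\Phi_M)^2$.

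Combining: if $\psi$ has small $\Om_0$-mass, then its $\Om_M$-part must carry almost all the $L^2$ mass, but being close to $\C\Phi_M$ forces a definite amount of mass near the interface $x_d=-\tfrac12$ (since $\Phi_M$ is bounded below by a positive constant uniformly in $M$, its mass in the boundary cell $\CC_0$ is $\gtrsim M^{-1}$ of its total — a Chebyshev-type lower bound); matching that boundary data against the $\Om_0$-piece then re-activates the $\gamma_0$-gap on $\Om_0$. Quantifying this trade-off — balancing $\gamma_0\|\psi|_{\Om_0}\|^2$ against $\tfrac{c}{M^2}\|\psi|_{\Om_M}-\Phi_M$-component$\|^2$ and using that the two pieces glue along a common interface where $\psi$ is continuous — yields $\langle\psi,(H_{\Om_{0M}}-E_0)\psi\rangle\geq \tfrac{C}{M^2}$ for a new $M$-independent $C>0$. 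Here $\rm(H6)$ enters precisely to pin down the matching constant $\nu=1$ between the restrictions of the relevant ground state to $\CC_{(0,0)}$ and $\CC_{(0,1)}$, so that the gluing at the interface is compatible with the periodic ground state rather than a rescaled one; without it the interface argument would lose the clean comparison.

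\textbf{Main obstacle.} The delicate point is the interface/gluing step: bracketing alone decouples the cells and destroys exactly the coupling one needs to convert ``small $\Om_0$-mass'' into ``cost on $\Om_M$''. I expect the real work is to avoid bracketing at the interface and instead run a direct variational argument on $\Om_{0M}$ with a careful partition-of-unity (IMS-type) localization that is asymptotically lossless as $M\to\infty$ (localization error $O(M^{-2})$, same order as the gap we are chasing), together with the uniform-in-$M$ lower bound on $\Phi_M$ and on its trace on the interface. Controlling that the localization error does not swamp the $C/M^2$ gain — and that the transverse (unit-length) directions genuinely contribute an $M$-independent positive gap so that the scaling is $M^{-2}$ and not worse — is where the estimates must be done with care.
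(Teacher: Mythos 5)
Your high-level skeleton does match the paper's strategy (an adaptation of the Klopp--Nakamura quasi-one-dimensional estimate): a uniform gap on $\Om_{0}$, an $M^{-2}$ Poincar\'{e}-type gain on the long tube $\Om_{M}$ after a ground state transform, and a coupling of the two pieces across the interface $S=\{x_{d}=-\tfrac12\}$. However, the step you yourself flag as the ``main obstacle'' --- converting the strict gap on $\Om_{0}$ into a usable term on the interface without losing it to bracketing or to localization errors --- is precisely the content of the paper's proof, and your proposal does not supply it. The IMS route you sketch must beat a localization error of order $M^{-2}$ against a gain of order $M^{-2}$, and nothing in the proposal shows the constants line up, so as written the argument could just as well produce a vacuous bound. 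The paper avoids cutoffs entirely: it works with the true ground state $\psi_{0M}$ of $P_{0M}$, notes that $\psi_{0M}|_{\Om_{0}}$ solves the boundary value problem \eqref{eigen-problem} with its own trace as data, introduces the Dirichlet-to-Neumann map $T_{\la}g=\Ga_{\Om_{0}}(\pa_{d}\psi)$, and proves in Lemma \ref{lem-estimate} --- via Green's formula, the strict positivity of $P_{0}-\la_{0}$ (equivalence of $\|\sqrt{P_{0}-\la_{0}}\cdot\|$ with the $H^{1}(\Om_{0})$-norm) and the boundedness of the trace --- that $\lan g,T_{\la}g\ran+\int_{S}\chi_{\Om_{0}}|g|^{2}\geq\ep\|g\|^{2}$ uniformly for $\la\in[E_{0},\la_{0}]$. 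A second application of Green's formula on $\Om_{M}$ replaces the interface flux of $\psi_{0M}$ by $T_{\la_{0M}}$ acting on its trace, and the resulting positive boundary term $\ep\|\Ga_{M}\psi_{0M}^{\Om_{M}}\|^{2}$ is exactly what the trace--Poincar\'{e} inequality \eqref{estimate-1}, after the ground state transform of Lemma \ref{lem-estimate-1}, needs in order to give $\la_{0M}\geq E_{0}+C/M^{2}$. In other words, the interface input is a lower bound on the quadratic form of a Dirichlet-to-Neumann operator, obtained from the $\Om_{0}$-gap; it is not the Chebyshev-type ``mass near the interface'' heuristic of your sketch, which by itself does not interact with the form on $\Om_{0}$ in any quantitative way.

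A second gap: your tube-side analysis is only correct in the case $W_{\Om_{0M}}1_{\Om_{M}}=a\sum_{i}u(\cdot-i)$, where the Mezincescu condition is built from $\vp_{a}$ and the tube ground state is indeed $\vp_{a}|_{\Om_{M}}$ at energy $E_{0}$, uniformly bounded below. In the case $W_{\Om_{0M}}1_{\Om_{M}}=b\sum_{i}u(\cdot-i)$ the boundary condition is still defined via $\vp_{a}$, so the tube ground state energy $E_{M}^{*}$ is not inherited from $H_{b}$; the paper needs the dichotomy of Lemma \ref{lem-key} (either $E_{M}^{*}=E_{0}$ for all $M$, with the product structure $\vp_{M}^{*}|_{\CC_{(0,r)}}=\nu^{r}\vp_{M}^{*}|_{\CC_{(0,0)}}(\cdot-(0,r))$, or a uniform gap $\de>0$), and then Lemma \ref{lem-key-2} or assumption $\rm(H6)$ to force $\nu=1$. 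The point of $\nu=1$ is not ``gluing compatibility at the interface'' as you suggest, but uniform control of $\inf\vp_{M}^{*}/\sup\vp_{M}^{*}$ in $M$: for $\nu\neq1$ the tube ground state grows or decays geometrically along the tube, the ground-state-transform constant in the Poincar\'{e} estimate degenerates, and the $M^{-2}$ bound is lost. Your claim that the tube operator has a one-dimensional kernel with an $M^{-2}$ gap, uniformly in $M$, silently presupposes exactly this control, so the proposal would need both the Dirichlet-to-Neumann mechanism and the Lemma \ref{lem-key}/\ref{lem-key-2} dichotomy (as in Theorems \ref{thm-lower-bound-estimate} and \ref{thm-lower-bound-estimate-1}) to become a proof.
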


We remark that the constant $C$ in Theorem \ref{thm-lower-bound-gse-main} does depend on $m\in2\N_{0}+3$ and the potential $W_{\Om_{0M}}1_{\Om_{0}}$. We here do not make this dependence clear for the reason that it will not play a role when we apply Theorem \ref{thm-lower-bound-gse-main} in Section \ref{sec-LT-non-optimal-upper-bound} (see Remark \ref{rem-lower-bound-gse}$\rm(ii)$ for more details).

We also need a result with $\Om_{0}$ above $\Om_{M}$ in terms of the $d$-th coordinate. Since its proof is the same as that of Theorem \ref{thm-lower-bound-gse-main}, we will only state it in Theorem \ref{thm-lower-bound-gse-main-1}.

Due to technical reasons, the proof of Theorem \ref{thm-lower-bound-gse-main} will be separated according to the cases $W_{\Om_{0M}}1_{\Om_{M}}=a\sum_{i\in\Z^{d}\cap\Om_{M}}u(\cdot-i)$ (in Subsection \ref{subsec-inherited-gse}) and $W_{\Om_{0M}}1_{\Om_{M}}=b\sum_{i\in\Z^{d}\cap\Om_{M}}u(\cdot-i)$ (in Subsection \ref{subsec-non-inherited-gse}). See Theorem \ref{thm-lower-bound-estimate} and Theorem \ref{thm-lower-bound-estimate-1} below.

\subsection{Inherited Ground State Energy}\label{subsec-inherited-gse}

We prove Theorem \ref{thm-lower-bound-gse-main} in the case $W_{\Om_{0M}}1_{\Om_{M}}=a\sum_{i\in\Z^{d}\cap\Om_{M}}u(\cdot-i)$. Consider operators defined as follows: let $\Om_{0}=(-\frac{1}{2},\frac{1}{2})^{d-1}\times(-\frac{m}{2},-\frac{1}{2})$ with $m\in2\N_{0}+3$ and set
\begin{equation*}
P_{0}=-\De_{\Om_{0}}+W_{0},
\end{equation*}
where $W_{0}$ is of the form $V_{0}1_{\Om_{0}}+\sum_{i\in\Z^{d}\cap\Om_{0}}\om_{i}u(\cdot-i)$; let $\Om_{M}=(-\frac{1}{2},\frac{1}{2})^{d-1}\times(-\frac{1}{2},\frac{M}{2})$ with $M\in2\N_{0}+1$ and set
\begin{equation*}
H_{a,\Om_{M}}=-\De_{\Om_{M}}+W_{M}=-\De_{\Om_{M}}+V_{0}1_{\Om_{M}}+a\sum_{i\in\Z^{d}\cap\Om_{M}}u(\cdot-i);
\end{equation*}
let $\Om_{0M}=\text{int}(\ol{\Om_{0}\cup\Om_{M}})$ and set
\begin{equation*}
P_{0M}=-\De_{\Om_{0M}}+W_{0M},
\end{equation*}
where $W_{0M}=W_{0}1_{\Om_{0}}+W_{M}1_{\Om_{M}}$.

Note $H_{a,\Om_{M}}$ is the operator $H_{a}$ restricted to $\Om_{M}$ with Mezincescu boundary condition. Since Mezincescu boundary condition is defined via $\vp_{a}$, the ground state energy and the ground state of $H_{a,\Om_{M}}$ are inherited from that of $H_{a}$, that is, $\inf\si(H_{a,\Om_{M}})=\inf\si(H_{a})=E_{0}$ and the ground state of $H_{a,\Om_{M}}$ is nothing but $\vp_{a}$ restricted to $\Om_{M}$.

Theorem \ref{thm-lower-bound-gse-main} in the case $W_{\Om_{0M}}1_{\Om_{M}}=a\sum_{i\in\Z^{d}\cap\Om_{M}}u(\cdot-i)$ is restated as

\begin{thm}\label{thm-lower-bound-estimate}
If $\inf\si(P_{0})>E_{0}$, then there exists some $M$-independent constant $C>0$ such that
\begin{equation*}
\inf\si(P_{0M})\geq E_{0}+\frac{C}{M^{2}}
\end{equation*}
for all $M\in2\N_{0}+1$.
\end{thm}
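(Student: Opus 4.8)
The plan is to establish the lower bound $\inf\si(P_{0M})\geq E_{0}+C/M^{2}$ by a standard "gap-plus-decay" argument, exploiting the fact that the ground state energy and ground state of $H_{a,\Om_{M}}$ are inherited from the periodic operator $H_{a}$, so that $\inf\si(H_{a,\Om_{M}})=E_{0}$ with ground state $\vp_{a}|_{\Om_{M}}$. The key point is that $P_{0M}$ differs from the decoupled direct sum $P_{0}\oplus H_{a,\Om_{M}}$ only through the coupling across the shared interface $\pa\Om_{0}\cap\pa\Om_{M}$, and by part (ii) of Lemma \ref{app-lem-bracketing} the Mezincescu boundary terms $\chi$ on that interface from the two sides cancel. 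Thus in terms of quadratic forms, $P_{0M}$ is exactly $P_{0}\oplus H_{a,\Om_{M}}$ as a form, but defined on the larger form domain $H^{1}(\Om_{0M})$ of functions that match across the interface (rather than $H^{1}(\Om_{0})\oplus H^{1}(\Om_{M})$). Equivalently, $P_{0M}\geq$ the decoupled Dirichlet-interface operator is false in general, but the Neumann-type bracketing gives us precisely the form identity we want to compute with.

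First I would set up the variational estimate: take a normalized $\phi\in H^{1}(\Om_{0M})$ with $\lan\phi,P_{0M}\phi\ran$ close to $\inf\si(P_{0M})$, write $\phi_{0}=\phi|_{\Om_{0}}$ and $\phi_{M}=\phi|_{\Om_{M}}$, so that
\begin{equation*}
\lan\phi,P_{0M}\phi\ran=\lan\phi_{0},P_{0}\phi_{0}\ran+\lan\phi_{M},H_{a,\Om_{M}}\phi_{M}\ran.
\end{equation*}
The hypothesis $\inf\si(P_{0})>E_{0}$ gives $\lan\phi_{0},P_{0}\phi_{0}\ran\geq(E_{0}+\delta_{0})\|\phi_{0}\|^{2}$ for some fixed $\delta_{0}>0$ (independent of $M$, depending only on $m$ and $W_{0}$). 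For the $\Om_{M}$ piece I would use the ground-state substitution $\phi_{M}=\vp_{a}\psi$ on $\Om_{M}$, which is legitimate since $\vp_{a}$ is bounded above and below by positive constants; the standard ground-state transform yields
\begin{equation*}
\lan\phi_{M},(H_{a,\Om_{M}}-E_{0})\phi_{M}\ran=\int_{\Om_{M}}\vp_{a}^{2}|\nabla\psi|^{2},
\end{equation*}
with no boundary term surviving because the Mezincescu condition is exactly the one making $\vp_{a}$ the ground state of $H_{a,\Om_{M}}$. So the whole problem reduces to a weighted Poincaré-type inequality: controlling $\|\phi_{M}\|^{2}$ by $\int_{\Om_{M}}\vp_{a}^{2}|\nabla\psi|^{2}$ plus a term localized near the interface.

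The main obstacle is the interface matching. Since $\phi_{0}$ and $\phi_{M}$ agree on $\pa\Om_{0}\cap\pa\Om_{M}$, and $\phi_{0}$ lives in the "gapped" region, the weighted function $\psi=\phi_{M}/\vp_{a}$ cannot be too large near the interface without costing form-energy on the $\Om_{0}$ side; on the other hand, if $\psi$ is roughly constant on $\Om_{M}$ then $\|\phi_{M}\|^{2}\sim M\|\phi\|^{2}$ while the interface trace is of order one, forcing a gradient of size $1/M$ over the length-$M$ tube $\Om_{M}$, which via the one-dimensional Poincaré inequality in the $d$-th coordinate (the tube is $(-\tfrac12,\tfrac12)^{d-1}\times(-\tfrac12,\tfrac M2)$, and $\vp_{a}$ is periodic hence uniformly elliptic as a weight) produces exactly the $1/M^{2}$ gain. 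I would make this quantitative by splitting: either a fixed fraction of the $L^{2}$-mass of $\phi$ sits in $\Om_{0}$, in which case the $\delta_{0}$-gap alone already gives a lower bound $E_{0}+c\delta_{0}\geq E_{0}+C/M^{2}$; or almost all the mass is in $\Om_{M}$, in which case a weighted Poincaré inequality on the long tube $\Om_{M}$ — comparing $\psi$ to its average and handling the interface trace via a trace/Poincaré estimate whose constant degrades like $M^{2}$ — bounds $\|\phi_{M}\|^{2}$ by $CM^{2}\big(\int_{\Om_{M}}\vp_{a}^{2}|\nabla\psi|^{2}+ (\text{interface energy})\big)$, the interface energy being absorbed into the $\Om_{0}$ form via the matching condition and the gap. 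Combining the two regimes and optimizing the split gives the claimed $E_{0}+C/M^{2}$ with $C$ depending on $m,\delta_{0}$ and ellipticity bounds for $\vp_{a}$ but not on $M$. The one technical care needed is that the Poincaré constant for the tube scales correctly only because $\Om_{M}$ is elongated in a single coordinate direction; the uniform positivity and boundedness of $\vp_{a}$, together with $\rm(H4)$, keep all constants under control.
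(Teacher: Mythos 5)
Your proposal is correct and follows essentially the same route as the paper: Mezincescu bracketing to split the quadratic form over $\Om_{0}$ and $\Om_{M}$, the gap $\inf\si(P_{0})>E_{0}$ converted into control of the interface trace (the paper packages this as positivity of the Dirichlet-to-Neumann-type operator $T_{\la}$ in Lemma \ref{lem-estimate}), and the ground-state transform combined with the quasi-one-dimensional Poincar\'{e} inequality with trace term (Lemma \ref{lem-estimate-1}) to extract the $1/M^{2}$ gain on the tube. The only cosmetic difference is that you argue variationally with a dichotomy on where the $L^{2}$-mass sits, while the paper works directly with the ground state of $P_{0M}$ via Green's formula; both yield the same key inequality.
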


To prove the above theorem, we adapt the quasi one-dimensional estimate developed by Klopp and Nakamura (see \cite{KN10}). We begin with several lemmas.

Set $S=(-\frac{1}{2},\frac{1}{2})^{d-1}\times\{-\frac{1}{2}\}$ and define the trace operator $\Ga_{M}:C^{0}(\Om_{M})\ra C^{0}(S)$ by
\begin{equation*}
(\Ga_{M}\psi)(x')=\psi(x',-\frac{1}{2}),\quad x'=(x_{1},\dots,x_{d-1})
\end{equation*}
for $\psi\in C^{0}(\Om_{M})$. $\Ga_{M}$ is then extended to a bounded linear operator from $H^{1}(\Om_{M})$ to $L^{2}(S)$. Following  \cite[Lemma 2.2]{KN10}, we have the variant of the classical Poincar\'{e} inequality
\begin{equation}\label{estimate-1}
\frac{4}{M}\|\Ga_{M}\psi\|^{2}+\|\nabla\psi\|^{2}\geq\frac{4}{M(M+1)}\|\psi\|^{2},\quad \psi\in H^{1}(\Om_{M})
\end{equation}
for all $M\in2\N_{0}+1$.

\begin{lem}\label{lem-estimate-1}
For any $\psi\in H^{1}(\Om_{M})$, there holds
\begin{equation*}
\frac{4}{M}\|\Ga_{M}\psi\|^{2}+Q_{M}(\psi,\psi)-E_{0}\|\psi\|^{2}\geq\bigg(\frac{\inf\vp_{a}}{\sup\vp_{a}}\bigg)^{2}\frac{4}{M(M+1)}\|\psi\|^{2},
\end{equation*}
where $Q_{M}(\cdot,\cdot)$ is the quadratic form of $H_{a,\Om_{M}}$.
\end{lem}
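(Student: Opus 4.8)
The plan is to relate the quadratic form $Q_M$ of the Mezincescu operator $H_{a,\Om_M}$ to the Dirichlet form $\|\nabla\cdot\|^2$ by a ground-state substitution, and then invoke the Poincaré-type inequality \eqref{estimate-1}. Recall from the discussion preceding the lemma that the Mezincescu boundary condition here is defined via $\vp_a$, and that $\inf\si(H_{a,\Om_M})=E_0$ with ground state $\vp_a|_{\Om_M}$ strictly positive and bounded above and below on the bounded domain $\Om_M$. This is what makes the substitution clean.

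First I would carry out the ground-state transformation: for $\psi\in H^1(\Om_M)$ write $\psi=\vp_a g$ with $g\in H^1(\Om_M)$ (this is legitimate since $\vp_a$ is $C^1$ and bounded below by a positive constant on $\ol{\Om_M}$). A direct computation, using $H_a\vp_a=E_0\vp_a$ and the precise form of $\chi_0$ entering the boundary integral in $Q_M$ — which is exactly designed so that the boundary terms coming from integration by parts against $\nabla\vp_a$ cancel — gives the standard identity
\begin{equation*}
Q_M(\psi,\psi)-E_0\|\psi\|^2=\int_{\Om_M}\vp_a^2|\nabla g|^2.
\end{equation*}
This is the heart of the matter: the Mezincescu boundary condition was chosen precisely so that $H_{a,\Om_M}-E_0$ is unitarily a weighted Dirichlet form with weight $\vp_a^2$, with no surviving boundary contribution. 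Likewise $\|\psi\|^2=\int_{\Om_M}\vp_a^2 g^2$ and $\|\Ga_M\psi\|^2=\int_S \vp_a^2 g^2$.

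Next I would estimate the weighted quantities by unweighted ones using the crude two-sided bound $\inf\vp_a\le\vp_a\le\sup\vp_a$ on $\ol{\Om_M}$ (finite and positive by periodicity of $\vp_a$ and compactness, independent of $M$). Specifically, $Q_M(\psi,\psi)-E_0\|\psi\|^2\ge(\inf\vp_a)^2\|\nabla g\|^2$, while $\frac{4}{M}\|\Ga_M\psi\|^2\ge\frac{4}{M}(\inf\vp_a)^2\|\Ga_M g\|^2$ — wait, for the left side we want a lower bound so we use $\vp_a^2\ge(\inf\vp_a)^2$ throughout; for the right side we want an upper bound on $\|\psi\|^2$ in terms of $\|g\|^2$, i.e. $\|\psi\|^2\le(\sup\vp_a)^2\|g\|^2$. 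Applying \eqref{estimate-1} to $g$ gives $\frac{4}{M}\|\Ga_M g\|^2+\|\nabla g\|^2\ge\frac{4}{M(M+1)}\|g\|^2\ge\frac{4}{M(M+1)}(\sup\vp_a)^{-2}\|\psi\|^2$. Multiplying through by $(\inf\vp_a)^2$ and combining yields exactly
\begin{equation*}
\frac{4}{M}\|\Ga_M\psi\|^2+Q_M(\psi,\psi)-E_0\|\psi\|^2\ge\Big(\frac{\inf\vp_a}{\sup\vp_a}\Big)^2\frac{4}{M(M+1)}\|\psi\|^2,
\end{equation*}
as claimed. The main obstacle is the ground-state substitution identity: one must verify carefully that the boundary integral $\int_{\pa\Om_M}\chi_0|\psi|^2$ in $Q_M$ combines with the boundary term from integrating $\int\overline{\nabla\psi}\cdot\nabla\psi$ by parts so as to leave precisely $\int\vp_a^2|\nabla g|^2$ with nothing extra — this is where the definition of $\chi_0$ via $\vp_a$ is used in an essential way, and it should be done either by the weak (quadratic-form) formulation directly or by first checking it for smooth $\psi$ and passing to the limit. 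Everything else is a routine application of \eqref{estimate-1} and the uniform bounds on $\vp_a$.
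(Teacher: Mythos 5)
Your proposal is correct and follows essentially the same route as the paper: a ground-state transform with respect to $\vp_{a}$ (the paper states the resulting bound \eqref{estimate-2} by citing \cite[Lemma 2.3]{KN10}, noting the extra boundary term is harmless, whereas you derive the exact weighted-Dirichlet identity directly), followed by applying \eqref{estimate-1} to $\psi/\vp_{a}$ and the two-sided bounds $\inf\vp_{a}\leq\vp_{a}\leq\sup\vp_{a}$, yielding the identical constant.
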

\begin{proof}
Denoted by $\vp_{a,M}=\vp_{a}|_{\Om_{M}}$ the ground state of $H_{a,\Om_{M}}$. We have
\begin{equation}\label{estimate-2}
\bigg\|\nabla\bigg(\frac{\psi}{\vp_{a,M}}\bigg)\bigg\|^{2}\leq\frac{1}{(\inf\vp_{a})^{2}}\big[Q_{M}(\psi,\psi)-E_{0}\|\psi\|^{2}\big],\quad\psi\in H^{1}(\Om_{M}).
\end{equation}
This follows from the arguments as in the proof of \cite[Lemma 2.3]{KN10} by means of the ground state transform. Here, one more boundary term is involved, but this does not cause any trouble.

For $\psi\in H^{1}(\Om_{M})$, we apply \eqref{estimate-1} with $\frac{\psi}{\vp_{a,M}}$ and use \eqref{estimate-2} to obtain
\begin{equation*}
\begin{split}
\frac{4}{M(M+1)}\|\psi\|^{2}&\leq(\sup\vp_{a})^{2}\frac{4}{M(M+1)}\bigg\|\frac{\psi}{\vp_{a,M}}\bigg\|^{2}\\
&\leq\bigg(\frac{\sup\vp_{a}}{\inf\vp_{a}}\bigg)^{2}\frac{4}{M}\|\Ga_{M}\psi\|^{2}+\big(\sup\vp_{a}\big)^{2}\bigg\|\nabla\bigg(\frac{\psi}{\vp_{a,M}}\bigg)\bigg\|^{2}\\
&\leq\bigg(\frac{\sup\vp_{a}}{\inf\vp_{a}}\bigg)^{2}\frac{4}{M}\|\Ga_{M}\psi\|^{2}+\bigg(\frac{\sup\vp_{a}}{\inf\vp_{a}}\bigg)^{2}\big[Q_{M}(\psi,\psi)-E_{0}\|\psi\|^{2}\big],
\end{split}
\end{equation*}
which leads to the result.
\end{proof}

Set $\al=\inf\si(P_{0})$. Suppose $\al>E_{0}$ as in Theorem \ref{thm-lower-bound-estimate}. Let $\Ga_{\Om_{0}}:H^{1}(\Om_{0})\ra L^{2}(S)$ be the trace operator defined in the same way as that of $\Ga_{M}$. Let $\la<\al$. As in \cite{KN10}, we consider the following eigenvalue problems
\begin{equation}\label{eigen-problem}
\begin{split}
\left\{\begin{aligned}
&(-\De+W_{0})\psi=\la\psi\quad\text{in}\,\,\Om_{0},\\
&\Ga_{\Om_{0}}\psi=g\in H^{3/2}(S),\\
&(\textbf{n}_{\Om_{0}}\cdot\nabla+\chi_{\Om_{0}})\psi=0\quad\text{on}\,\,\pa\Om_{0}\backslash S.
\end{aligned} \right.
\end{split}
\end{equation}
By standard arguments of the theory of elliptic boundary value problems (see e.g. \cite{Fo95}), the eigenvalue problem \eqref{eigen-problem} have a unique solution $\psi\in H^{2}(\Om_{0})$ for any $g\in H^{3/2}(S)$. Moreover,
\begin{equation*}
T_{\la}:H^{3/2}(S)\ra H^{1/2}(S),\quad g\mapsto\Ga_{\Om_{0}}(\pa_{d}\psi)
\end{equation*}
defines a bounded linear operator.

\begin{lem}\label{lem-estimate}
\begin{itemize}
\item[\rm(i)] $T_{\la}$ is symmetric for any $\la<\al$.
\item[\rm(ii)] Let $\la_{0}\in[E_{0},\al)$. There's some $\ep=\ep(\la_{0})$ such that
\begin{equation*}
\lan g,T_{\la}g\ran+\int_{S}\chi_{\Om_{0}}|g|^{2}\geq\ep\|g\|^{2},\quad g\in H^{3/2}(S)
\end{equation*}
for all  $\la\in[E_{0},\la_{0}]$.
\end{itemize}
\end{lem}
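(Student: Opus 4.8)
\textbf{Proof proposal for Lemma \ref{lem-estimate}.}

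The plan is to analyze the Dirichlet-to-Neumann type operator $T_\la$ associated to the boundary value problem \eqref{eigen-problem} on $\Om_0$, exploiting that $\la$ stays strictly below $\al=\inf\si(P_0)$. For part $\rm(i)$, I would take two boundary data $g_1,g_2\in H^{3/2}(S)$ with corresponding solutions $\psi_1,\psi_2\in H^2(\Om_0)$ of \eqref{eigen-problem}, and compute the difference of the two pairings $\lan g_1,T_\la g_2\ran-\lan T_\la g_1,g_2\ran$ via Green's identity on $\Om_0$. The bulk terms $\int_{\Om_0}\big(\ol{\psi_1}(-\De+W_0-\la)\psi_2-\ol{(-\De+W_0-\la)\psi_1}\,\psi_2\big)$ vanish because both $\psi_i$ solve the PDE; the boundary contribution splits into an integral over $S$ and one over $\pa\Om_0\setminus S$. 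On $S$ the difference of normal-derivative terms is exactly $\lan g_1,T_\la g_2\ran-\lan T_\la g_1,g_2\ran$ (up to sign conventions for $\pa_d$ versus the outer normal), while on $\pa\Om_0\setminus S$ the Robin condition $(\mathbf n_{\Om_0}\cdot\nabla+\chi_{\Om_0})\psi_i=0$ makes each normal derivative equal to $-\chi_{\Om_0}\psi_i$, so the two terms cancel against each other since $\chi_{\Om_0}$ is real. Hence $T_\la$ is symmetric.

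For part $\rm(ii)$, I would write the quadratic form identity obtained by testing the PDE against $\psi$ itself: integrating $\ol\psi(-\De+W_0-\la)\psi=0$ over $\Om_0$ and using Green's identity gives
\begin{equation*}
\|\nabla\psi\|_{L^2(\Om_0)}^2+\int_{\Om_0}(W_0-\la)|\psi|^2=\int_{\pa\Om_0}\ol\psi\,(\mathbf n_{\Om_0}\cdot\nabla\psi)=-\int_{\pa\Om_0\setminus S}\chi_{\Om_0}|\psi|^2+\int_S\ol\psi\,\pa_d\psi,
\end{equation*}
so that $\lan g,T_\la g\ran+\int_S\chi_{\Om_0}|g|^2$ equals the full quadratic form of $P_0$ evaluated at $\psi$ minus $\la\|\psi\|^2$, i.e. it equals $\lan\psi,(P_0-\la)\psi\ran$ (with the $S$-boundary term of $\chi_{\Om_0}$ now folded in on the left). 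Since $\la\le\la_0<\al=\inf\si(P_0)$, we get $\lan g,T_\la g\ran+\int_S\chi_{\Om_0}|g|^2\ge(\al-\la_0)\|\psi\|_{L^2(\Om_0)}^2$. To convert the right-hand side into $\|g\|_{L^2(S)}^2$ I would invoke a trace/elliptic-estimate bound: the solution map $g\mapsto\psi$ of \eqref{eigen-problem} is bounded from $H^{3/2}(S)\supset L^2(S)$ into $H^2(\Om_0)$ uniformly for $\la\in[E_0,\la_0]$ (the resolvent $(P_0-\la)^{-1}$ and the Poisson operator depend continuously on $\la$ on this compact interval that avoids the spectrum), and by the trace theorem $\|g\|_{L^2(S)}\le C\|\psi\|_{H^1(\Om_0)}\le C'\|\psi\|_{L^2(\Om_0)}$ — the last step again using uniform ellipticity to control the $H^1$ norm by the $L^2$ norm plus a lower-order term that can be absorbed. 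This yields $\lan g,T_\la g\ran+\int_S\chi_{\Om_0}|g|^2\ge\ep\|g\|^2$ with $\ep=\ep(\la_0)>0$.

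The main obstacle is the uniformity in $\la$: one must ensure the implied constants in both the Poisson-operator bound and the trace inequality do not blow up as $\la$ ranges over $[E_0,\la_0]$. This is handled by noting $[E_0,\la_0]$ is compact and disjoint from $\si(P_0)$, so $\la\mapsto(P_0-\la)^{-1}$ is norm-continuous and hence bounded there, and the Poisson operator for \eqref{eigen-problem} can be written via this resolvent plus a fixed harmonic-extension operator; continuity and compactness then give a single constant. A secondary technical point is bookkeeping the sign of $\pa_d$ relative to the outer normal $\mathbf n_{\Om_0}$ on $S$ (on $S=(-\tfrac12,\tfrac12)^{d-1}\times\{-\tfrac12\}$ the outer normal of $\Om_0$ points in the $-x_d$ direction), but this only affects signs and is consistent with how $T_\la$ is paired against $\chi_{\Om_0}$ in the statement.
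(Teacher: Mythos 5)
Part \(\rm(i)\) is fine and is what the paper does (Green's formula). The first half of your argument for \(\rm(ii)\) is also exactly the paper's: testing the equation against \(\psi\) and integrating by parts gives \(\lan g,T_{\la}g\ran+\int_{S}\chi_{\Om_{0}}|g|^{2}=Q_{\Om_{0}}(\psi,\psi)-\la\|\psi\|^{2}\), where \(Q_{\Om_{0}}\) is the quadratic form of \(P_{0}\). The gap is in what you do next. You descend to \((\al-\la_{0})\|\psi\|_{L^{2}(\Om_{0})}^{2}\) and then try to recover \(\|g\|_{L^{2}(S)}^{2}\) via the chain \(\|g\|_{L^{2}(S)}\le C\|\psi\|_{H^{1}(\Om_{0})}\le C'\|\psi\|_{L^{2}(\Om_{0})}\). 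The second inequality is false: there is no reverse elliptic estimate bounding the \(H^{1}\) norm of a solution of \eqref{eigen-problem} by its \(L^{2}\) norm when the Dirichlet datum on \(S\) is arbitrary. Concretely, take \(g(x')=e^{ik\cdot x'}\) with \(|k|\) large; the solution decays like \(e^{-|k|\,\mathrm{dist}(x,S)}\) into \(\Om_{0}\), so \(\|\psi\|_{L^{2}(\Om_{0})}^{2}\sim|k|^{-1}\to0\) while \(\|g\|_{L^{2}(S)}^{2}=1\) and \(\|\nabla\psi\|^{2}\sim|k|\). This shows not only that \(\|\psi\|_{H^{1}}\le C'\|\psi\|_{L^{2}}\) fails, but that your intermediate quantity \((\al-\la_{0})\|\psi\|_{L^{2}(\Om_{0})}^{2}\) is genuinely \emph{not} bounded below by \(\ep\|g\|_{L^{2}(S)}^{2}\): by discarding the gradient term you have thrown away exactly the part of the form that controls the trace.

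The repair, which is the paper's route, is to keep the full form. Since \(Q_{\Om_{0}}\ge\al>\la_{0}\) on the form domain \(H^{1}(\Om_{0})\), the form \(Q_{\Om_{0}}-\la_{0}\) is strictly positive, and its form norm \(\|\sqrt{P_{0}-\la_{0}}\,\cdot\,\|\) is equivalent to the full \(\|\cdot\|_{H^{1}(\Om_{0})}\) norm; hence \(Q_{\Om_{0}}(\psi,\psi)-\la_{0}\|\psi\|^{2}\ge c\|\psi\|_{H^{1}(\Om_{0})}^{2}\), and the ordinary trace inequality \(\|\Ga_{\Om_{0}}\psi\|_{L^{2}(S)}^{2}\le C\|\psi\|_{H^{1}(\Om_{0})}^{2}\) finishes the proof. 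This also makes your concerns about uniformity of the Poisson operator over \(\la\in[E_{0},\la_{0}]\) moot: the only \(\la\)-dependence left is the trivial monotonicity \(-\la\ge-\la_{0}\), so no resolvent continuity argument is needed.
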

\begin{proof}
$\rm(i)$ is a simple consequence of Green's formula. To verify $\rm(ii)$, we let $g\in H^{3/2}(S)$ and $\psi\in H^{2}(\Om_{0})$ be the unique solution of \eqref{eigen-problem}. Thus, $\Ga_{\Om_{0}}\psi=g$, $T_{\la}g=\Ga_{\Om_{0}}(\pa_{d}\psi)$ and
\begin{equation*}
(-\De+W_{0})\psi=\la\psi\quad\text{in}\,\,\Om_{0}.
\end{equation*}
Using Green's formula, we calculate
\begin{equation*}
\begin{split}
0&=\lan\psi,(-\De+W_{0}-\la)\psi\ran\\
&=\int_{\Om_{0}}|\nabla\psi|^{2}-\int_{\pa\Om_{0}\backslash S}\ol{\psi}(\textbf{n}_{\Om_{0}}\cdot\nabla\psi)-\int_{S}\ol{\psi}(\pa_{d}\psi)+\int_{\Om_{0}}(W_{0}-\la)|\psi|^{2}\\
&=\int_{\Om_{0}}|\nabla\psi|^{2}+\int_{\pa\Om_{0}\backslash S}\chi_{\Om_{0}}|\psi|^{2}-\int_{S}\ol{g}T_{\la}g+\int_{\Om_{0}}(W_{0}-\la)|\psi|^{2},
\end{split}
\end{equation*}
which leads to
\begin{equation*}
\lan g,T_{\la}g\ran+\int_{S}\chi_{\Om_{0}}|\psi|^{2}=Q_{\Om_{0}}(\psi,\psi)-\la\|\psi\|^{2}\geq Q_{\Om_{0}}(\psi,\psi)-\la_{0}\|\psi\|^{2},
\end{equation*}
where $Q_{\Om_{0}}(\cdot,\cdot)$ is the quadratic form of $P_{0}$. Since $Q_{\Om_{0}}\geq\al>\la_{0}$, $Q_{\Om_{0}}-\la_{0}$ is strictly positive with form domain $H^{1}(\Om_{0})$ and the corresponding strictly positive self-adjoint operator is given by $P_{0}-\la_{0}$. Moreover, the domain of  $\sqrt{P_{0}-\la_{0}}$ is $H^{1}(\Om_{0})$. The strict positivity of $P_{0}-\la_{0}$ implies the equivalence of the norms $\|\sqrt{P_{0}-\la_{0}}\cdot\|$ and $\|\cdot\|_{H^{1}(\Om_{0})}$ on $H^{1}(\Om_{0})$. It then follows that
\begin{equation*}
\begin{split}
Q_{\Om_{0}}(\psi,\psi)-\la_{0}\|\psi\|^{2}&=\|\sqrt{P_{0}-\la_{0}}\psi\|^{2}\geq C\|\psi\|_{H^{1}(\Om_{0})}^{2}\geq\ep\|\Ga_{\Om_{0}}\psi\|^{2}
\end{split}
\end{equation*}
for some $C,\ep>0$, where the last inequality is due to the boundedness of the trace operator. This completes the proof.
\end{proof}

We now prove Theorem \ref{thm-lower-bound-estimate}.

\begin{proof}[Proof of Theorem \ref{thm-lower-bound-estimate}]
Fix any $M\in2\N_{0}+1$. Let $\psi_{0M}$ be the strictly positive ground state of $P_{0M}$ with the ground state energy $\la_{0M}$. Since $\inf\si(P_{0})>E_{0}$ and $\inf\si(H_{a,\Om_{M}})=E_{0}$, we conclude from Lemma \ref{app-lem-bracketing} that $\la_{0M}\geq E_{0}$.

We first prove that the theorem holds for all not-very-large $M$. To do so, it suffice to show $\la_{0M}>E_{0}$ for all $M\in2\N_{0}+1$. Denote by $\psi^{\Om_{0}}_{0M}$ and $\psi^{\Om_{M}}_{0M}$ the restrictions of $\psi_{0M}$ to $\Om_{0}$ and $\Om_{M}$, respectively. Then, the strict positivity of $\psi_{0M}$ implies that $\|\psi^{\Om_{0}}_{0M}\|>0$ and $\|\psi^{\Om_{M}}_{0M}\|>0$. By Lemma \ref{app-lem-bracketing}, we have
\begin{equation*}
\begin{split}
\la_{0M}\|\psi_{0M}\|^{2}&=\lan\psi_{0M},P_{0M}\psi_{0M}\ran\\
&=\lan\psi^{\Om_{0}}_{0M},P_{0}\psi^{\Om_{0}}_{0M}\ran+\lan\psi^{\Om_{M}}_{0M},H_{a,\Om_{M}}\psi^{\Om_{M}}_{0M}\ran\\
&>E_{0}\|\psi^{\Om_{0}}_{0M}\|^{2}+E_{0}\|\psi^{\Om_{M}}_{0M}\|^{2}\\
&=E_{0}\|\psi_{0M}\|^{2},
\end{split}
\end{equation*}
which leads to the result.

We now prove the theorem for all large $M$. We assume w.l.o.g that there's some $\la_{0}\in(E_{0},\al)$ such that $\la_{0M}\in(E_{0},\la_{0})$. Since $\psi_{0M}$ satisfies the equation $(-\De+W_{0})\psi_{0M}=\la_{0M}\psi_{0M}$ in $\Om_{0}$ and Mezincescu boundary condition on $\pa\Om_{0}\backslash S$, we conclude that $\psi^{\Om_{0}}_{0M}$ is the unique solution to the problem \eqref{eigen-problem} with $g$ replaced by $\Ga_{\Om_{0}}\psi^{\Om_{0}}_{0M}\in H^{3/2}(S)$. Hence,
\begin{equation}\label{equality-on-S}
T_{\la}(\Ga_{\Om_{0}}\psi^{\Om_{0}}_{0M})=\Ga_{\Om_{0}}(\pa_{d}\psi^{\Om_{0}}_{0M}).
\end{equation}
Using Green's formula, we calculate
\begin{equation*}
\begin{split}
&\int_{\Om_{M}}\ol{\psi_{0M}}(P_{0M}\psi_{0M})\\
&\quad\quad=\int_{\Om_{M}}|\nabla\psi_{0M}|^{2}-\int_{\pa\Om_{M}\backslash S}\ol{\psi}(\textbf{n}_{\Om_{M}}\cdot\nabla\psi_{0M})+\int_{S}\ol{\psi_{0M}}(\pa_{d}\psi_{0M})+\int_{\Om_{M}}W_{M}|\psi_{0M}|^{2}\\
&\quad\quad=\int_{\Om_{M}}|\nabla\psi_{0M}|^{2}+\int_{\pa\Om_{M}}\chi_{\Om_{M}}|\psi_{0M}|^{2}+\int_{\Om_{M}}W_{M}|\psi_{0M}|^{2}\\
&\quad\quad\quad\quad+\int_{S}\ol{\Ga_{\Om_{0}}\psi^{\Om_{0}}_{0M}}T_{\la_{0M}}(\Ga_{\Om_{0}}\psi^{\Om_{0}}_{0M})+\int_{S}(-\chi_{\Om_{M}})|\psi^{\Om_{0}}_{0M}|^{2},
\end{split}
\end{equation*}
where we used the fact $\textbf{n}_{\Om_{M}}\cdot\nabla=-\pa_{d}$ on $S$ in the first equality and \eqref{equality-on-S} in the second equality. Since $\textbf{n}_{\Om_{M}}=-\textbf{n}_{\Om_{0}}$ on $S$, we have $\chi_{\Om_{M}}=-\chi_{\Om_{0}}$ on $S$, and hence, by Lemma \ref{lem-estimate}$\rm(ii)$,
\begin{equation*}
\la_{0M}\|\psi^{\Om_{M}}_{0M}\|^{2}=\int_{\Om_{M}}\ol{\psi_{0M}}(P_{0M}\psi_{0M})\geq Q_{M}(\psi^{\Om_{M}}_{0M},\psi^{\Om_{M}}_{0M})+\ep\|\Ga_{M}\psi^{\Om_{M}}_{0M}\|^{2},
\end{equation*}
where we used the obvious fact that $\Ga_{M}\psi^{\Om_{M}}_{0M}=\Ga_{\Om_{0}}\psi^{\Om_{0}}_{0M}$. We now apply Lemma \ref{lem-estimate-1} to conclude that for all large $M\in2\N_{0}+1$
\begin{equation*}
\la_{0M}\|\psi^{\Om_{M}}_{0M}\|^{2}\geq E_{0}\|\psi^{\Om_{M}}_{0M}\|^{2}+\frac{C}{M^{2}}\|\psi^{\Om_{M}}_{0M}\|^{2}
\end{equation*}
for some $C>0$. The theorem follows since $\|\psi^{\Om_{M}}_{0M}\|>0$. This completes the proof.
\end{proof}

\subsection{Non-Inherited Ground State Energy}\label{subsec-non-inherited-gse}

We prove Theorem \ref{thm-lower-bound-gse-main} in the case $W_{\Om_{0M}}1_{\Om_{M}}=b\sum_{i\in\Z^{d}\cap\Om_{M}}u(\cdot-i)$. Consider operators defined as follows: let $P_{0}$ be the same as in the Subsection \ref{subsec-inherited-gse}; for $M\in2\N_{0}+1$, let $\Om_{M}=(-\frac{1}{2},\frac{1}{2})^{d-1}\times(-\frac{1}{2},\frac{M}{2})$ and set
\begin{equation*}
H_{b,\Om_{M}}=-\De_{\Om_{M}}+W_{M}^{*}=-\De_{\Om_{M}}+V_{0}1_{\Om_{M}}+b\sum_{i\in\Z^{d}\cap\Om_{M}}u(\cdot-i);
\end{equation*}
let $\Om_{0M}=\text{int}(\ol{\Om_{0}\cup\Om_{M}})$ and set
\begin{equation*}
P_{0M}^{*}=-\De_{\Om_{0M}}+W_{0M}^{*},
\end{equation*}
where $W_{0M}^{*}=W_{0}1_{\Om_{0}}+W_{M}^{*}1_{\Om_{M}}$.

Note $H_{b,\Om_{M}}$ is the operator $H_{b}$ restricted to $\Om_{M}$ with Mezincescu boundary condition. Let $E_{M}^{*}$ and $\vp_{M}^{*}$ be the ground state energy and the strictly positive ground state, respectively, of $H_{b,\Om_{M}}$. Since the Mezincescu boundary condition are defined via $\vp_{a}$, $E_{M}^{*}$ is not inherited from $\inf\si(H_{b})$, and, from Lemma \ref{app-lem-bracketing}, we can only conclude that $E_{M}^{*}\geq E_{0}$ for all $m\in2\N_{0}+3$.

Theorem \ref{thm-lower-bound-gse-main} in the case $W_{\Om_{0M}}1_{\Om_{M}}=b\sum_{i\in\Z^{d}\cap\Om_{M}}u(\cdot-i)$ is restated as

\begin{thm}\label{thm-lower-bound-estimate-1}
If $\inf\si(P_{0})>E_{0}$, then there exists some $M$-independent constant $C>0$ such that
\begin{equation*}
\inf\si(P_{0M}^{*})\geq E_{0}+\frac{C}{M^{2}}
\end{equation*}
for all $M\in2\N_{0}+1$.
\end{thm}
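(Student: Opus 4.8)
The plan is to mimic the structure of the proof of Theorem \ref{thm-lower-bound-estimate}, now with $H_{b,\Om_{M}}$ in place of $H_{a,\Om_{M}}$. The essential new difficulty is that the Mezincescu boundary condition on $\Om_{M}$ is still defined via $\vp_{a}$, so the ground state $\vp_{M}^{*}$ of $H_{b,\Om_{M}}$ is \emph{not} $\vp_{a}|_{\Om_{M}}$ and $E_{M}^{*}\ge E_{0}$ is no longer an equality. First I would split, exactly as before, into the regime of ``not-very-large'' $M$ and the regime of ``large'' $M$. For the first regime, a bracketing argument via Lemma \ref{app-lem-bracketing} shows that the ground state energy $\la_{0M}^{*}$ of $P_{0M}^{*}$ satisfies $\la_{0M}^{*}\ge E_{0}$, and strict inequality follows provided we know $E_{M}^{*}>E_{0}$ for every fixed $M$, or, failing that, provided the bracketing between $P_{0}$ (with $\inf\si(P_{0})>E_{0}$) and $H_{b,\Om_{M}}$ forces strict positivity of at least one piece; since the ground state of $P_{0M}^{*}$ is strictly positive, its restrictions to $\Om_{0}$ and $\Om_{M}$ are both nonzero, and the $\Om_{0}$-piece already gives a strictly-larger-than-$E_{0}$ contribution, which yields $\la_{0M}^{*}>E_{0}$ for each fixed $M$.

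For the large-$M$ regime, I would reuse the operators $T_{\la}$ and the trace estimate on $S$ from Lemma \ref{lem-estimate} unchanged (they involve only $P_{0}$ and $\Om_{0}$, which are the same here), and redo the Green's-formula computation on $\Om_{M}$ with $H_{b,\Om_{M}}$: writing $\psi_{0M}^{*}$ for the strictly positive ground state of $P_{0M}^{*}$ with energy $\la_{0M}^{*}\in(E_{0},\la_{0})$, the same cancellation of boundary terms on $S$ (using $\chi_{\Om_{M}}=-\chi_{\Om_{0}}$ on $S$ and $T_{\la_{0M}^{*}}(\Ga_{\Om_{0}}\psi_{0M}^{*,\Om_{0}})=\Ga_{\Om_{0}}(\pa_{d}\psi_{0M}^{*,\Om_{0}})$) plus Lemma \ref{lem-estimate}$\rm(ii)$ gives
\begin{equation*}
\la_{0M}^{*}\|\psi_{0M}^{*,\Om_{M}}\|^{2}\ge Q_{M}^{*}(\psi_{0M}^{*,\Om_{M}},\psi_{0M}^{*,\Om_{M}})+\ep\|\Ga_{M}\psi_{0M}^{*,\Om_{M}}\|^{2},
\end{equation*}
where $Q_{M}^{*}$ is the quadratic form of $H_{b,\Om_{M}}$. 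The remaining task is a Poincar\'e-type lower bound for $Q_{M}^{*}$ analogous to Lemma \ref{lem-estimate-1}, but now the ground-state transform must be performed against $\vp_{M}^{*}$ rather than $\vp_{a}|_{\Om_{M}}$: one has $\|\nabla(\psi/\vp_{M}^{*})\|^{2}\le (\inf\vp_{M}^{*})^{-2}[Q_{M}^{*}(\psi,\psi)-E_{M}^{*}\|\psi\|^{2}]\le(\inf\vp_{M}^{*})^{-2}[Q_{M}^{*}(\psi,\psi)-E_{0}\|\psi\|^{2}]$ since $E_{M}^{*}\ge E_{0}$, and then \eqref{estimate-1} applied to $\psi/\vp_{M}^{*}$ produces a constant of the form $(\inf\vp_{M}^{*}/\sup\vp_{M}^{*})^{2}\cdot 4/(M(M+1))$. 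Combining this with the trace estimate exactly as in the proof of Theorem \ref{thm-lower-bound-estimate} yields $\la_{0M}^{*}\ge E_{0}+C/M^{2}$ for large $M$, and together with the first regime this proves the theorem.

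The main obstacle is that the constant produced this way depends on $\inf\vp_{M}^{*}$ and $\sup\vp_{M}^{*}$, which a priori vary with $M$, so one must verify a uniform-in-$M$ two-sided bound $0<c_{1}\le\vp_{M}^{*}\le c_{2}<\infty$ on $\Om_{M}$. I would obtain this from elliptic regularity and Harnack-type estimates for $H_{b}$: since $V_{0}$ and $u$ are bounded below (H4) and in $L^{p}_{\rm loc}$ with $p>d$, the normalized ground states $\vp_{M}^{*}$ satisfy a Harnack inequality with $M$-independent constants on unit cubes interior to $\Om_{M}$, and the Mezincescu (Robin) boundary condition on the lateral boundary—coming from the fixed periodic function $\vp_{a}$—is uniformly controlled, so the oscillation of $\vp_{M}^{*}$ is bounded independently of $M$; normalizing, say, on the cube $\CC_{(0,0)}$ then pins down both bounds. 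One must also check that the quasi-one-dimensional/Poincar\'e inequality \eqref{estimate-1} and the ground-state transform go through with the extra lateral Robin term, which the authors already flag as harmless in the proof of Lemma \ref{lem-estimate-1}; the argument is identical here. With the uniform bounds on $\vp_{M}^{*}$ in hand, every constant in the chain is $M$-independent, and the proof concludes.
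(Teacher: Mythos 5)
Your overall skeleton matches the paper's: split into small and large $M$, reuse Lemma \ref{lem-estimate} and the Green's formula computation on $S$, and run the ground state transform against $\vp_{M}^{*}$ instead of $\vp_{a}|_{\Om_{M}}$, which produces the factor $\big(\inf\vp_{M}^{*}/\sup\vp_{M}^{*}\big)^{2}$ in the Poincar\'{e}-type bound. You also correctly identify that the whole proof hinges on bounding this ratio away from zero uniformly in $M$. But your proposed way of getting that uniform bound --- interior Harnack estimates plus control of the Robin boundary term --- does not work. Harnack gives comparability of $\vp_{M}^{*}$ on each unit cube with an $M$-independent constant, but to compare values at the two ends of $\Om_{M}$ you must chain the inequality across $O(M)$ cubes, which yields a bound of the form $C^{M}$, not a constant. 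And this is not merely a technical loss: Lemma \ref{lem-key} of the paper shows that when $E_{M}^{*}=E_{0}$ the ground state satisfies exactly $\vp_{M}^{*}|_{\CC_{(0,r)}}=\nu^{r}\vp_{M}^{*}|_{\CC_{(0,0)}}(\cdot-(0,r))$, so if $\nu\neq1$ the ratio $\inf\vp_{M}^{*}/\sup\vp_{M}^{*}$ genuinely decays like $\nu^{\pm M/2}$. Your argument would ``prove'' the theorem without hypothesis $\rm(H6)$, which is precisely the assumption ($\nu=1$) the paper introduces to rescue this step.

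The paper's actual route, which your proposal is missing, is the dichotomy of Lemma \ref{lem-key}: either (ii) $E_{M}^{*}\geq E_{0}+\de$ uniformly in $M$, in which case bracketing alone gives $\inf\si(P_{0M}^{*})\geq\min\{\inf\si(P_{0}),E_{M}^{*}\}\geq E_{0}+\de^{*}$ and no quasi-one-dimensional estimate is needed at all; or (i) $E_{M}^{*}=E_{0}$ for all $M$, in which case the explicit translation structure of $\vp_{M}^{*}$ holds, and $\nu=1$ --- guaranteed by $\rm(H6)$, or derived in Lemma \ref{lem-key-2} when $E_{\vp_{b}}(a)=E_{\vp_{b}}(b)$ --- makes $\vp_{M}^{*}$ periodic in the $d$-th direction, so $\inf\vp_{M}^{*}/\sup\vp_{M}^{*}$ is computed on a single cube and is manifestly $M$-independent. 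You need to prove (or at least invoke) this structure result; the analytic estimates you wrote down are otherwise fine and coincide with the paper's.
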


To prove Theorem \ref{thm-lower-bound-estimate-1}, we need the following two lemmas refining $E_{M}^{*}\geq E_{0}$ for all $m\in2\N_{0}+3$.

\begin{lem}\label{lem-key}
There holds the alternative: either
\begin{itemize}
\item[\rm(i)] $E_{M}^{*}=E_{0}$ for all $M\in2\N_{0}+3$, or
\end{itemize}
\begin{itemize}
\item[\rm(ii)] there's some $\de>0$ such that $E_{M}^{*}\geq E_{0}+\de$ for all $M\in2\N_{0}+3$.
\end{itemize}
Moreover, if $\rm(i)$ is satisfied, then there exists some constant $\nu>0$ such that
\begin{equation*}
\vp_{M}^{*}|_{\CC_{(0,r)}}=\nu^{r}\vp_{M}^{*}|_{\CC_{(0,0)}}(\cdot-(0,r)),\quad r=0,1\dots,\frac{M-1}{2}
\end{equation*}
for all $M\in2\N_{0}+1$. In particular, if $\nu=1$, then there exist $0<c_{1}<c_{2}\leq1$ such that $\frac{\inf\vp_{M}^{*}}{\sup\vp_{M}^{*}}\in[c_{1},c_{2}]$ for all $M\in2\N_{0}+1$
\end{lem}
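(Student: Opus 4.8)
The plan is to analyze the periodic structure of the operator $H_{b,\Om_M}$ in the long direction. The key observation is that $H_{b,\Om_M}$ is the restriction of the fully periodic operator $H_b$ to the slab $\Om_M = (-\frac12,\frac12)^{d-1}\times(-\frac12,\frac M2)$, but with \emph{Mezincescu boundary conditions defined via $\vp_a$}, not via $\vp_b$. On the long faces $(-\frac12,\frac12)^{d-1}\times\{-\frac12\}$ and $\times\{\frac M2\}$ the boundary function $\chi$ is built from $\vp_a$, while on the remaining (short, lateral) faces the periodicity of $\vp_a$ together with Lemma \ref{app-lem-bracketing}(ii) makes the boundary terms on adjacent cells cancel. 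Thus, by the bracketing of Lemma \ref{app-lem-bracketing}(iii) applied cell-by-cell in the $d$-th direction, $H_{b,\Om_M}$ decomposes in a transfer-matrix-like fashion; I would exploit exactly this to get the multiplicative structure of the ground state.

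First I would record that, by Lemma \ref{app-lem-bracketing} applied to the unit cells $\CC_{(0,r)}$, $r=0,\dots,\frac{M-1}{2}$, stacked along the $d$-th axis, the quadratic form of $H_{b,\Om_M}$ splits into a sum of the cell forms plus interior boundary terms which cancel pairwise (Lemma \ref{app-lem-bracketing}(ii)). Since $\vp_M^*$ is the strictly positive ground state with $H_{b,\Om_M}\vp_M^*=E_M^*\vp_M^*$ and it satisfies Mezincescu boundary conditions on $\pa\Om_M$, each restriction $\vp_M^*|_{\CC_{(0,r)}}$ solves $H_b\psi=E_M^*\psi$ on the cell with Mezincescu (via $\vp_a$) conditions on the lateral faces and matching (continuity of $\psi$ and of the co-normal derivative relative to $\vp_a$) conditions across the internal horizontal interfaces. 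Comparing the problem on $\CC_{(0,r)}$ with that on $\CC_{(0,0)}$ after the translation $x\mapsto x-(0,r)$: both solve the same elliptic equation with the same lateral boundary conditions (by $\Z^d$-periodicity of $V_0$ and $\vp_a$), so they differ only through the Dirichlet-to-Neumann data prescribed on the two horizontal faces. I would then argue by an induction in $r$ using the transfer operator: the solution on the stack is determined by the data on the bottom face $S$, and the "propagation" across one cell is a fixed (independent of $M$ and $r$) map. Uniqueness of the positive ground state forces $\vp_M^*|_{\CC_{(0,r+1)}}$ to be a \emph{scalar} multiple of the translate of $\vp_M^*|_{\CC_{(0,r)}}$, with a single ratio $\nu>0$ that is the same at every step; this is where the simplicity (one-dimensionality) of the ground eigenspace is essential.

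Next I would establish the dichotomy. If $\rm(i)$ holds, i.e. $E_M^*=E_0$ for all $M$, then by the multiplicative relation just proved, $\vp_M^*$ on the stack is $\sum_r \nu^r (\text{fixed profile})(\cdot-(0,r))$ up to normalization. When $\nu=1$ the profile on every cell is an exact translate of a single fixed function, so $\inf\vp_M^*$ and $\sup\vp_M^*$ are attained by that fixed cell-profile and are therefore $M$-independent; hence $\frac{\inf\vp_M^*}{\sup\vp_M^*}$ is a fixed number in $(0,1]$ (it is $\le 1$ trivially, and $>0$ by strict positivity and continuity on the closed cell), which gives the stated $[c_1,c_2]$. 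For the dichotomy itself: either $E_M^*=E_0$ for every odd $M$ (case (i)), or there is some odd $M_0$ with $E_{M_0}^*>E_0$; I would then show the latter forces a uniform gap for all $M$. By the bracketing $H_{b,\Om_{M+2}} \ge H_{b,\Om_{M_0}} \oplus (\text{remainder})$ after cutting out a copy of $\Om_{M_0}$, together with $E_M^*\ge E_0$ always, one propagates the strict inequality; more robustly, by monotonicity/continuity of $E_M^*$ in $M$ via the transfer-operator spectral radius (the cell transfer map has a leading eigenvalue whose position relative to $E_0$ is monotone), a single $M_0$ with strict inequality yields a $\de>0$ bound for all $M$. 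I would isolate this as the analytic core: the map $M\mapsto E_M^*$ either sits identically at $E_0$ or is bounded away from it uniformly.

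The main obstacle I expect is making the "transfer operator across one cell" argument rigorous enough to conclude both the exact scalar relation $\vp_M^*|_{\CC_{(0,r)}}=\nu^r\vp_M^*|_{\CC_{(0,0)}}(\cdot-(0,r))$ with an $M$-independent $\nu$, \emph{and} the all-or-nothing dichotomy. The subtlety is that the relevant boundary-value problem on a cell (Mezincescu on laterals, Dirichlet data on one horizontal face, looking for the co-normal data on the other) is only well-posed for energies below the corresponding lateral-problem spectrum, exactly as in Lemma \ref{lem-estimate}; one must check that all energies $E_M^*\in[E_0,\alpha)$ stay in the good range (which they do, since $E_M^*\le \inf\si(H_{b,\CC_0}) $ is controlled and $\alpha>E_0$ is fixed). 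Granting that, the positive solution is unique, the Perron--Frobenius-type argument gives the constant ratio, and the spectral radius of the one-cell transfer map—compared against the value $E_0$—delivers the dichotomy. I would allocate most of the write-up to verifying well-posedness of \eqref{eigen-problem}-type problems on the single cell and to the Perron--Frobenius step that pins down $\nu$ independently of $r$ and $M$.
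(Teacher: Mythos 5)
There is a genuine gap, and it sits exactly in the dichotomy. You negate (i) as ``there exists $M_0$ with $E_{M_0}^*>E_0$'' and try to propagate the strict inequality to all $M$, first by the bracketing $H_{b,\Om_M}\geq H_{b,\Om_{M_0}}\oplus(\text{remainder})$ and then by an unproved ``monotonicity of $E_M^*$ via a transfer-operator spectral radius''. Neither works: the bracketing only yields $E_M^*\geq\min\{E_{M_0}^*,\inf\si(\text{remainder})\}$, and the remainder is only known to satisfy $\inf\si\geq E_0$, so no strict inequality survives; it also says nothing about $M<M_0$, and it produces no $\de$ uniform in $M$. The paper argues the contrapositive: if some $E_{M_0}^*=E_0$, then (downward) the identity $\lan\vp_{M_0}^*,H_{b,\Om_{M_0}}\vp_{M_0}^*\ran=E_0\|\vp_{M_0}^*\|^2$ combined with Lemma \ref{app-lem-bracketing} forces each sub-stack restriction of $\vp_{M_0}^*$ to realize the minimum of the corresponding sub-stack operator, whence $E_M^*=E_0$ for $M<M_0$; and (upward) the multiplicative structure of $\vp_{M_0}^*$ lets one explicitly construct a positive eigenfunction at energy $E_0$ on any longer stack. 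Only after this ``all or nothing'' step does one get the uniform gap in the complementary case, by chopping $\Om_M$ into blocks of $2$ or $3$ adjacent cubes so that $\de=\min\{\inf\si(H_{b,\Om_3}),\inf\si(H_{b,\Om_5})\}-E_0$. This decomposition, which is what makes (ii) uniform in $M$, is absent from your proposal.

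Second, your route to the scalar relation $\vp_M^*|_{\CC_{(0,r+1)}}=\nu\,\vp_M^*|_{\CC_{(0,r)}}(\cdot-(0,1))$ --- a fixed one-cell ``propagation map'' plus a Perron--Frobenius argument --- overclaims. A fixed transfer map does not make the restriction of the global ground state a scalar multiple of its own translate unless that restriction is the leading eigenvector of the map, and for $E_M^*>E_0$ it is not (already in $d=1$ the ground state of a finite stack with Robin-type conditions at both ends is a nontrivial combination of the two Floquet solutions, so consecutive period restrictions are not proportional). The scalar relation is a \emph{consequence} of $E_M^*=E_0$: bracketing gives $\lan\vp_i,H_{b,\CC_i}\vp_i\ran\geq E_0\|\vp_i\|^2$ for each cell restriction $\vp_i$ with the total equal to $E_0\|\vp_M^*\|^2$, so each $\vp_i$ attains the cell minimum; by simplicity of the cell ground state and the unitary equivalence of the $H_{b,\CC_i}$, each $\vp_i$ is the cell ground state up to a positive scalar, and continuity across the interfaces pins the ratio $\nu$ down independently of $r$ and $M$. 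Your proposal instead uses the multiplicative structure as a general input to derive the dichotomy, which inverts the logic; as stated, both halves of your argument rest on an unsubstantiated claim.
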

\begin{proof}
Clearly, $E_{1}^{*}=E_{\vp_{a}}(b)=E_{0}$. We first claim that either $\rm(1)$ $E_{M}^{*}=E_{0}$ for all $M\in2\N_{0}+3$, or $\rm(2)$ $E_{M}^{*}>E_{0}$ for all $M\in2\N_{0}+3$. For contradiction, suppose $\rm(2)$ fails, that is, there's some $M_{0}\in2\N_{0}+3$ such that $E_{M_{0}}^{*}=E_{0}$, and show $\rm(1)$ holds. 

We first show that $E_{M}^{*}=E_{0}$ for all $M\in2\N_{0}+3$ satisfying $M<M_{0}$. Fix any such an $M$. Let $\Om_{-}=\Om_{M}=(-\frac{1}{2},\frac{1}{2})^{d-1}\times(-\frac{1}{2},\frac{M}{2})$ and $\Om_{+}=(-\frac{1}{2},\frac{1}{2})^{d-1}\times(\frac{M}{2},\frac{M_{0}}{2})$. Set $\vp_{\pm}=\vp_{M_{0}}^{*}|_{\Om_{\pm}}$. By Lemma \ref{app-lem-bracketing}, we find
\begin{equation*}
\lan\vp_{-},H_{b,\Om_{-}}\vp_{-}\ran+\lan\vp_{+},H_{b,\Om_{+}}\vp_{+}\ran=\lan\vp_{M_{0}}^{*},H_{b,\Om_{M_{0}}}\vp_{M_{0}}^{*}\ran=E_{0}\|\vp_{-}\|^{2}+E_{0}\|\vp_{+}\|^{2}.
\end{equation*}
Since $\inf\si(H_{b,\Om_{-}})\geq E_{0}$ and $\inf\si(H_{b,\Om_{+}})\geq E_{0}$, there holds
\begin{equation*}
\lan\vp_{-},H_{b,\Om_{-}}\vp_{-}\ran=E_{0}\|\vp_{-}\|^{2}\quad\text{and}\quad\lan\vp_{+},H_{b,\Om_{+}}\vp_{+}\ran=E_{0}\|\vp_{+}\|^{2}.
\end{equation*}
Variational principle and the uniqueness of ground state then yield that $\vp_{-}$ is the ground state of $H_{b,\Om_{-}}$, which leads to $E_{M}^{*}=E_{0}$ and the claim follows.

We next show that $E_{M_{0}+1}^{*}=E_{0}$. For $i\in\Z^{d}\cap\Om_{M_{0}}$, we set $\vp_{i}=\vp_{M_{0}}^{*}|_{\CC_{i}}$. Similar arguments as above show that $\vp_{i}$ is the ground state of $H_{b,\CC_{i}}$. Since $H_{b,\CC_{i}}$, $i\in\Z^{d}\cap\Om_{M_{0}}$ are unitarily equivalent, $\vp_{i}$, $i\in\Z^{d}\cap\Om_{M_{0}}$ are all same up to translations and multiplication by positive scalars. In particular, there's some constant $\nu>0$ such that
\begin{equation*}
\vp_{M_{0}}^{*}|_{\CC_{(0,0)}}=\nu\vp_{M_{0}}^{*}|_{\CC_{(0,0)}}(\cdot-(0,1)).
\end{equation*}
We now define the continuous function $\tilde{\vp}_{M_{0}}^{*}:\Om_{M_{0}}\ra(0,\infty)$ by setting
\begin{equation}\label{construction-gs}
\begin{split}
&\tilde{\vp}_{M_{0}}^{*}|_{\CC_{(0,0)}}=\vp_{M_{0}}^{*}|_{\CC_{(0,0)}},\\
&\tilde{\vp}_{M_{0}}^{*}|_{\CC_{(0,r)}}=\nu^{r}\tilde{\vp}_{M}^{*}|_{\CC_{(0,0)}}(\cdot-(0,r)),\quad r=1\dots,\frac{M-1}{2}.
\end{split}
\end{equation}
Then, $\tilde{\vp}_{M_{0}}^{*}$ is the ground state of $H_{b,\Om_{M_{0}}}$. By uniqueness, there holds $\vp_{M_{0}}^{*}=\tilde{\vp}_{M_{0}}^{*}$. Therefore, we can easily construct a ground state of $H_{b,\Om_{M_{0}+1}}$ and conclude that $E_{M_{0}+1}^{*}=E_{0}$. 

By induction, $E_{M}^{*}=E_{0}$ for all $M\geq M_{0}+1$. This proves the claim, that is, either $E_{M}^{*}=E_{0}$ for all $M\in2\N_{0}+3$ or $E_{M}^{*}>E_{0}$ for all $M\in2\N_{0}+3$. To finish the proof, we assume $E_{M}^{*}>E_{0}$ for all $M\in2\N_{0}+3$ and show that $E_{M}^{*}\geq E_{0}+\de$ for all $M\in2\N_{0}+3$ for some $\de>0$. Let
\begin{equation*}
\de=\min\{\inf\si(H_{b,\Om_{3}}),\inf\si(H_{b,\Om_{5}})\}-E_{0}>0.
\end{equation*}
Since for $M\in2\N_{0}+3$ any $\Om_{M}$ is the disjoint union of subdomains and each of these subdomains is either the union of 2 adjacent cubes or the union of 3 adjacent cubes, we conclude from the unitary equivalence and Lemma \ref{app-lem-bracketing} that
\begin{equation*}
E_{M}^{*}\geq\min\{\inf\si(H_{b,\Om_{3}}),\inf\si(H_{b,\Om_{5}})\}=E_{0}+\de
\end{equation*}
for all $M\in2\N_{0}+3$. 

The ``moreover'' part follows from \eqref{construction-gs}. This completes the proof.
\end{proof}

If $E_{\vp_{a}}(a)=E_{\vp_{a}}(b)$ and $E_{\vp_{b}}(a)=E_{\vp_{b}}(b)$, we can obtain more information about $\nu$ from the condition $E_{M}^{*}=E_{0}$ for some, hence for all, $M\in2\N_{0}+3$ .

\begin{lem}\label{lem-key-2}
Suppose $E_{M}^{*}=E_{0}$ for all $M\in2\N_{0}+3$. If $E_{\vp_{a}}(a)=E_{\vp_{a}}(b)$ and $E_{\vp_{b}}(a)=E_{\vp_{b}}(b)$ is satisfied, then $\nu=1$.
\end{lem}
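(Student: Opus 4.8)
The plan is to glue the relevant ground states into a strictly positive solution on an infinite cylinder and then annihilate it by a Liouville-type argument based on the ground state transform.

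First I would extract two facts from the hypotheses. Since $E_{\vp_{b}}(a)=E_{\vp_{b}}(b)$, Lemma \ref{lem-ground-state}$\rm(ii)$ applied with $\vp=\vp_{b}$ gives $E_{0}\geq E_{\vp_{b}}(b)$; but $E_{\vp_{b}}(b)=\inf\si(H_{b})$ by Lemma \ref{lem-Mezin}, while $\inf\si(H_{b})\geq E_{0}$ by periodic approximation, so $\inf\si(H_{b})=E_{0}$. In particular, the continuously differentiable, $\Z^{d}$-periodic, strictly positive ground state $\vp_{b}$ of $H_{b}$ satisfies $\big(-\De+V_{0}+b\sum_{r\in\Z}u(\cdot-(0,r))-E_{0}\big)\vp_{b}=0$ on the cylinder $\Om_{\infty}:=(-\tfrac{1}{2},\tfrac{1}{2})^{d-1}\times\R$ and is bounded above and below by positive constants. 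On the other hand, the hypothesis $E_{M}^{*}=E_{0}$ for all $M$ is case $\rm(i)$ of Lemma \ref{lem-key}, which furnishes $\nu>0$ with $\vp_{M}^{*}|_{\CC_{(0,r)}}=\nu^{r}\vp_{M}^{*}|_{\CC_{(0,0)}}(\cdot-(0,r))$; write $\psi$ for the strictly positive ground state of $H_{b,\CC_{0}}$ (which, up to normalization, equals $\vp_{M}^{*}|_{\CC_{(0,0)}}$ and has eigenvalue $E_{\vp_{a}}(b)=E_{0}$).

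Next I would assemble the global solution. Define $\Psi$ on $\Om_{\infty}$ by $\Psi|_{\CC_{(0,r)}}:=\nu^{r}\psi(\cdot-(0,r))$, $r\in\Z$. The Mezincescu boundary condition via $\vp_{a}$ satisfied by $\psi$ on the faces $(-\tfrac{1}{2},\tfrac{1}{2})^{d-1}\times\{\pm\tfrac{1}{2}\}$, together with the $x'$-independence of the ratio $\psi(x',\tfrac{1}{2})/\psi(x',-\tfrac{1}{2})=\nu$ (the ``moreover'' part of Lemma \ref{lem-key}), makes the matching of $\Psi$ across every interior face $\{x_{d}=r+\tfrac{1}{2}\}$ of class $C^{1}$; hence $\Psi\in H^{2}_{\rm loc}(\Om_{\infty})$ is a strictly positive solution of $\big(-\De+V_{0}+b\sum_{r}u(\cdot-(0,r))-E_{0}\big)\Psi=0$ on $\Om_{\infty}$ which obeys the Mezincescu boundary condition via $\vp_{a}$ on the lateral boundary $\pa\big((-\tfrac{1}{2},\tfrac{1}{2})^{d-1}\big)\times\R$ and satisfies $\Psi(x',x_{d}+1)=\nu\,\Psi(x',x_{d})$. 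The goal is to force $\nu=1$. The core step is to apply the ground state transform with respect to $\vp_{b}$: set $w:=\Psi/\vp_{b}>0$ and $v:=\ln w$ on $\Om_{\infty}$. Since $\Psi$ and $\vp_{b}$ solve the same equation at energy $E_{0}$, one gets $\operatorname{div}(\vp_{b}^{2}\nabla w)=0$, hence $-\operatorname{div}(\vp_{b}^{2}\nabla v)=\vp_{b}^{2}|\nabla v|^{2}\geq0$ on $\Om_{\infty}$. Moreover $v(x',x_{d}+1)=v(x',x_{d})+\ln\nu$, so $\pa_{d}v$ is $1$-periodic in $x_{d}$, and on the lateral boundary, using the Mezincescu condition for $\Psi$, $\vp_{b}^{2}\pa_{\mathbf{n}}v=\vp_{b}^{2}\big(\tfrac{\pa_{\mathbf{n}}\vp_{a}}{\vp_{a}}-\tfrac{\pa_{\mathbf{n}}\vp_{b}}{\vp_{b}}\big)$ is a $\Z^{d}$-periodic function of $x$. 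Integrating over $\Om_{N}:=(-\tfrac{1}{2},\tfrac{1}{2})^{d-1}\times(-N,N)$, $N\in\N$, the divergence theorem gives
\begin{equation*}
\int_{\Om_{N}}\vp_{b}^{2}|\nabla v|^{2}=-\int_{\pa\Om_{N}}\vp_{b}^{2}\,\pa_{\mathbf{n}}v .
\end{equation*}
On the caps $x_{d}=\pm N$ the integrands equal $\pm\vp_{b}(x',0)^{2}\pa_{d}v(x',0)$ (by the two periodicities and the opposite outer normals), so they cancel; on each pair of opposite lateral faces $\{x_{j}=\pm\tfrac{1}{2}\}$ the integrands equal $\pm\big(\tfrac{\vp_{b}^{2}}{\vp_{a}}\pa_{j}\vp_{a}-\vp_{b}\pa_{j}\vp_{b}\big)$, a $\Z^{d}$-periodic function whose integrals over the two congruent faces agree, so they cancel too. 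Hence the right-hand side is $0$, which forces $\nabla v\equiv0$ on every $\Om_{N}$, hence on $\Om_{\infty}$; therefore $\ln\nu=v(x',x_{d}+1)-v(x',x_{d})=0$, i.e. $\nu=1$.

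The main obstacle is really the availability of the transforming solution: the argument needs a strictly positive $\Z^{d}$-periodic solution of the \emph{same} equation as $\Psi$, and the only candidate is $\vp_{b}$, which solves the right equation precisely because $\inf\si(H_{b})=E_{0}$ — and that is exactly where the second hypothesis $E_{\vp_{b}}(a)=E_{\vp_{b}}(b)$ enters essentially. With only $E_{\vp_{a}}(a)=E_{\vp_{a}}(b)$ available, the transform would leave an unremovable Robin term on the lateral boundary and the cancellation of boundary contributions would break down; everything else (the gluing of $\Psi$, and the elementary periodicity cancellations above) is routine once $\vp_{b}$ is in hand.
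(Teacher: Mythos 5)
Your proposal is correct, but the decisive step is carried out by a genuinely different mechanism than in the paper. Both arguments rest on the same two ingredients: from $E_{\vp_{b}}(a)=E_{\vp_{b}}(b)$ one gets $\inf\si(H_{b})=E_{0}$ (so the periodic ground state $\vp_{b}$ is a positive solution at energy $E_{0}$), and from Lemma \ref{lem-key} one gets the glued, $\nu$-quasi-periodic positive solution built from the cell ground state $\psi$. The paper then exhausts $\R^{d}$ by boxes $\La_{M}$: it shows $\inf\si(H_{b,\La_{M}})=E_{0}$ by bracketing, identifies the box ground states with the glued function slab by slab, passes to a positive generalized ground state of $H_{b}$ on all of $\R^{d}$, and invokes the $\Z^{d}$-periodicity (i.e.\ uniqueness up to scalars) of that ground state to force $\nu=1$. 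You instead stay on a single infinite cylinder and prove the required rigidity directly: the ground state transform $v=\ln(\Psi/\vp_{b})$ satisfies $-\operatorname{div}(\vp_{b}^{2}\nabla v)=\vp_{b}^{2}|\nabla v|^{2}$, and the $1$-periodicity of $\nabla v$ in $x_{d}$ together with the Mezincescu lateral condition makes all boundary contributions over your truncated cylinders cancel, so $\nabla v\equiv0$ and $\ln\nu=0$. This replaces the paper's implicit appeal to uniqueness of the positive periodic generalized eigenfunction by an explicit energy identity, which is more self-contained; the price is a handful of routine regularity checks (strict positivity of $\psi$ up to $\pa\CC_{0}$, the $C^{1}$ matching across interior faces --- which you verify correctly via the periodicity of $\vp_{a}$ --- and the divergence theorem for $H^{2}$ functions on a box), all of which hold under $\rm(H1)$--$\rm(H2)$ since $\psi,\vp_{a},\vp_{b}\in W^{2,p}_{\rm loc}\subset C^{1}$. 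Both proofs use the hypothesis $E_{\vp_{b}}(a)=E_{\vp_{b}}(b)$ at exactly the same point, namely to make $\vp_{b}$ available as a comparison solution at energy $E_{0}$; your closing remark that the argument would break without it matches the paper's reliance on Theorem \ref{thm-spectral-minimum}$\rm(ii)$ there.
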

\begin{proof}
Set $\SS_{0M}=(-\frac{1}{2},\frac{1}{2})^{d-1}\times(-\frac{M}{2},\frac{M}{2})$. As in \eqref{construction-gs}, we can easily construct a strict positive ground state, denoted by $\vp_{\SS_{0M}}^{*}$, of $H_{b,\SS_{0M}}$ with ground state energy $E_{0}$. 

For $q\in\Z^{d}\cap(-\frac{M}{2},\frac{M}{2})^{d-1}$, we set $\SS_{qM}=(q,0)+\SS_{0M}$. It follows from Lemma \ref{app-lem-bracketing} that 
\begin{equation*}
H_{b,\La_{M}}\geq\bigoplus_{q\in\Z^{d}\cap(-\frac{M}{2},\frac{M}{2})^{d-1}}H_{b,\SS_{qM}}.
\end{equation*}
By unitary equivalence, we find $\inf\si(H_{b,\La_{M}})\geq\inf\si(H_{b,\SS_{0M}})=E_{0}$.

Since $E_{\vp_{b}}(a)=E_{\vp_{b}}(b)$, Theorem \ref{thm-spectral-minimum}$\rm(ii)$ ensures that $E_{0}=E_{\vp_{b}}(b)$. Since $E_{\vp_{b}}(b)$ is inherited from $H_{b}$ by Lemma \ref{lem-Mezin}, we have $E_{0}=\inf\si(H_{b})$. Again, using Lemma \ref{app-lem-bracketing}, there holds $H_{b}\geq\bigoplus_{i\in M\Z^{d}}H_{b,i+\La_{M}}$, which leads to $E_{0}=\inf\si(H_{b})\geq\inf\si(H_{b,\La_{M}})$. Hence, 
\begin{equation}\label{spectrum-bottom}
\inf\si(H_{b,\La_{M}})=E_{0}.
\end{equation}

Using \eqref{spectrum-bottom} and the unitary equivalence of operators $H_{b,\SS_{qM}}$, $q\in\Z^{d}\cap(-\frac{M}{2},\frac{M}{2})^{d-1}$, a similar argument as in the proof of Lemma \ref{lem-key} yields that the ground state of $H_{b,\La_{M}}$ restricted to each $\SS_{qM}$ is the same as $\vp_{\SS_{0M}}^{*}$ up to translations and multiplication by positive scalars. 

Clearly, the above argument holds for any $M\in2\N_{0}+3$. Therefore, we actually obtain a ground state, denoted by $\vp_{b}^{*}$, of $H_{b}$.  The $\Z^{d}$-perodicity of $H_{b}$ then implies the $\Z^{d}$-periodicity of $\vp_{b}^{*}$, which leads to $\nu=1$.
\end{proof}

We now prove Theorem \ref{thm-lower-bound-estimate-1}.

\begin{proof}[Proof of Theorem \ref{thm-lower-bound-estimate-1}]
For all not-very-large $M$, the result follows from the arguments as in the proof of Theorem \ref{thm-lower-bound-estimate}. For all large $M$, using Lemma \ref{lem-key}, we only need to consider two cases. If Lemma \ref{lem-key}$\rm(ii)$ is true, we set $\de^{*}=\min\{\inf\si(P_{0})-E_{0},\de\}>0$ and conclude from Lemma \ref{app-lem-bracketing} that $\inf\si(P_{0M}^{*})\geq\min\{\inf\si(P_{0}),E_{M}^{*}\}\geq E_{0}+\de^{*}$, which leads to the result.

We now suppose that Lemma \ref{lem-key}$\rm(i)$ is satisfied. As \eqref{estimate-2}, we can use the ground state transform to find
\begin{equation*}
\bigg\|\nabla\bigg(\frac{\psi}{\vp_{M}^{*}}\bigg)\bigg\|^{2}\leq\frac{1}{(\inf\vp_{M}^{*})^{2}}\big[Q^{*}_{M}(\psi,\psi)-E_{0}\|\psi\|^{2}\big],\quad\psi\in H^{1}(\Om_{M}),
\end{equation*}
where $Q_{M}^{*}$ is the quadratic form of $H_{b,\Om_{M}}$. \eqref{estimate-1} and the above estimate ensures a similar estimate as in Lemma \ref{lem-estimate-1}, that is,
\begin{equation*}
\frac{4}{M}\|\Ga_{M}\psi\|^{2}+Q^{*}_{M}(\psi,\psi)-E_{0}\|\psi\|^{2}\geq\bigg(\frac{\inf\vp_{M}^{*}}{\sup\vp_{M}^{*}}\bigg)^{2}\frac{4}{M(M+1)}\|\psi\|^{2},\quad\psi\in H^{1}(\Om_{M}).
\end{equation*}
Note $\frac{\inf\vp_{M}^{*}}{\sup\vp_{M}^{*}}\in[c_{1},c_{2}]$ for all $M\in2\N_{0}+1$ by Lemma \ref{lem-key}, Lemma\ref{lem-key-2} and assumption $\rm(H6)$, which ensures the usefulness of the above estimate. The remaining proof follows in the same way as that of the proof of Theorem \ref{thm-lower-bound-estimate}.
\end{proof}

We end this section by making the following remark.

\begin{rem}\label{rem-lower-bound-gse}
\begin{itemize}
\item[\rm(i)] As mentioned at the beginning of this section, we will need a counterpart of Theorem \ref{thm-lower-bound-gse-main} .  Let
\begin{equation}\label{domain-2}
\begin{split}
\Om_{0}&=\bigg(-\frac{1}{2},\frac{1}{2}\bigg)^{d-1}\times\bigg(\frac{1}{2},\frac{m}{2}\bigg),\quad m\in2\N_{0}+3,\\
\Om_{M}&=\bigg(-\frac{1}{2},\frac{1}{2}\bigg)^{d-1}\times\bigg(-\frac{M}{2},\frac{1}{2}\bigg),\quad M\in2\N_{0}+1,\\
\Om_{0M}&=\text{\rm int}(\ol{\Om_{0}\cup\Om_{M}})
\end{split}
\end{equation}
and consider the operator
\begin{equation*}
-\De_{\Om_{0M}}+V_{0}1_{\Om_{0M}}+W_{\Om_{0M}}\quad\text{on}\quad\Om_{0M},
\end{equation*}
where the potential $W_{\Om_{0M}}$ is defined as follows: $W_{\Om_{0M}}1_{\Om_{0}}$ is of the form $\sum_{i\in\Z^{d}\cap\Om_{0}}\om_{i}u(\cdot-i)$ and $W_{\Om_{0M}}1_{\Om_{M}}=a\sum_{i\in\Z^{d}\cap\Om_{M}}u(\cdot-i)$ or $b\sum_{i\in\Z^{d}\cap\Om_{M}}u(\cdot-i)$.

\begin{thm}\label{thm-lower-bound-gse-main-1}
If $\inf\si(-\De_{\Om_{0}}+V_{0}1_{\Om_{0}}+W_{\Om_{0M}}1_{\Om_{0}})>E_{0}$, then there's some $M$-independent $C>0$ such that
\begin{equation*}
\inf\si(-\De_{\Om_{0M}}+V_{0}1_{\Om_{0M}}+W_{\Om_{0M}})\geq E_{0}+\frac{C}{M^{2}}
\end{equation*}
for all $M\in2\N_{0}+1$.
\end{thm}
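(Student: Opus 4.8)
The plan is to observe that Theorem~\ref{thm-lower-bound-gse-main-1} is literally Theorem~\ref{thm-lower-bound-gse-main} after the reflection $x_d \mapsto -x_d$ (together with the relabelling of the periodic ground states adapted to the new orientation), so the proof is a verbatim copy of the arguments in Subsections~\ref{subsec-inherited-gse} and \ref{subsec-non-inherited-gse} with the roles of ``below'' and ``above'' in the $d$-th coordinate interchanged. Concretely, the new $\Om_0 = (-\frac12,\frac12)^{d-1}\times(\frac12,\frac m2)$ sits above the strip $\Om_M = (-\frac12,\frac12)^{d-1}\times(-\frac M2,\frac12)$, and the shared face is now $S = (-\frac12,\frac12)^{d-1}\times\{\frac12\}$ rather than $\{-\frac12\}$; the outer normal relations $\mathbf{n}_{\Om_M} = -\mathbf{n}_{\Om_0}$ on $S$ and $\mathbf{n}_{\Om_M}\cdot\nabla = -\partial_d$ on $S$ are replaced by their mirror images, so all the Green's-formula bookkeeping goes through with the obvious sign changes that cancel as before.

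First I would record the analogue of the Poincar\'e-type inequality \eqref{estimate-1}: for $\psi\in H^1(\Om_M)$ with $\Om_M = (-\frac12,\frac12)^{d-1}\times(-\frac M2,\frac12)$ and $\Ga_M\psi$ the trace on $S=(-\frac12,\frac12)^{d-1}\times\{\frac12\}$, one has $\frac4M\|\Ga_M\psi\|^2 + \|\nabla\psi\|^2 \geq \frac{4}{M(M+1)}\|\psi\|^2$; this is exactly \cite[Lemma 2.2]{KN10} applied to the reflected strip. Then the ground-state-transform estimate analogous to \eqref{estimate-2}, Lemma~\ref{lem-estimate-1}, and the boundary operator $T_\la$ together with Lemma~\ref{lem-estimate} all carry over: $T_\la$ is still symmetric by Green's formula, and the strict positivity $Q_{\Om_0} \geq \al > \la_0$ (which uses only the hypothesis $\inf\si(P_0) > E_0$, unchanged here) still yields $\lan g, T_\la g\ran + \int_S \chi_{\Om_0}|g|^2 \geq \ep\|g\|^2$.

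Next I would run the two-case split exactly as in the original proof. For $W_{\Om_{0M}}1_{\Om_M} = a\sum u(\cdot - i)$ the strip operator $H_{a,\Om_M}$ (with Mezincescu boundary condition via $\vp_a$) inherits $\inf\si(H_{a,\Om_M}) = E_0$ and ground state $\vp_a|_{\Om_M}$, so Lemma~\ref{lem-estimate-1} applies directly and the Green's-formula computation gluing $\Om_0$ and $\Om_M$ across $S$ gives $\la_{0M}\|\psi_{0M}^{\Om_M}\|^2 \geq Q_M(\psi_{0M}^{\Om_M},\psi_{0M}^{\Om_M}) + \ep\|\Ga_M\psi_{0M}^{\Om_M}\|^2 \geq E_0\|\psi_{0M}^{\Om_M}\|^2 + \frac{C}{M^2}\|\psi_{0M}^{\Om_M}\|^2$, plus the separate elementary bracketing argument handling the not-very-large $M$. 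For $W_{\Om_{0M}}1_{\Om_M} = b\sum u(\cdot - i)$ I would invoke the reflected versions of Lemma~\ref{lem-key} and Lemma~\ref{lem-key-2} (the dichotomy $E_M^* = E_0$ for all $M$ versus $E_M^* \geq E_0 + \de$, and $\nu = 1$ in the critical subcase under $\rm(H5)$, $\rm(H6)$), which hold because those lemmas are themselves stated in terms of strips and reflection-invariant in the $d$-th coordinate; then the argument of Theorem~\ref{thm-lower-bound-estimate-1} applies unchanged, using $\frac{\inf\vp_M^*}{\sup\vp_M^*}\in[c_1,c_2]$.

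I do not expect a genuine obstacle here: the only thing to be careful about is that the reflection sends the Mezincescu boundary data $\chi$ defined via $\vp_a$ (or $\vp_b$) to the Mezincescu data defined via the reflected ground state, but since we are reflecting the whole configuration (both the periodic background after the coordinate change and the impurity arrangement), all periodicity and positivity properties used in Lemma~\ref{lem-Mezin} and Lemma~\ref{app-lem-bracketing} are preserved, and the sign of every boundary term flips in a matched pair so that the cancellations ($\chi_{\Om_M} = -\chi_{\Om_0}$ on $S$, etc.) are identical. Hence the statement follows, and I would simply note that ``the proof is verbatim that of Theorem~\ref{thm-lower-bound-gse-main} with the $d$-th coordinate reflected,'' spelling out only the sign changes in the Green's-formula step if desired.
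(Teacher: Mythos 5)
Your proposal is correct and matches the paper exactly: the paper itself gives no separate proof of Theorem \ref{thm-lower-bound-gse-main-1}, stating only that ``its proof is the same as that of Theorem \ref{thm-lower-bound-gse-main},'' which is precisely your reflection argument. You in fact supply more detail than the paper does, correctly verifying that the Poincar\'e-type inequality, the ground-state transform, the boundary operator $T_{\la}$, the dichotomy of Lemma \ref{lem-key}, and the Green's-formula sign cancellations all survive the reflection $x_{d}\mapsto -x_{d}$.
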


\item[\rm(ii)] Theorem \ref{thm-lower-bound-gse-main} and Theorem \ref{thm-lower-bound-gse-main-1} will be used in the next section only through
\begin{itemize}
\item[$\bullet$] $m=3$ and $W_{\Om_{0M}}1_{\Om_{0}}=tu(\cdot-i)$ with $\Z^{d}\cap\Om_{0}=\{i\}$ for some suitable $t\in(a,b)$;
\item[$\bullet$] $m=5$ and $W_{\Om_{0M}}1_{\Om_{0}}=t_{1}u(\cdot-i_{1})+t_{2}u(\cdot-i_{2})$ with $\Z^{d}\cap\Om_{0}=\{i_{1},i_{2}\}$, $t_{1},t_{2}\in\{a,b\}$ and $t_{1}\neq t_{2}$.
\end{itemize}
Under certain assumptions, the condition $\inf\si(-\De_{\Om_{0}}+V_{0}1_{\Om_{0}}+W_{\Om_{0M}}1_{\Om_{0}})>E_{0}$ in Theorem \ref{thm-lower-bound-gse-main} and Theorem \ref{thm-lower-bound-gse-main-1} is satisfied in both cases.
\end{itemize}
\end{rem}

\section{Lifshitz Tails: Non-Optimal Upper Bound}\label{sec-LT-non-optimal-upper-bound}

We prove Theorem \ref{thm-Lifshitz-tail-non-critical} and Theorem \ref{thm-Lifshitz-tail-non} in this section. To fix the ideal, we focus on the case $E_{\vp_{a}}(a)=E_{\vp_{a}}(b)$ and $E_{\vp_{b}}(a)\leq E_{\vp_{b}}(b)$. Therefore, all the Mezincescu boundary conditions in this section are defined using $\vp_{a}$. Also, to simplify statements, we always assume
\begin{itemize}
\item[$\bullet$] $\rm (H1)$, $\rm(H2)$, $\rm(H3)$, $\rm(H4)$, $\rm(H5)$, $\rm(H6)$, $E_{\vp_{a}}(a)=E_{\vp_{a}}(b)$ and $E_{\vp_{b}}(a)\leq E_{\vp_{b}}(b)$.
\end{itemize}
Due to technical reasons, we treat Bernoulli models and non-Bernoulli models separately. Theorem \ref{thm-Lifshitz-tail-non} is restated in Theorem \ref{thm-Lifshitz-tail-non-optimal} for non-Bernoulli models and in Theorem \ref{thm-Lifshitz-tail-non-optimal-B} for Bernoulli models.

\subsection{Non-Bernoulli Models}\label{subsec-non-B-model}

We treat non-Bernoulli models, that is, the i.i.d random variables $\{\om_{i}\}_{i\in\Z^{d}}$ are not Bernoulli distributed, so we can find some $\ep>0$ such that
\begin{equation}\label{the-prob}
\mu=\P\big\{\om\in\Om\big|\om_{*}\in[a,a+\ep)\cup(b-\ep,b]\big\}\in(0,1),
\end{equation}
where $\om_{*}$ the universal representation of $\{\om_{i}\}_{i\in\Z^{d}}$. We fix such an $\ep$. 

Theorem \ref{thm-Lifshitz-tail-non} in this case is restated as

\begin{thm}\label{thm-Lifshitz-tail-non-optimal}
If the i.i.d random variables $\{\om_{i}\}_{i\in\Z^{d}}$ are not Bernoulli distributed, then
\begin{equation*}
\limsup_{E\da E_{0}}\frac{\ln|\ln N(E)|}{\ln(E-E_{0})}\leq-\frac{1}{2}.
\end{equation*}
\end{thm}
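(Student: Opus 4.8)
\textbf{Proof plan for Theorem \ref{thm-Lifshitz-tail-non-optimal}.}
The plan is to establish the upper bound by a Neumann/Mezincescu bracketing combined with a probabilistic counting of ``bad'' boxes, exactly in the spirit of the classical Lifshitz-tail argument but using the quasi-one-dimensional lower bounds of Section \ref{sec-lower-bound-gse} to quantify the energy cost of a configuration that is not entirely favorable. First I would fix a large scale $\ell=\ell(E)\in2\N_0+1$ (to be chosen of order $(E-E_0)^{-1/2}$, since the claimed exponent is $-\tfrac12$) and use \eqref{IDS-expectation} to write $N(E)\le \ell^{-d}\,\E\{N(H_{\cdot,\La_\ell},E)\}$. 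Since $H_{\om,\La_\ell}\ge H_{a,\La_\ell}$ (because $V_\om\ge V_a$ pointwise is false in general — here instead I bound $H_{\om,\La_\ell}$ below by the periodic operator $H_a$ or $H_b$ on $\La_\ell$ after discarding the favorable/unfavorable distinction) one gets $N(H_{\om,\La_\ell},E)\le N(H_{\mathrm{per},\La_\ell},E)\,\mathbf 1_{\{E_0(H_{\om,\La_\ell})\le E\}}$, and by the van Hove behavior of the periodic IDS the prefactor $N(H_{\mathrm{per},\La_\ell},E)$ is polynomially bounded in $\ell$. Thus everything reduces to showing that $\P\{E_0(H_{\om,\La_\ell})\le E_0+\kappa\ell^{-2}\}$ decays like $e^{-c\ell^d}$ for a suitable small $\kappa>0$.

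The heart of the argument is the deterministic lower bound: I claim there is $\kappa>0$ such that if $\om$ restricted to $\La_\ell$ contains, along \emph{every} coordinate-$d$ line through $\La_\ell$, at least one pair of adjacent sites $(i,i+e_d)$ with $\om_i\in[a,a+\ep)$ and $\om_{i+e_d}\in(b-\ep,b]$ or vice versa — i.e. a ``defect'' of the type governed by $\rm(H5)$ — then $E_0(H_{\om,\La_\ell})\ge E_0+\kappa\ell^{-2}$. To see this I would slice $\La_\ell$ into the strips $\Om_{0M}$ of \eqref{aux-domain} (with the defect pair playing the role of $\Om_0$, $m=5$, and the two half-columns above and below playing the role of two copies of $\Om_M$), apply Mezincescu bracketing (Lemma \ref{app-lem-bracketing}) to bound $H_{\om,\La_\ell}$ below by the direct sum of these strip operators, and invoke Theorem \ref{thm-lower-bound-gse-main} together with Theorem \ref{thm-lower-bound-gse-main-1}; the hypothesis $\inf\si$ of the $\Om_0$-piece is $>E_0$ is precisely what $\rm(H5)$ (and Remark \ref{rem-lower-bound-gse}\rm(ii) with $m=5$) guarantees, and the resulting gain $C/M^2$ with $M\sim\ell$ yields the $\ell^{-2}$ lower bound. (In the cases where the $\Om_0$-piece sits exactly at $E_0$, $\rm(H6)$ and Lemma \ref{lem-key-2} let the quasi-one-dimensional estimate still go through with a uniform $\tfrac{\inf\vp}{\sup\vp}$ ratio.)

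Given the deterministic bound, the probabilistic step is a routine large-deviation / independence estimate: the event $\{E_0(H_{\om,\La_\ell})\le E_0+\kappa\ell^{-2}\}$ is contained in the event that some coordinate-$d$ column of $\La_\ell$ contains \emph{no} admissible adjacent defect pair. There are $\ell^{d-1}$ disjoint columns (or rather $\sim\ell^{d-1}$ families of disjoint pairs), the per-pair probability of being a defect is $\ge \mu(1-\mu)$-type constant $p_0>0$ coming from \eqref{the-prob} and independence, and the columns involve disjoint sites so the column events are independent; hence the probability that a fixed column has no defect among its $\sim\ell/2$ disjoint adjacent pairs is $\le (1-p_0)^{c\ell}$, and the union over $\sim\ell^{d-1}$ columns still forces \emph{all} to be bad, giving $\le (1-p_0)^{c\ell^d}$ after accounting correctly for the product structure. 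Choosing $\ell=\lfloor (E-E_0)^{-1/2}\rfloor$ then gives $N(E)\le \mathrm{poly}(\ell)\,e^{-c'\,(E-E_0)^{-d/2}}$, which is far stronger than needed; taking $\ln|\ln\cdot|$ and dividing by $\ln(E-E_0)$ yields $\limsup\le-\tfrac12$.

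The main obstacle I anticipate is the deterministic step: making the slicing of $\La_\ell$ into $\Om_{0M}$-type strips fit together cleanly under Mezincescu bracketing (the boundary-condition compatibility of Lemma \ref{app-lem-bracketing}\rm(i)--\rm(ii) must be checked on the interfaces of the strips), and arranging that the hypothesis ``$\inf\si$ of the $\Om_0$-piece $>E_0$'' of Theorems \ref{thm-lower-bound-gse-main}--\ref{thm-lower-bound-gse-main-1} is verified for every defect pair uniformly — this is exactly where assumptions $\rm(H5)$ and $\rm(H6)$ are used, and where the Bernoulli versus non-Bernoulli distinction bites (here the genuine interval of mass near $a$ and $b$ in \eqref{the-prob} is what gives the positive probability $p_0$ without having to engineer exact values).
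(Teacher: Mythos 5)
Your overall architecture is the right one and matches the paper's: reduce $N(E)$ via \eqref{IDS-expectation} and the periodic eigenvalue counting function to the probability $\P\{E_{0}(H_{\om,\La_{L}})\leq E_{0}+\kappa L^{-2}\}$ with $L\sim(E-E_{0})^{-1/2}$, prove a deterministic bound $E_{0}(H_{\om,\La_{L}})\geq E_{0}+CL^{-2}$ for configurations containing a suitable defect in every $d$-th-coordinate column, and estimate the probability of the complementary event. There are, however, two genuine gaps.

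First, your deterministic step uses the wrong defect. You anchor each column at a single adjacent pair with $\om_{i}\in[a,a+\ep)$ and $\om_{i+e_{d}}\in(b-\ep,b]$. Splitting the column at that pair produces \emph{two} half-columns, and the configuration in which the lower half is uniformly (near) $a$ and the upper half uniformly (near) $b$ defeats the machinery of Section \ref{sec-lower-bound-gse}: Theorems \ref{thm-lower-bound-gse-main} and \ref{thm-lower-bound-gse-main-1} attach exactly \emph{one} long uniform piece $\Om_{M}$ to a short defect piece $\Om_{0}$ with $\inf\si(\Om_{0}\text{-piece})>E_{0}$, so the defect pair can rescue at most one of the two half-columns; the other (say the all-$b$ one, with Mezincescu boundary conditions defined via $\vp_{a}$) may have ground state energy exactly $E_{0}$ by Lemma \ref{lem-key}$\rm(i)$, and bracketing over the three pieces then yields only $\geq E_{0}$. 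The paper's defect in the non-Bernoulli case is an adjacent pair with \emph{both} values in the middle interval $[a+\ep,b-\ep]$: the energy cost of such a site comes from the strict concavity of $t\mapsto E_{\vp_{a}}(t)$ (Lemma \ref{lem-ground-state}$\rm(i)$, yielding \eqref{a-condition}), not from $\rm(H5)$. Each half-column is then anchored at its inner end by its own middle site and decomposed (Lemmas \ref{lem-estimate-key-1}--\ref{lem-estimate-key-2}) into subsegments headed either by one middle cube ($m=3$, strict concavity) or by an $a$--$b$ flip ($m=5$, $\rm(H5)$), to each of which Theorems \ref{thm-lower-bound-gse-main}--\ref{thm-lower-bound-gse-main-1} apply; $\rm(H5)$ enters only through these secondary flips. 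You also omit the concavity of $\om\mapsto E_{L}(\om)$, which is what reduces a general $\om\in[a,b]^{\Z^{d}\cap\La_{L}}$ to the finite alphabet $\{a,b,a+\ep,b-\ep\}$ (Lemma \ref{lem-estimate-key-4}).

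Second, your probabilistic bookkeeping is wrong, and the error changes the exponent. Since the deterministic lemma requires \emph{every} column to carry a defect, the bad event is that \emph{some} column has none; the union bound over the $L^{d-1}$ columns gives $L^{d-1}(1-p_{0})^{cL}$, i.e.\ decay $e^{-cL}$, not $e^{-cL^{d}}$ (the latter would be the probability that \emph{all} columns are defect-free, which is not the relevant event). With $L\sim(E-E_{0})^{-1/2}$ this yields $N(E)\lesssim e^{-c(E-E_{0})^{-1/2}}$ and hence exactly the exponent $-\tfrac{1}{2}$ claimed in the theorem; your asserted bound $e^{-c(E-E_{0})^{-d/2}}$ would prove the optimal exponent $-\tfrac{d}{2}$, which is precisely what this quasi-one-dimensional argument cannot deliver (hence the section title ``non-optimal upper bound'').
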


For $L\in2\N_{0}+1$, set $E_{L}(\om)=\inf\si(H_{\om,\La_{L}})$. Since $H_{\om,\La_{L}}$ depends only on $\{\om_{i}\}_{i\in\Z^{d}\cap\La_{L}}$, so does $E_{L}(\om)$. It's not hard to verify that the map $\om\mapsto E_{L}(\om):[a,b]^{\Z^{d}\cap\La_{L}}\ra\R$ is real analytic and concave. The following lemma is the key to the proof of the above theorem.

\begin{lem}\label{lem-estimate-key-4}
There exists some $C>0$ such that for all large $L\in2\N_{0}+1$ there holds
\begin{equation}\label{estimate-key-1}
E_{L}(\om)\geq E_{0}+\frac{C}{L^{2}}
\end{equation}
for all $\om\in[a,b]^{\Z^{d}\cap\La_{L}}$ satisfying the property: for any $q\in\Z^{d-1}\cap(-\frac{L}{2},\frac{L}{2})^{d-1}$ there exist $r_{1},r_{2}\in\Z\cap(-\frac{L}{2},\frac{L}{2})$ such that $|r_{1}-r_{2}|=1$ (that is, $r_{1}$ and $r_{2}$ are adjacent) and both $\om_{(q,r_{1})}$ and $\om_{(q,r_{2})}$ belong to $[a+\ep,b-\ep]$.
\end{lem}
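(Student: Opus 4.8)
The plan is to bracket $H_{\om,\La_{L}}$ into one-dimensional tubes along the $d$-th coordinate and, on each tube, to use the adjacent interior pair as a potential barrier which, via the quasi-one-dimensional estimates of Section \ref{sec-lower-bound-gse}, pushes the bottom of the spectrum up by an amount $\sim L^{-2}$. For $q\in\Z^{d-1}\cap(-\frac{L}{2},\frac{L}{2})^{d-1}$ put $\SS_{q}=\bigl(q+(-\frac12,\frac12)^{d-1}\bigr)\times(-\frac{L}{2},\frac{L}{2})$, the disjoint union of the $L$ unit cubes $\CC_{(q,r)}$ stacked in the $d$-th direction. Since all Mezincescu conditions here are taken with respect to the fixed $\Z^{d}$-periodic $\vp_{a}$, the cancellation of internal boundary terms as in Lemma \ref{app-lem-bracketing} (and Remark \ref{rem-bracketing}), after grouping cubes into tubes, gives $H_{\om,\La_{L}}\geq\bigoplus_{q}H_{\om,\SS_{q}}$, hence $E_{L}(\om)\geq\min_{q}\inf\si(H_{\om,\SS_{q}})$. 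It thus suffices to produce an $L$-, $q$- and $\om$-independent $C>0$ with $\inf\si(H_{\om,\SS_{q}})\geq E_{0}+C/L^{2}$ whenever $\SS_{q}$ contains adjacent cubes $\CC_{(q,r_{1})},\CC_{(q,r_{2})}$ with $\om_{(q,r_{1})},\om_{(q,r_{2})}\in[a+\ep,b-\ep]$.

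The barrier is quantitatively high. By Lemma \ref{lem-ground-state}$\rm(i)$, $E_{\vp_{a}}(\cdot)$ is strictly concave on $[a,b]$; since in the present case $E_{\vp_{a}}(a)=E_{\vp_{a}}(b)=E_{0}$, this gives $E_{\vp_{a}}(t)>E_{0}$ for all $t\in(a,b)$ and, by continuity, $\eta:=\min_{t\in[a+\ep,b-\ep]}\bigl(E_{\vp_{a}}(t)-E_{0}\bigr)>0$, with $\eta$ depending only on $\ep$ and the fixed periodic data. Hence each single-cube operator $H_{\om_{(q,r)},\CC_{0}}$ with $\om_{(q,r)}\in[a+\ep,b-\ep]$ has $\inf\si\geq E_{0}+\eta$, and bracketing the two-cube box $\Om_{0}:=\text{int}(\ol{\CC_{(q,r_{1})}\cup\CC_{(q,r_{2})}})$ into its two cubes gives $\inf\si(H_{\om,\Om_{0}})\geq E_{0}+\eta$. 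This strict, $L$-uniform positivity of the barrier is the single analytic input beyond the quasi-one-dimensional machinery.

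Next, regard $\SS_{q}$ as $\Om_{0}$ with a tube segment below it (cubes of smaller $d$-coordinate) and one above it; one of these two segments has at least $(L-2)/2$ cubes, the other possibly none. When such a segment $T$ carries constant coefficient $a$ or $b$, the ground state transform by $\vp_{a}|_{T}$ removes all potential terms on $T$, and the mechanism of Section \ref{sec-lower-bound-gse} — the Poincar\'{e}-type inequality \eqref{estimate-1}, the ground state transform of Lemma \ref{lem-estimate-1}, and the Dirichlet-to-Neumann positivity of Lemma \ref{lem-estimate}, exactly as in the proofs of Theorems \ref{thm-lower-bound-gse-main} and \ref{thm-lower-bound-gse-main-1} (here with the boundary value problem on $\Om_{0}$ carrying Dirichlet data on the relevant face(s)) — turns $\inf\si(H_{\om,\Om_{0}})\geq E_{0}+\eta$ into $\inf\si\geq E_{0}+c(\eta)/\ell^{2}$ over a segment of length $\ell\leq L$, with $c(\eta)$ depending only on $\eta$ and the fixed data. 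To reduce to constant flanking segments one uses the dichotomy of Lemma \ref{lem-key}: bracketing into cubes gives $\inf\si(H_{\om,T})\geq E_{0}$, and if equality holds then the ground state of $H_{\om,T}$ restricts to a single-cube ground state on each cube, which by strict concavity forces every coefficient into $\{a,b\}$, while $\rm(H5)$ forbids any $a$--$b$ transition; thus $T$ is either constant ($a$ or $b$), or satisfies $\inf\si(H_{\om,T})>E_{0}$ and contains a further defect of strength $\geq\eta$ (an interior cube) or $\geq\de_{0}$ (an $a$--$b$ transition, $\de_{0}>0$ being the minimum over the finitely many such two-cube configurations, positive by $\rm(H5)$).

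The main obstacle is precisely this reduction, together with the bookkeeping required to keep the resulting constant $C$ independent of $L$, $q$ and $\om$: one must locate, inside each tube, a defect whose two flanking runs are both constant, and then apply the quasi-one-dimensional estimate in its two-sided form (barrier in the interior of the tube, whose proof is identical to that of Theorem \ref{thm-lower-bound-gse-main}) \emph{without} discarding the remainder of the tube — discarding any sub-tube would be fatal, since an all-$a$ or all-$b$ sub-tube contributes only $E_{0}$ to the minimum. Once this is arranged one obtains $\inf\si(H_{\om,\SS_{q}})\geq E_{0}+C/L^{2}$ with $C=C(\ep)$, which proves the lemma. (This step is what produces the exponent $\frac{1}{2}$ later: by \eqref{the-prob} and independence, the probability that some tube of $\La_{L}$ contains no adjacent interior pair is at most of order $L^{d-1}(1-\mu)^{L}$, so configurations excluded by the hypothesis are exponentially rare — but that estimate belongs to the step after this lemma.)
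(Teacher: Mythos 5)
Your tube decomposition and the treatment of the restricted configurations (defect cube plus constant flanking runs) essentially reproduce the paper's Lemmas \ref{lem-estimate-key-1}--\ref{lem-estimate-key-3}, but there is a genuine gap in how you pass from those special configurations to a general $\om\in[a,b]^{\Z^{d}\cap\La_{L}}$. Your ``dichotomy'' for a flanking segment $T$ --- either $T$ is constant equal to $a$ or $b$, or it contains a further defect of uniformly positive strength $\eta$ or $\de_{0}$ --- does not cover coefficients lying in $(a,a+\ep)\cup(b-\ep,b)$ but different from $a$ and $b$. A cube with coefficient $a+\de'$ for tiny $\de'>0$ is not constant-$a$ (so the ground state transform by $\vp_{a}$ does not remove its potential and the quasi-one-dimensional machinery of Section \ref{sec-lower-bound-gse} does not apply to the run containing it), yet its single-cube gap $E_{\vp_{a}}(a+\de')-E_{0}$ tends to $0$ as $\de'\da0$, so it is not a defect of uniformly quantified strength either. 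Since the hypothesis of the lemma constrains only the existence of one adjacent pair in $[a+\ep,b-\ep]$ per tube and says nothing about the remaining coefficients, your argument breaks down on such configurations; no amount of bookkeeping inside the tube fixes this, because the flanking runs simply need not be of the form your trichotomy assumes. (A secondary point: Lemma \ref{lem-key} concerns the constant-coefficient operator $H_{b,\Om_{M}}$ and cannot be invoked for a general $H_{\om,T}$.)

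The paper's missing ingredient is the \emph{concavity} of $\om\mapsto E_{L}(\om)$ on $[a,b]^{\Z^{d}\cap\La_{L}}$ (it is the infimum of affine functions of $\om$). One first proves \eqref{estimate-key-1} only for $\om\in\{a,b,a+\ep,b-\ep\}^{\Z^{d}\cap\La_{L}}$ with adjacent pairs in $\{a+\ep,b-\ep\}$ (Lemma \ref{lem-estimate-key-3}), where every flanking run genuinely decomposes into blocks of the form ``defect plus constant run of $a$'s or $b$'s'' and Theorem \ref{thm-lower-bound-gse-main} applies; then a general admissible $\om$ is written as a convex combination $\om=\sum_{\eta}\mu_{\eta}\tilde{\eta}$ of such extreme configurations (replacing each coordinate in $[a+\ep,b-\ep]$ by one of $a+\ep,b-\ep$ and each remaining coordinate by one of $a,b$), and concavity gives $E_{L}(\om)\geq\sum_{\eta}\mu_{\eta}E_{L}(\tilde{\eta})\geq\min_{\eta}E_{L}(\tilde{\eta})\geq E_{0}+C/L^{2}$. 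You need this reduction (or an equivalent substitute) to make your argument complete.
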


The proof of Lemma \ref{lem-estimate-key-4} is technical. Let's postpone it to the proof of Theorem \ref{thm-Lifshitz-tail-non-optimal}.

\begin{proof}[Proof of Theorem \ref{thm-Lifshitz-tail-non-optimal}]
It suffices to give a proper estimate for $\P\{\om\in\Om|E_{L}(\om)\leq E\}$ for large $L\in2\N_{0}+1$. Let $E>E_{0}$ and set $L=c(E-E_{0})^{-1/2}$ for some $c>0$ with $c^{2}<C$, where $C>0$ is the same as in \eqref{estimate-key-1}. Assume that $E$ is close to $E_{0}$ so that $L$ is large. If $\om\in[a,b]^{\Z^{d}\cap\La_{L}}$ is as in Lemma \ref{lem-estimate-key-4}, we deduce from \eqref{estimate-key-1} that $E_{L}(\om)>E$. Therefore,
\begin{equation*}
\begin{split}
&\P\big\{\om\in\Om\big|E_{L}(\om)\leq E\big\}\\
&\quad\quad\leq\P\bigg\{\om\in[a,b]^{\Z^{d}\cap\La_{L}}\bigg|
\begin{aligned}
&\exists q\in\Z^{d-1}\cap(-L/2,L/2)^{d-1}\,\,\text{s.t.}\,\,\text{no adjacent}\\ &r_{1},r_{2}\in\Z\cap(-L/2,L/2)\,\,\text{satisfies}\,\,\om_{(q,r_{1})},\om_{(q,r_{2})}\in[a+\ep,b-\ep]
\end{aligned}\bigg\}\\
&\quad\quad\leq\sum_{q\in\Z^{d-1}\cap(-\frac{L}{2},\frac{L}{2})^{d-1}}\P\bigg\{\om\in[a,b]^{\Z^{d}\cap\SS_{qL}}\bigg|
\begin{aligned}
&\text{no adjacent}\,\,r_{1},r_{2}\in\Z\cap(-L/2,L/2)\\
&\text{satisfies}\,\,\om_{(q,r_{1})},\om_{(q,r_{2})}\in[a+\ep,b-\ep]
\end{aligned}\bigg\}\\
&\quad\quad\triangleq\sum_{q\in\Z^{d-1}\cap(-\frac{L}{2},\frac{L}{2})^{d-1}}\P\big\{\Om_{q}\big\}\\
&\quad\quad=L^{d-1}\P\big\{\Om_{q_{0}}\big\}
\end{split}
\end{equation*}
for any $q_{0}\in\Z^{d-1}\cap(-\frac{L}{2},\frac{L}{2})^{d-1}$.

To estimate the probability $\P\{\Om_{q_{0}}\}$, we note that the event $\Om_{q_{0}}$ can be written as
\begin{equation*}
\begin{split}
\Om_{q_{0}}=\Bigg\{\om\in[a,b]^{\Z^{d}\cap\SS_{q_{0}L}}\Bigg|
\begin{aligned}
&\text{any adjacent}\,\,r_{1},r_{2}\in\Z\cap(-L/2,L/2)\,\,\text{satisfies either}\\
&\om_{(q_{0},r_{1})}\in[a+\ep,b-\ep]\,\,\text{and}\,\,\om_{(q_{0},r_{2})}\in[a,a+\ep)\cup(b-\ep,b]\\
&\text{or}\,\,\om_{(q_{0},r_{1})}\in[a,a+\ep)\cup(b-\ep,b]\,\,\text{and}\,\,\om_{(q_{0},r_{2})}\in[a+\ep,b-\ep]\\
\end{aligned}\Bigg\}.
\end{split}
\end{equation*}
Let $N=\max\{n\in\Z|2n\leq\frac{L}{2}\}$, the largest integer satisfying $2N\leq\frac{L}{2}$. For $n=1,2,\dots,N$, set $I_{n}=\big\{2n-1,2n\big\}$ and for $n=-1,-2,\dots,-N$, set $I_{n}=\big\{2n,2n+1\big\}$.
That is, we decompose the sets $\{1,\dots,2N\}$ and $\{-2N,\dots,-1\}$ into disjoint sets such that each such set consists of two adjacent integers. Then, for any $n\in\{-N,\dots,N\}\bs\{0\}$, we can simply write
\begin{equation*}
I_{n}=\big\{r_{n1},r_{n2}\big\}
\end{equation*}
with $r_{n2}-r_{n1}=1$. Moreover, for any $m,n\in\{-N,\dots,N\}\bs\{0\}$ with $m\neq n$,
\begin{equation*}
\om_{(q_{0},I_{m})}=\{\om_{(q_{0},r_{m1})},\om_{(q_{0},r_{m2})}\}\quad\text{and}\quad\om_{(q_{0},I_{n})}=\{\om_{(q_{0},r_{n1})},\om_{(q_{0},r_{n2})}\}
\end{equation*}
are independent. It follows that
\begin{equation*}
\begin{split}
\P\big\{\Om_{q_{0}}\big\}&\leq\P\Bigg\{\om\in[a,b]^{\Z^{d}\cap\SS_{q_{0}L}}\Bigg|
\begin{aligned}
&\forall n\in\{-N,\dots,N\}\bs\{0\}\,\,\text{there holds either}\\
&\om_{(q_{0},r_{n1})}\in[a+\ep,b-\ep]\,\,\text{and}\,\,\om_{(q_{0},r_{n2})}\in[a,a+\ep)\cup(b-\ep,b]\\
&\text{or}\,\,\om_{(q_{0},r_{n1})}\in[a,a+\ep)\cup(b-\ep,b]\,\,\text{and}\,\,\om_{(q_{0},r_{n2})}\in[a+\ep,b-\ep]\\
\end{aligned}\Bigg\}\\
&=\prod_{n\in\{-N,\dots,N\}\bs\{0\}}\P\big\{\Om_{q_{0}}(n)\big\}\\
&=\Big(\P\big\{\Om_{q_{0}}(n_{0})\big\}\Big)^{2N}
\end{split}
\end{equation*}
for any $n_{0}\in\{-N,\dots,N\}\bs\{0\}$, where
\begin{equation*}
\Om_{q_{0}}(n)=\bigg\{\om\in[a,b]^{(q_{0},I_{n})}\bigg|
\begin{aligned}
&\text{either}\,\,\om_{(q_{0},r_{n1})}\in[a+\ep,b-\ep]\,\,\text{and}\,\,\om_{(q_{0},r_{n2})}\in[a,a+\ep)\cup(b-\ep,b]\\
&\text{or}\,\,\om_{(q_{0},r_{n1})}\in[a,a+\ep)\cup(b-\ep,b]\,\,\text{and}\,\,\om_{(q_{0},r_{n2})}\in[a+\ep,b-\ep]\\
\end{aligned}\bigg\}
\end{equation*}
for $n\in\{-N,\dots,N\}\bs\{0\}$. It's easy to see $\P\big\{\Om_{q_{0}}(n_{0})\big\}=2\mu(1-\mu)\leq\frac{1}{2}$,
which leads to
\begin{equation*}
\P\big\{\Om_{q_{0}}\big\}\leq\bigg(\frac{1}{2}\bigg)^{2N}\leq2^{\frac{3}{2}}\bigg(\frac{1}{2}\bigg)^{\frac{L}{2}},
\end{equation*}
where we used $4N\geq L-3$. Consequently, recalling $L=c(E-E_{0})^{-1/2}$, we find
\begin{equation}\label{main-estimate}
\P\big\{\om\in\Om\big|E_{L}(\om)\leq E\big\}\leq L^{d-1}2^{\frac{3}{2}}\bigg(\frac{1}{2}\bigg)^{\frac{L}{2}}=c^{d-1}(E-E_{0})^{-(d-1)/2}2^{\frac{3}{2}}\bigg(\frac{1}{2}\bigg)^{\frac{c}{2}(E-E_{0})^{-1/2}},
\end{equation}
for all $E>E_{0}$ with $E-E_{0}$ small, which leads to the result.
\end{proof}

We point out that a more direct approach to an estimate similar to \eqref{main-estimate} is given in Remark \ref{rem-a-prob-estimate} below when $\mu$ is in a neighborhood of $\frac{1}{2}$.

We now proceed to prove Lemma \ref{lem-estimate-key-4}. By Lemma \ref{lem-ground-state}$\rm(i)$, we have $E_{\vp_{a}}(a)<E_{\vp_{a}}(a+\ep)$ and $E_{\vp_{a}}(b-\ep)>E_{\vp_{a}}(b)$. In particular,
\begin{equation}\label{a-condition}
\min\big\{E_{\vp_{a}}(a+\ep),E_{\vp_{a}}(b-\ep)\big\}>E_{0}.
\end{equation}

\begin{lem}\label{lem-estimate-key-1}
Let $r\in\Z\cap(-\frac{L}{2},\frac{L}{2})$ and set
$\SS=(-\frac{1}{2},\frac{1}{2})^{d-1}\times(r-\frac{1}{2},\frac{L}{2})$.
Then there exists some $C>0$ such that for all large $L\in2\N_{0}+1$ there holds
\begin{equation*}
\inf\si(H_{\om,\SS})\geq E_{0}+\frac{C}{\big(\frac{L+1}{2}-r\big)^{2}}
\end{equation*}
for all $\om\in\{a,b,a+\ep,b-\ep\}^{\Z^{d}\cap\SS}$ satisfying $\om_{(0,r)}\in\{a+\ep,b-\ep\}$.
\end{lem}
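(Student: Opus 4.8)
The plan is to argue directly with the quasi one-dimensional geometry of $\SS$, extending the Klopp--Nakamura quasi one-dimensional technique and the method of Section~\ref{sec-lower-bound-gse}, but \emph{cube by cube} rather than through the two-piece ($\Om_{0}$ plus $\Om_{M}$) reduction: the column above $\CC_{(0,r)}$ carries an arbitrary mixture of the values $a,b,a+\ep,b-\ep$, so Theorem~\ref{thm-lower-bound-gse-main-1} is not directly applicable. Enumerate the unit cubes of $\SS$ as $\CC_{(0,r)},\dots,\CC_{(0,R)}$ with $R=\frac{L-1}{2}$, so $\ell:=R-r+1=\frac{L+1}{2}-r$ is their number and $\CC_{(0,r)}$ the distinguished cube; put $2\de_{0}=\min\{E_{\vp_{a}}(a+\ep),E_{\vp_{a}}(b-\ep)\}-E_{0}$, which is positive by \eqref{a-condition}. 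Let $\psi$ be an $L^{2}$-normalised ground state of $H_{\om,\SS}$ and $E=\inf\si(H_{\om,\SS})$. The box analogue of Lemma~\ref{app-lem-bracketing}(iii) (same proof: adjacent unit cubes carry opposite Mezincescu functions on their common face, and $\partial\SS$ carries the function defined via $\vp_{a}$) gives
\begin{equation*}
E-E_{0}=\sum_{j=r}^{R}Q_{j}\big(\psi|_{\CC_{(0,j)}}\big),\qquad Q_{j}(\phi):=\big\langle\phi,(H_{\om_{(0,j)},\CC_{(0,j)}}-E_{0})\phi\big\rangle,
\end{equation*}
where each $Q_{j}\geq0$ since $\inf\si(H_{\om_{(0,j)},\CC_{(0,j)}})=E_{\vp_{a}}(\om_{(0,j)})\geq E_{0}$ (recall $E_{\vp_{a}}(a)=E_{\vp_{a}}(b)=E_{0}$), while $Q_{r}\geq2\de_{0}\|\cdot\|^{2}$ because $\om_{(0,r)}\in\{a+\ep,b-\ep\}$. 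It therefore suffices to prove $\sum_{j}\|\psi|_{\CC_{(0,j)}}\|^{2}\leq C\ell^{2}\sum_{j}Q_{j}(\psi|_{\CC_{(0,j)}})$ with $C$ independent of $L$, $r$ and $\om$.

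Decompose $\psi|_{\CC_{(0,j)}}=c_{j}g_{j}+h_{j}$, where $g_{j}$ is the normalised positive ground state of $H_{\om_{(0,j)},\CC_{(0,j)}}$ and $h_{j}\perp g_{j}$. Since $g_{j}$ is an eigenvector, $Q_{j}(\psi|_{\CC_{(0,j)}})=(E_{\vp_{a}}(\om_{(0,j)})-E_{0})c_{j}^{2}+Q_{j}(h_{j})\geq\ga\|h_{j}\|^{2}$, where $\ga>0$ is a uniform lower bound, over the four single-cube operators, for the spectral gap above $E_{0}$; moreover $Q_{r}(\psi|_{\CC_{(0,r)}})\geq2\de_{0}(c_{r}^{2}+\|h_{r}\|^{2})$, and more generally for any defect cube the full $L^{2}$-norm is controlled by $Q$. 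A trace estimate and the (uniform, since $V_{0},u\in L^{p}$ with $p>d$) form-boundedness of $V_{0}+\om_{(0,j)}u-E_{0}$ give $\|h_{j}\|_{H^{1}(\CC_{(0,j)})}^{2}\leq C\,Q_{j}(\psi|_{\CC_{(0,j)}})$ and $\|\phi|_{F}\|_{L^{2}(F)}^{2}\leq C\|\phi\|_{H^{1}}^{2}$ for any face $F$. Cubes are coupled through the matching of the traces of the single function $\psi$ across the interface $F_{j}$ of $\CC_{(0,j)}$ and $\CC_{(0,j+1)}$:
\begin{equation*}
c_{j}\,g_{j}|_{F_{j}}-c_{j+1}\,g_{j+1}|_{F_{j}}=h_{j+1}|_{F_{j}}-h_{j}|_{F_{j}}\quad\text{in }L^{2}(F_{j}),
\end{equation*}
whose right-hand side has squared norm at most $C\big(Q_{j}(\psi|_{\CC_{(0,j)}})+Q_{j+1}(\psi|_{\CC_{(0,j+1)}})\big)$.

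The behaviour of the left-hand side is governed by the interface type, and this is where $\rm(H5)$ and $\rm(H6)$ enter. On an interface between two $a$-cubes, $\Z^{d}$-periodicity of $\vp_{a}$ forces $g_{j}|_{F_{j}}=g_{j+1}|_{F_{j}}$, so $(c_{j}-c_{j+1})^{2}\leq C(Q_{j}+Q_{j+1})$. On an interface between two $b$-cubes the same holds when the two-cube $b$-column has ground energy $E_{0}$---then Lemma~\ref{lem-key}, Lemma~\ref{lem-key-2} and $\rm(H6)$ give $\nu=1$ and hence equal traces---and otherwise $g_{j}|_{F_{j}}$ and $g_{j+1}|_{F_{j}}$ are linearly independent, yielding the stronger $c_{j}^{2}+c_{j+1}^{2}\leq C(Q_{j}+Q_{j+1})$. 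On an $a$--$b$ interface, linear independence of $g_{j}|_{F_{j}}$ and $g_{j+1}|_{F_{j}}$ is precisely $\rm(H5)$: if they were proportional one could glue $\vp_{a}$ and a suitable multiple of $\vp_{1}^{*}$ across $F_{j}$ into an $H^{1}$ trial function realising the Rayleigh quotient $E_{0}$ for the two-cube $ab$-operator, contradicting $\inf\si>E_{0}$. Interfaces incident to a defect cube require no independence statement: there $\|\psi|_{\CC_{(0,j\pm1)}}\|^{2}\leq(2\de_{0})^{-1}Q_{j\pm1}$, so that cube's trace on the common face is $O(Q_{j\pm1})$ and hence $c_{j}^{2}+c_{j\pm1}^{2}\leq C(Q_{j}+Q_{j\pm1})$. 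In every case $(c_{j}-c_{j+1})^{2}\leq C(Q_{j}+Q_{j+1})$, and at the bottom $c_{r}^{2}\leq(2\de_{0})^{-1}Q_{r}$.

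It remains to telescope along the chain. From $c_{j}=c_{r}+\sum_{k=r}^{j-1}(c_{k+1}-c_{k})$ and Cauchy--Schwarz, $c_{j}^{2}\leq2c_{r}^{2}+2\ell\sum_{k=r}^{R-1}(c_{k+1}-c_{k})^{2}\leq\big(\tfrac{1}{\de_{0}}+C\ell\big)(E-E_{0})$, using $\sum_{k}(c_{k+1}-c_{k})^{2}\leq C\sum_{j}Q_{j}(\psi|_{\CC_{(0,j)}})=C(E-E_{0})$ and $Q_{r}\leq E-E_{0}$. Summing over the $\ell$ cubes and adding $\sum_{j}\|h_{j}\|^{2}\leq\ga^{-1}(E-E_{0})$ gives $1=\|\psi\|^{2}=\sum_{j}(c_{j}^{2}+\|h_{j}\|^{2})\leq C\ell^{2}(E-E_{0})$, that is $E\geq E_{0}+C^{-1}\big(\tfrac{L+1}{2}-r\big)^{-2}$ (indeed for every odd $L$, not just for large $L$). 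The hard part will be the mixed column: what keeps the amplitude chain $(c_{j})$ from degenerating are exactly the two-cube ground-state gluing obstructions supplied by $\rm(H5)$ at $a$--$b$ junctions and by $\rm(H6)$, i.e.\ $\nu=1$, along long $b$-runs, and the technical core of the proof is to verify these linear-independence statements uniformly over the finitely many interface types, together with the uniform single-cube gap, trace and form-boundedness estimates.
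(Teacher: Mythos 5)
Your proof is correct, but it follows a genuinely different route from the paper's. The paper does \emph{not} apply Theorem~\ref{thm-lower-bound-gse-main} to the whole mixed column at once (you are right that this is impossible); instead it first runs an iteration that partitions $\SS$ into subsegments $\SS_{1},\dots,\SS_{K}$, each consisting of a short ``defect head'' (either a single cube with $\om\in\{a+\ep,b-\ep\}$, or an adjacent $a$--$b$ pair) followed by a homogeneous run of $a$'s or of $b$'s. Bracketing then reduces the claim to Theorem~\ref{thm-lower-bound-gse-main} applied to each subsegment, with $\inf\si(P_{0})>E_{0}$ supplied by \eqref{a-condition} or by $\rm(H5)$; all the analytic work (quasi one-dimensional Poincar\'{e} inequality, ground state transform, the boundary operator $T_{\la}$, and the $\nu=1$ discussion via $\rm(H6)$) is thus concentrated in Section~\ref{sec-lower-bound-gse}. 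You instead bypass the decomposition entirely and prove a discrete Poincar\'{e}-type inequality for the amplitude chain $(c_{j})$ obtained by projecting $\psi$ onto the single-cube ground states, with $\rm(H5)$ and $\rm(H6)$ entering as trace-matching versus linear-independence dichotomies at the finitely many interface types, and the anchor $c_{r}^{2}\leq(2\de_{0})^{-1}Q_{r}$ at the defect cube. This is more elementary (no elliptic boundary-value theory, no $T_{\la}$) and handles arbitrary mixtures in one pass, at the cost of re-deriving in discrete form what Theorem~\ref{thm-lower-bound-gse-main} already packages, and of a longer case analysis. Two technical points you should make explicit if you write this up: the single-cube ground states are continuous and bounded below by a positive constant on the \emph{closed} cube (so that $\|g_{j}|_{F_{j}}\|_{L^{2}(F_{j})}$ is bounded below and the linear-independence constants are uniform over the finitely many interface types), and the gluing argument showing that proportional face traces of two adjacent zero-energy (for $H-E_{0}$) ground states would force the two-cube operator to have ground energy $E_{0}$ --- both are true and consistent with facts the paper itself uses, but they carry the weight of your interface dichotomy.
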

\begin{proof}
Let $\om\in\{a,b,a+\ep,b-\ep\}^{\Z^{d}\cap\SS}$ satisfy $\om_{(0,r)}\in\{a+\ep,b-\ep\}$. We claim that there exist $K\in\N$ and  subsegments $\SS_{1},\dots,\SS_{K}$ satisfying following conditions:
\begin{itemize}
\item[\rm(i)] for each $k\in\{1,\dots,K\}$, there are $l_{k},m_{k}\in\Z\cap(-\frac{L}{2},\frac{L}{2})$ with $l_{k}\leq m_{k}$ such that $\SS_{k}=(-\frac{1}{2},\frac{1}{2})^{d-1}\times(l_{k}-\frac{1}{2},m_{k}+\frac{1}{2})$;

\item[\rm(ii)] $\SS_{1},\dots,\SS_{K}$ are pairwise disjoint and $\ol{\SS}=\cup_{k=1}^{K}\ol{\SS}_{k}$;

\item[\rm(iii)] for each $k\in\{1,\dots,K\}$, $\{\om_{(0,n)},n=l_{k},l_{k}+1,\dots,m_{k}\}$ satisfies one of the following two conditions:
\begin{itemize}
\item[$\bullet$] $\om_{(0,l_{k})}\in\{a+\ep,b-\ep\}$ and $\om_{(0,n)}=a$ for all $n=l_{k}+1,\dots,m_{k}$ or $\om_{(0,n)}=b$ for all $n=l_{k}+1,\dots,m_{k}$;

\item[$\bullet$] $\om_{(0,l_{k})},\om_{(0,l_{k}+1)}\in\{a,b\}$ with $\om_{(0,l_{k})}\neq\om_{(0,l_{k}+1)}$ and $\om_{(0,n)}=a$ for all $n=l_{k}+2,\dots,m_{k}$ or $\om_{(0,n)}=b$ for all $n=l_{k}+2,\dots,m_{k}$.
\end{itemize}
\end{itemize}
Indeed, the above claim is a consequence of the following iteration steps:

\noindent\textbf{Step 1.} By assumption $\om_{(0,r)}\in\{a+\ep,b-\ep\}$. Let $r_{1}\in(r-\frac{1}{2},\frac{L}{2})$ be such that $\om_{0,n}\in\{a+\ep,b-\ep\}$ for all $n=r,r+1,\dots,r_{1}$ and $\om_{(0,r_{1}+1)}\in\{a,b\}$. For each $n=r,r+1,\dots,r_{1}-1$, we set $(-\frac{1}{2},\frac{1}{2})^{d-1}\times(n-\frac{1}{2},n+\frac{1}{2})$ to be a subsegment.

\noindent\textbf{Step 2.} Let $r_{2}\geq r_{1}+1$ be such that $\om_{(0,r_{2})}=\om_{(0,r_{2}-1)}=\cdots=\om_{(0,r_{1}+1)}$ and $\om_{0,r_{2}+1}\neq\om_{(0,r_{2})}$.
\begin{itemize}
\item[\rm(i)] If $\om_{0,r_{2}+1}\in\{a+\ep,b-\ep\}$, we set $(-\frac{1}{2},\frac{1}{2})^{d-1}\times(r_{1}-\frac{1}{2},r_{2}+\frac{1}{2})$ to be a subsegment and proceed in the same way as in Step 1 with the initial $\om_{0,r_{2}+1}\in\{a+\ep,b-\ep\}$.

\item[\rm(ii)] If $\om_{0,r_{2}+1}\in\{a,b\}$, we set $(-\frac{1}{2},\frac{1}{2})^{d-1}\times(r_{1}-\frac{1}{2},r_{2}-\frac{1}{2})$ to be a subsegment and check $\om_{(0,r_{2}+2)}$.

    \begin{itemize}
    \item[$\bullet$] If $\om_{(0,r_{2}+2)}\in\{a+\ep,b-\ep\}$, we set $(-\frac{1}{2},\frac{1}{2})^{d-1}\times(r_{2}-\frac{1}{2},r_{2}+1+\frac{1}{2})$ to be a subsegment and proceed in the same way as in Step 1 with the initial $\om_{(0,r_{2}+2)}\in\{a+\ep,b-\ep\}$.
    \item[$\bullet$] If $\om_{(0,r_{2}+2)}\in\{a,b\}$, let $r_{3}\geq r_{2}+2$ be such that $\om_{(0,r_{3})}=\om_{(0,r_{3}-1)}=\cdots=\om_{(0,r_{2}+2)}$ and $\om_{0,r_{3}+1}\neq\om_{(0,r_{3})}$.

        \begin{itemize}
            \item[$\bullet$] If $\om_{0,r_{3}+1}\in\{a+\ep,b-\ep\}$, we set $(-\frac{1}{2},\frac{1}{2})^{d-1}\times(r_{2}-\frac{1}{2},r_{3}+\frac{1}{2})$ to be a subsegment and proceed in the same way as in Step 1 with the initial $\om_{0,r_{3}+1}\in\{a+\ep,b-\ep\}$.
            \item[$\bullet$] If $\om_{0,r_{3}+1}\in\{a,b\}$, we set $(-\frac{1}{2},\frac{1}{2})^{d-1}\times(r_{2}-\frac{1}{2},r_{3}-\frac{1}{2})$ to be a subsegment and check $\om_{(0,r_{3}+2)}$. Then, we are in the situation similar to Step 2$\rm(ii)$ and so we can keep iterating.
        \end{itemize}
    \end{itemize}
\end{itemize}

Clearly, any subsegment generated in the above iteration procedure is of the form $(l-\frac{1}{2},m+\frac{1}{2})$ for $l,m\in\Z\cap(r-\frac{1}{2},\frac{L}{2})$ with $l\leq m$ and satisfies one of the following two conditions:
\begin{itemize}
\item[$\bullet$] $\om_{(0,l)}\in\{a+\ep,b-\ep\}$ and $\om_{(0,n)}=a$ for all $n=l+1,\dots,m$ or $\om_{(0,n)}=b$ for all $n=l+1,\dots,m$;

\item[$\bullet$] $\om_{(0,l)},\om_{(0,l+1)}\in\{a,b\}$ with $\om_{(0,l)}\neq\om_{(0,l+1)}$ and $\om_{(0,n)}=a$ for all $n=l+2,\dots,m$ or $\om_{(0,n)}=b$ for all $n=l+2,\dots,m$.
\end{itemize}
Hence, the claim follows.

Now, by Lemma \ref{app-lem-bracketing}, we have $H_{\om,\SS}\geq\bigoplus_{k=1}^{K}H_{\om,\SS_{k}}$, which yields
\begin{equation*}
\inf\si(H_{\om,\SS})\geq\min_{k=1,\dots,K}\inf\si(H_{\om,\SS_{k}}).
\end{equation*}
For each $k\in\{1,\dots,K\}$, we can apply Theorem \ref{thm-lower-bound-gse-main} with \eqref{a-condition} and $\rm(H5)$ to conclude that there's some constant $C>0$ independent of the length of $\SS_{k}$ and $k$ (thus, independent of $\om$) such that
\begin{equation*}
\inf\si(H_{\om,\SS_{k}})\geq E_{0}+\frac{C}{(m_{k}-l_{k}+1)^{2}},
\end{equation*}
which leads to the result of the lemma.
\end{proof}

The following result is a counterpart of Lemma \ref{lem-estimate-key-1}.
\begin{lem}\label{lem-estimate-key-2}
Let $r\in\Z\cap(-\frac{L}{2},\frac{L}{2})$ and set
$\SS=(-\frac{1}{2},\frac{1}{2})^{d-1}\times(-\frac{L}{2},r+\frac{1}{2})$.
Then there exists some $C>0$ such that for all large $L\in2\N_{0}+1$ there holds
\begin{equation*}
\inf\si(H_{\om,\SS})\geq E_{0}+\frac{C}{\big(\frac{L+1}{2}+r\big)^{2}}
\end{equation*}
for all $\om\in\{a,b,a+\ep,b-\ep\}^{\Z^{d}\cap\SS}$ satisfying $\om_{(0,r)}\in\{a+\ep,b-\ep\}$.
\end{lem}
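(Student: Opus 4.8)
The plan is to transcribe the proof of Lemma \ref{lem-estimate-key-1} with the $d$-th coordinate orientation reversed, so that the iteration runs \emph{downward} from the distinguished site $r$ and the relevant lower bound for each piece comes from Theorem \ref{thm-lower-bound-gse-main-1} (whose geometry \eqref{domain-2} has $\Om_0$ sitting \emph{above} the tail $\Om_M$) instead of Theorem \ref{thm-lower-bound-gse-main}. Concretely, given $\om\in\{a,b,a+\ep,b-\ep\}^{\Z^d\cap\SS}$ with $\om_{(0,r)}\in\{a+\ep,b-\ep\}$, I would first produce $K\in\N$ and pairwise disjoint subsegments $\SS_1,\dots,\SS_K$ with $\ol{\SS}=\bigcup_{k=1}^K\ol{\SS}_k$, each of the form $\SS_k=(-\frac12,\frac12)^{d-1}\times(l_k-\frac12,m_k+\frac12)$ with integers $l_k\le m_k$ in $(-\frac L2,\frac L2)$, such that, reading $\SS_k$ from its top site $m_k$ downward, either $\om_{(0,m_k)}\in\{a+\ep,b-\ep\}$ and $n\mapsto\om_{(0,n)}$ is constantly $a$ or constantly $b$ on $\{l_k,\dots,m_k-1\}$, or $\om_{(0,m_k)},\om_{(0,m_k-1)}\in\{a,b\}$ with $\om_{(0,m_k)}\neq\om_{(0,m_k-1)}$ and $n\mapsto\om_{(0,n)}$ is constantly $a$ or constantly $b$ on $\{l_k,\dots,m_k-2\}$. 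This is obtained by running the iteration of Steps 1--2 in the proof of Lemma \ref{lem-estimate-key-1} with the order on $\Z\cap(-\frac L2,\frac L2)$ reversed, starting from $r$ and decreasing.

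Next, Lemma \ref{app-lem-bracketing} gives $H_{\om,\SS}\ge\bigoplus_{k=1}^K H_{\om,\SS_k}$, hence $\inf\si(H_{\om,\SS})\ge\min_{1\le k\le K}\inf\si(H_{\om,\SS_k})$. For each $k$ I would view $\SS_k$ as in \eqref{domain-2}, with $\Om_0$ the top one or two cubes carrying the distinguished values and $\Om_M$ the constant-$a$ or constant-$b$ tail underneath. The hypothesis of Theorem \ref{thm-lower-bound-gse-main-1}, namely $\inf\si(-\De_{\Om_0}+V_0 1_{\Om_0}+W_{\Om_{0M}}1_{\Om_0})>E_0$, holds in both shapes: when $\Om_0$ is a single cube it reads $E_{\vp_a}(a+\ep)>E_0$ or $E_{\vp_a}(b-\ep)>E_0$, which is exactly \eqref{a-condition}; when $\Om_0$ is two cubes with distinct $\{a,b\}$ values it is precisely assumption $\rm(H5)$ (up to the harmless translation noted after $\rm(H5)$). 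Theorem \ref{thm-lower-bound-gse-main-1} then furnishes a constant $C>0$ depending only on the two possible shapes of $\Om_0$ --- hence independent of $k$, of $\SS_k$, and of $\om$ --- with $\inf\si(H_{\om,\SS_k})\ge E_0+\frac{C}{(m_k-l_k+1)^2}$. Since $\SS$ contains exactly $\frac{L+1}2+r$ lattice cubes along the $d$-th axis, every $\SS_k$ satisfies $m_k-l_k+1\le\frac{L+1}2+r$, and combining the three displays yields $\inf\si(H_{\om,\SS})\ge E_0+\frac{C}{(\frac{L+1}2+r)^2}$ for all large $L\in2\N_0+1$ (largeness being what makes $\Om_M$ long enough for the bound from Theorem \ref{thm-lower-bound-gse-main-1} to be the one that is invoked).

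The only point that genuinely needs care --- the main obstacle --- is orientation bookkeeping: because $V_0$ and $u$ are \emph{not} assumed reflection symmetric here, one cannot reduce to Lemma \ref{lem-estimate-key-1} by a reflection of $\SS$, so the iteration must be carried out afresh in the downward direction and one must check that the distinguished block always emerges as the $\Om_0$-part lying above the constant tail $\Om_M$, matching \eqref{domain-2} rather than \eqref{aux-domain}; this is why Theorem \ref{thm-lower-bound-gse-main-1} (and not Theorem \ref{thm-lower-bound-gse-main}) is the right tool. Everything else --- the verification that the iteration terminates, that the produced pieces tile $\SS$, and that the tail of each piece is of the admissible constant type --- is a verbatim repetition of the corresponding parts of the proof of Lemma \ref{lem-estimate-key-1}.
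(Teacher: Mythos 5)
Your proposal is correct and is precisely what the paper intends: the paper gives no separate proof of this lemma, stating only that it is the counterpart of Lemma \ref{lem-estimate-key-1}, and your reversed-orientation transcription --- iterating downward from the distinguished site $r$, bracketing into subsegments whose distinguished block sits on top of a constant tail, and invoking Theorem \ref{thm-lower-bound-gse-main-1} with \eqref{a-condition} and $\rm(H5)$ --- is exactly the intended argument. Your count of $\frac{L+1}{2}+r$ cubes along the $d$-th axis and the observation that reflection symmetry is unavailable (hence the need for Theorem \ref{thm-lower-bound-gse-main-1} rather than a reflection reduction) are both on point.
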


With the help of Lemma \ref{lem-estimate-key-1} and Lemma \ref{lem-estimate-key-2}, we are able to prove the following result, which is the key to Lemma \ref{lem-estimate-key-4}.

\begin{lem}\label{lem-estimate-key-3}
There exists some $C>0$ such that for all large $L\in2\N_{0}+1$ there holds
\begin{equation}\label{estimate-key}
E_{L}(\om)\geq E_{0}+\frac{C}{L^{2}}
\end{equation}
for all $\om\in\{a,b,a+\ep,b-\ep\}^{\Z^{d}\cap\La_{L}}$ satisfying the property: for any $q\in\Z^{d-1}\cap(-\frac{L}{2},\frac{L}{2})^{d-1}$ there exist $r_{1},r_{2}\in\Z\cap(-\frac{L}{2},\frac{L}{2})$ such that $|r_{1}-r_{2}|=1$ (that is, $r_{1}$ and $r_{2}$ are adjacent) and both $\om_{(q,r_{1})}$ and $\om_{(q,r_{2})}$ belong to $\{a+\ep,b-\ep\}$.
\end{lem}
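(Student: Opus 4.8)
The plan is to use the bracketing of Lemma~\ref{app-lem-bracketing} twice, reducing the claim to the quasi-one-dimensional estimates of Lemma~\ref{lem-estimate-key-1} and Lemma~\ref{lem-estimate-key-2}.

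First, for $q\in\Z^{d-1}\cap(-\tfrac{L}{2},\tfrac{L}{2})^{d-1}$ set $\SS_{qL}=(q,0)+(-\tfrac12,\tfrac12)^{d-1}\times(-\tfrac{L}{2},\tfrac{L}{2})$. These strips tile $\La_{L}$ up to boundaries, so Lemma~\ref{app-lem-bracketing} gives $H_{\om,\La_{L}}\geq\bigoplus_{q}H_{\om,\SS_{qL}}$, hence $E_{L}(\om)\geq\min_{q}\inf\si(H_{\om,\SS_{qL}})$; it therefore suffices to show $\inf\si(H_{\om,\SS_{qL}})\geq E_{0}+CL^{-2}$ for each $q$, with $C$ independent of $q$, of $\om$, and of all large $L$. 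Fix $q$, and by hypothesis choose adjacent $r_{1}<r_{2}=r_{1}+1$ in $\Z\cap(-\tfrac{L}{2},\tfrac{L}{2})$ with $\om_{(q,r_{1})},\om_{(q,r_{2})}\in\{a+\ep,b-\ep\}$. Splitting $\SS_{qL}$ along the hyperplane $\{x_{d}=r_{1}+\tfrac12\}$ into $\SS^{-}=(q,0)+(-\tfrac12,\tfrac12)^{d-1}\times(-\tfrac{L}{2},r_{1}+\tfrac12)$ and $\SS^{+}=(q,0)+(-\tfrac12,\tfrac12)^{d-1}\times(r_{2}-\tfrac12,\tfrac{L}{2})$, a second application of Lemma~\ref{app-lem-bracketing} gives $\inf\si(H_{\om,\SS_{qL}})\geq\min\{\inf\si(H_{\om,\SS^{-}}),\inf\si(H_{\om,\SS^{+}})\}$.

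To estimate the two halves I would use that, since $V_{0}$ and $\vp_{a}$ are $\Z^{d}$-periodic, translation by $(-q,0)\in\Z^{d}$ is a unitary equivalence preserving the Mezincescu boundary condition; it carries $H_{\om,\SS^{\pm}}$ to the corresponding operator on a strip based at the origin, with $\om$ relabelled by the same translation (the relabelled configuration still taking values in $\{a,b,a+\ep,b-\ep\}$). The top cube of $\SS^{-}$ sits at $d$-th coordinate $r_{1}$ with a value in $\{a+\ep,b-\ep\}$, so Lemma~\ref{lem-estimate-key-2} with $r=r_{1}$ yields $\inf\si(H_{\om,\SS^{-}})\geq E_{0}+C(\tfrac{L+1}{2}+r_{1})^{-2}$; symmetrically, the bottom cube of $\SS^{+}$ sits at $r_{2}$ with such a value, so Lemma~\ref{lem-estimate-key-1} with $r=r_{2}$ yields $\inf\si(H_{\om,\SS^{+}})\geq E_{0}+C(\tfrac{L+1}{2}-r_{2})^{-2}$, where $C>0$ can be taken common to the two lemmas, valid for all large $L$, and---crucially---independent of $r_{1},r_{2}$. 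Since $L$ is odd and $r_{1}<r_{2}$ lie in $(-\tfrac{L}{2},\tfrac{L}{2})$ we have $-\tfrac{L-1}{2}\leq r_{1}<r_{2}\leq\tfrac{L-1}{2}$, so both $\tfrac{L+1}{2}+r_{1}$ and $\tfrac{L+1}{2}-r_{2}$ lie in $[1,L]$; hence each half, and therefore $\inf\si(H_{\om,\SS_{qL}})$, is bounded below by $E_{0}+CL^{-2}$. Taking the minimum over the finitely many admissible $q$ gives \eqref{estimate-key}.

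Granting Lemmas~\ref{lem-estimate-key-1} and~\ref{lem-estimate-key-2}, the argument is mostly bookkeeping; the points that matter are the uniformity of the constant $C$ in those lemmas with respect to the cut position $r$ (which traces back to the $M$-independence in Theorem~\ref{thm-lower-bound-gse-main} and to there being only finitely many shapes of admissible subsegments), and the elementary bound $\tfrac{L+1}{2}\pm r\leq L$ for $r\in\Z\cap(-\tfrac{L}{2},\tfrac{L}{2})$ with $L$ odd, which is what converts the two one-sided estimates into the claimed $L^{-2}$ lower bound.
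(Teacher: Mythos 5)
Your proposal is correct and follows essentially the same route as the paper: bracketing over the strips $\SS_{qL}$, then a second bracketing cut between the two adjacent cubes with values in $\{a+\ep,b-\ep\}$, followed by Lemma \ref{lem-estimate-key-1} and Lemma \ref{lem-estimate-key-2} and the elementary bound $\frac{L+1}{2}\pm r\leq L$. Your extra remarks on the translation unitary equivalence and on the uniformity of the constants only make explicit what the paper leaves implicit.
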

\begin{proof}
For large $L\in2\N_{0}+1$, let $\SS_{0L}=(-\frac{1}{2},\frac{1}{2})^{d-1}\times(-\frac{L}{2},\frac{L}{2})$ and set $\SS_{qL}=(q,0)+\SS_{0L}$ for $q\in\Z^{d-1}$. By Lemma \ref{app-lem-bracketing}, we find
\begin{equation*}
H_{\om,\La_{L}}\geq\bigoplus_{q\in\Z^{d-1}\cap(-\frac{L}{2},\frac{L}{2})^{d-1}}H_{\om,\SS_{qL}},
\end{equation*}
which leads to
\begin{equation}\label{estimate-eig}
E_{L}(\om)\geq\min_{q\in\Z^{d-1}\cap(-\frac{L}{2},\frac{L}{2})^{d-1}}\inf\si(H_{\om,\SS_{qL}}).
\end{equation}

Now, let $\om\in\{a,b,a+\ep,b-\ep\}^{\Z^{d}\cap\La_{L}}$ be as in the  lemma. Fix any $q\in\Z^{d-1}\cap(-\frac{L}{2},\frac{L}{2})^{d-1}$ and consider the operator $H_{\om,\SS_{qL}}$. Let $r_{1},r_{2}\in\Z\cap(-\frac{L}{2},\frac{L}{2})$ be such that $|r_{1}-r_{2}|=1$ and $\om_{(q,r_{1})},\om_{(q,r_{2})}\in\{a+\ep,b-\ep\}$. We may assume w.l.o.g that $r_{1}<r_{2}$. Chopping $\SS_{qL}$ into
\begin{equation*}
\SS_{qL1}=\bigg(-\frac{1}{2},\frac{1}{2}\bigg)^{d-1}\times\bigg(-\frac{L}{2},r_{1}+\frac{1}{2}\bigg),\quad\SS_{qL2}=\bigg(-\frac{1}{2},\frac{1}{2}\bigg)^{d-1}\times\bigg(r_{2}-\frac{1}{2},\frac{L}{2}\bigg)
\end{equation*}
and using Lemma \ref{app-lem-bracketing}, we arrive at
\begin{equation*}
\inf\si(H_{\om,\SS_{qL}})\geq\min\big\{\inf\si(H_{\om,\SS_{qL1}}),\inf\si(H_{\om,\SS_{qL2}})\big\}.
\end{equation*}
Then, we can apply Lemma \ref{lem-estimate-key-1} to $H_{\om,\SS_{qL2}}$ and Lemma \ref{lem-estimate-key-2} to $H_{\om,\SS_{qL1}}$ to conclude that
\begin{equation*}
\inf\si(H_{\om,\SS_{qL}})\geq E_{0}+\min\bigg\{\frac{C_{1}}{(\frac{L+1}{2}+r_{1})^{2}},\frac{C_{2}}{(\frac{L+1}{2}-r_{2})^{2}}\bigg\}\geq E_{0}+\frac{C}{L^{2}}
\end{equation*}
for some $C>0$ independent of $q$ and $L$. The result of the lemma then follows from \eqref{estimate-eig}.
\end{proof}

Finally, we prove Lemma \ref{lem-estimate-key-4}.

\begin{proof}[Proof of Lemma \ref{lem-estimate-key-4}]
Using the concavity of $\om\mapsto E_{L}(\om):[a,b]^{\Z^{d}\cap\La_{L}}\ra\R$ and Lemma \ref{lem-estimate-key-3}, the lemma follows from the two steps argument as in the proof of \cite[Lemma 5.3]{KN10}. We here sketch it for completeness.

Step 1. We first claim that \eqref{estimate-key-1} holds for all $\om\in[a,b]^{\Z^{d}\cap\La_{L}}$ satisfying the property: for any $q\in\Z^{d-1}\cap(-\frac{L}{2},\frac{L}{2})^{d-1}$ there exists $r_{1},r_{2}\in\Z\cap(-\frac{L}{2},\frac{L}{2})$ such that $|r_{1}-r_{2}|=1$ and both $\om_{(q,r_{1})}$ and $\om_{(q,r_{2})}$ belong to $[a+\ep,b-\ep]$, and if $\om_{(q,r)}\notin[a+\ep,b-\ep]$, then $\om_{(q,r)}\in\{a,b\}$. To show this, let $\om$ be as above. Set $\Ga(\om)=\{i\in\Z^{d}\cap\La_{L}|\om_{i}\in[a+\ep,b-\ep]\}$ and $K(\om)=\{a+\ep,b-\ep\}^{\Ga(\om)}$. Then, there are nonnegative coefficients $\{\mu_{\eta}\}_{\eta\in K(\om)}$ satisfying $\sum_{\eta\in K(\om)}\mu_{\eta}=1$ such that $\{\om_{i}\}_{i\in\Ga(\om)}=\sum_{\eta\in K(\om)}\mu_{\eta}\eta$. Then, by setting $\tilde{\eta}_{i}=\eta_{i}$ if $i\in\Ga(\om)$ and $\tilde{\eta}_{i}=\om_{i}$ if $i\notin\Ga(\om)$, we find $\om=\sum_{\eta\in K(\om)}\mu_{\eta}\tilde{\eta}$. Clearly, for each $\eta\in K(\om)$, $\tilde{\eta}$ is as in the statement of Lemma \ref{lem-estimate-key-3}. By the concavity of $\om\mapsto E_{L}(\om):[a,b]^{\Z^{d}\cap\La_{L}}\ra\R$ and Lemma \ref{lem-estimate-key-3}, we find \eqref{estimate-key-1}.

Step 2. Let $\om$ be as in the statement of Lemma \ref{lem-estimate-key-4}. Set $L(\om)=\{a,b\}^{(\Z^{d}\cap\La_{L})\bs\Ga(\om)}$. Then, there are nonnegative coefficients $\{\mu_{\eta}\}_{\eta\in L(\om)}$ satisfying $\sum_{\eta\in L(\om)}\mu_{\eta}=1$ such that $\{\om_{i}\}_{i\in(\Z^{d}\cap\La_{L})\bs\Ga(\om)}=\sum_{\eta\in L(\om)}\mu_{\eta}\eta$. Then, by setting $\tilde{\eta}_{i}=\eta_{i}$ if $i\notin\Ga(\om)$ and $\tilde{\eta}_{i}=\om_{i}$ if $i\in\Ga(\om)$, we find $\om=\sum_{\eta\in L(\om)}\mu_{\eta}\tilde{\eta}$. Clearly, for each $\eta\notin K(\om)$, $\tilde{\eta}$ is as in Step 1. By the concavity of $\om\mapsto E_{L}(\om):[a,b]^{\Z^{d}\cap\La_{L}}\ra\R$ and Step 1, we find \eqref{estimate-key-1}.
\end{proof}

We end this subsection by

\begin{rem}\label{rem-a-prob-estimate}
\begin{itemize}
\item[\rm(i)] We provide a more direct approach to an estimate similar to \eqref{main-estimate} when $\mu\in(1-\frac{1}{\rho},\frac{1}{\rho})$, where $\mu$ is given in \eqref{a-condition} and $\rho=\frac{1+\sqrt{5}}{2}$ is the golden ratio. Recall $L\in2\N_{0}+1$ is sufficiently large and $\P\big\{\om\in\Om\big|E_{L}(\om)\leq E\big\}\leq L^{d-1}\P\big\{\Om_{q}\big\}$
for any $q\in\Z^{d-1}\cap(-\frac{L}{2},\frac{L}{2})^{d-1}$, where
\begin{equation*}
\P\big\{\Om_{q}\big\}=\bigg\{\om\in[a,b]^{\Z^{d}\cap\SS_{qL}}\bigg|
\begin{aligned}
&\text{no adjacent}\,\,r_{1},r_{2}\in\Z\cap(-L/2,L/2)\\
&\text{satisfies}\,\,\om_{(q,r_{1})},\om_{(q,r_{2})}\in[a+\ep,b-\ep]
\end{aligned}\bigg\}.
\end{equation*}
Since the interval $(-\frac{L}{2},\frac{L}{2})$ contains exact $L$ integers, for any $\om\in\Om_{q}$ there are at most $\frac{L+1}{2}$ integers $r_{1},\dots,r_{\frac{L+1}{2}}$ in $(-\frac{L}{2},\frac{L}{2})$ such that $\om_{(q,r_{n})}\in[a+\ep,b-\ep]$ for all $n=1,\dots,\frac{L+1}{2}$ and no two of $\{r_{1},\dots,r_{\frac{L+1}{2}}\}$ are adjacent. Moreover, for any $\om\in\Om_{q}$ there are $\begin{pmatrix}L-N+1\\N\end{pmatrix}$ ways to choose $N$ integers $r_{1},\dots,r_{N}$ from $\Z\cap(-\frac{L}{2},\frac{L}{2})$ such that $\om_{q,r_{n}}\in[a+\ep,b-\ep]$ for all $n=1,\dots,N$ and no two of $\{r_{1},\dots,r_{N}\}$ are adjacent (this can be verified by induction on $N$). Therefore,
\begin{equation*}
\P\big\{\Om_{q}\big\}\leq\sum_{N=0}^{\frac{L+1}{2}}\begin{pmatrix}L-N+1\\N\end{pmatrix}(1-\mu)^{N}\mu^{L-N}.
\end{equation*}
Clearly,
\begin{equation*}
\sum_{N=0}^{\frac{L+1}{2}}\begin{pmatrix}L-N+1\\N\end{pmatrix}=F_{L+2},
\end{equation*}
where $\{F_{n}\}_{n\in\N}$ is the Fibonacci sequence. It is well-known (see e.g. \cite{PL07}) that
\begin{equation*}
F_{n}=\bigg\lfloor\frac{\rho^{n}}{\sqrt{5}}+\frac{1}{2}\bigg\rfloor,\quad n\in\N,
\end{equation*}
where $\lfloor x\rfloor$ is the largest integer not greater than $x$. Setting $\mu_{*}=\max\{\mu,1-\mu\}$, we then deduce from $\mu\in(1-\frac{1}{\rho},\frac{1}{\rho})$ that $\rho\mu_{*}<1$. It then follows that there's some $C_{*}>0$ such that
\begin{equation*}
\P\big\{\Om_{q}\big\}\leq\bigg(\frac{\rho^{L+2}}{\sqrt{5}}+\frac{1}{2}\bigg)\mu_{*}^{L}\leq C_{*}(\rho\mu_{*})^{L}
\end{equation*}
for all large $L\in2\N_{0}+1$. Setting $L=c(E-E_{0})^{-1/2}$, we find
\begin{equation*}
\P\big\{\om\in\Om\big|E_{L}(\om)\leq E\big\}\leq c^{d-1}(E-E_{0})^{-(d-1)/2}C_{*}(\rho\mu_{*})^{c(E-E_{0})^{-1/2}}.
\end{equation*}

\item[\rm(ii)] If the common distribution of the i.i.d random variables $\{\om_{i}\}_{i\in\Z^{d}}$ has a continuous density, then we can find some $\ep>0$ such that $\mu\in(1-\frac{1}{\rho},\frac{1}{\rho})$.
\end{itemize}
\end{rem}

\subsection{Bernoulli Models}\label{subsec-B-model}

We consider Bernoulli models, that is, the i.i.d random variables $\{\om_{i}\}_{i\in\Z^{d}}$ satisfy
\begin{equation*}
\begin{split}
&\P\{\om\in\Om|\om_{*}=a\}+\P\{\om\in\Om|\om_{*}=b\}=1,\\
&\P\{\om\in\Om|\om_{*}=a\}\P\{\om\in\Om|\om_{*}=b\}>0,
\end{split}
\end{equation*}
where $\om_{*}$ is the universal representation of $\{\om_{i}\}_{i\in\Z^{d}}$. Theorem \ref{thm-Lifshitz-tail-non} in this case is restated as

\begin{thm}\label{thm-Lifshitz-tail-non-optimal-B}
If the i.i.d random variables $\{\om_{i}\}_{i\in\Z^{d}}$ are Bernoulli distributed, then
\begin{equation*}
\limsup_{E\da E_{0}}\frac{\ln|\ln N(E)|}{\ln(E-E_{0})}\leq-\frac{1}{2}.
\end{equation*}
\end{thm}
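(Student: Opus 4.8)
The plan is to run the same scheme as for the non-Bernoulli case (proof of Theorem \ref{thm-Lifshitz-tail-non-optimal}), with two-cube $a$--$b$ defects playing the role the interior sites $[a+\ep,b-\ep]$ play there. First I would record the standard reduction: for every $L\in 2\N_{0}+1$, \eqref{IDS-expectation} together with $\rm(H4)$ (which gives the uniform Weyl bound $\sup_{\om}N(H_{\om,\La_{L}},E)\le C L^{d}$ for $E$ in a bounded neighbourhood of $E_{0}$, since $N(H_{\om,\La_{L}},E)=0$ when $\inf\si(H_{\om,\La_{L}})>E$) yields
\begin{equation*}
N(E)\le C\,\P\big\{\om\in\Om\,\big|\,E_{L}(\om)\le E\big\},\qquad E_{L}(\om):=\inf\si(H_{\om,\La_{L}}).
\end{equation*}
Thus the statement reduces to a deterministic lower bound for $E_{L}(\om)$ on a set of overwhelming probability.

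I would take as good set the Bernoulli substitute for the one of Lemma \ref{lem-estimate-key-4}: those $\om\in\{a,b\}^{\Z^{d}\cap\La_{L}}$ for which every column $\SS_{qL}=(q,0)+(-\tfrac12,\tfrac12)^{d-1}\times(-\tfrac{L}{2},\tfrac{L}{2})$, $q\in\Z^{d-1}\cap(-\tfrac{L}{2},\tfrac{L}{2})^{d-1}$, contains two adjacent sites with different $\om$-values, i.e.\ no column is $\om$-constant. (The interpolation/concavity step used in the proof of Lemma \ref{lem-estimate-key-4} is not needed here, $\om$ already taking only the extreme values.) The claim is that $E_{L}(\om)\ge E_{0}+C/L^{2}$ on this set for all large $L$. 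Its complement has probability at most $L^{d-1}(p^{L}+q^{L})$, where $p=\P\{\om\in\Om\,|\,\om_{*}=a\}$, $q=1-p$; choosing $L=L(E)\in 2\N_{0}+1$ comparable to $(E-E_{0})^{-1/2}$ so that $C/L^{2}>E-E_{0}$, the reduction gives $|\ln N(E)|\ge c(E-E_{0})^{-1/2}$ for $E$ close to $E_{0}$, hence $\limsup_{E\da E_{0}}\ln|\ln N(E)|/\ln(E-E_{0})\le-\tfrac12$.

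For the deterministic bound: by Lemma \ref{app-lem-bracketing}, $H_{\om,\La_{L}}\ge\bigoplus_{q}H_{\om,\SS_{qL}}$, so it suffices to bound $\inf\si(H_{\om,\SS_{qL}})$ from below for one column whose $\om$-sequence is not constant. Writing that sequence as consecutive constant runs $R_{1},\dots,R_{K}$ with $K\ge2$, I would decompose $\SS_{qL}$ (again via Lemma \ref{app-lem-bracketing}) into finitely many quasi one-dimensional pieces of the $\Om_{0M}$-form of Theorems \ref{thm-lower-bound-gse-main} and \ref{thm-lower-bound-gse-main-1} (or of their two-sided variant described below): a two-cube $a$--$b$ defect $\Om_{0}$ straddling some junction $R_{i}\,|\,R_{i+1}$, flanked by runs of constant value $a$ or $b$; a short induction on $K$ shows one can always do this with no leftover ``pure'' piece (in particular a length-one middle run is absorbed into $\Om_{0}$, with the full neighbouring run on one side serving as a flank). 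The hypothesis $\inf\si\big(-\De_{\Om_{0}}+V_{0}1_{\Om_{0}}+W_{\Om_{0M}}1_{\Om_{0}}\big)>E_{0}$ required by those theorems is precisely $\rm(H5)$, and when a flank carries the value $b$ the control of the ground state of $H_{b,\Om_{M}}$ needed there is supplied by Lemma \ref{lem-key}, Lemma \ref{lem-key-2} and $\rm(H6)$ exactly as in Theorem \ref{thm-lower-bound-estimate-1}. Since only $\Om_{0}$'s of bounded size with finitely many admissible configurations occur, the constant $C$ from each piece is uniform in $L$ and in $\om$, and bracketing gives $E_{L}(\om)\ge E_{0}+C/L^{2}$.

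The step I expect to be the main obstacle is this decomposition and the two-sided version of Theorem \ref{thm-lower-bound-gse-main} it forces. A single $a$- or $b$-site carries no spectral gap, so the seed of each piece must be a genuine two-cube $a$--$b$ defect; hence a column in which a sign change is ``isolated'' (for instance a lone $b$ between two long runs of $a$) cannot be cut by a one-sided scheme without stranding a pure $a$-run—or, in the regime of Lemma \ref{lem-key}$\rm(i)$, a pure $b$-run—which would drag $\inf\si$ back to $E_{0}$. The fix is a variant of Theorem \ref{thm-lower-bound-gse-main} in which $\Om_{0}$ is flanked by constant runs on \emph{both} sides; its proof mirrors that of Theorem \ref{thm-lower-bound-estimate} (resp.\ Theorem \ref{thm-lower-bound-estimate-1})—the variant Poincar\'{e} inequality \eqref{estimate-1} and the ground-state transform on each flank, Green's formula, and the positivity estimate of Lemma \ref{lem-estimate}—now carried out with the two interface (Dirichlet-to-Neumann) operators attached to the two components of $\partial\Om_{0}$. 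Granting this, every non-constant $\{a,b\}$-column splits into admissible pieces and the argument closes as above.
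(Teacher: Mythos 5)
Your scheme is viable and reaches the right exponent, but it is genuinely different from the paper's proof, and the difference is exactly where you locate it. The paper never proves (or needs) a two-sided version of Theorem \ref{thm-lower-bound-gse-main}: instead it strengthens the combinatorial requirement on each column. The good event in Lemma \ref{lem-estimate-key-7} asks that every column contain \emph{four consecutive} sites $r_{1}<r_{2}<r_{3}<r_{4}$ with $\om_{(q,r_{1})}\neq\om_{(q,r_{2})}$ and $\om_{(q,r_{3})}\neq\om_{(q,r_{4})}$; the column is then chopped between $r_{2}$ and $r_{3}$, so each half-column has an $a$--$b$ defect sitting at its cut end, and Lemmas \ref{lem-estimate-key-5}--\ref{lem-estimate-key-6} decompose each half one-sidedly into pieces of the form [defect pair at the inner end][constant run], to which Theorems \ref{thm-lower-bound-gse-main} and \ref{thm-lower-bound-gse-main-1} (with $\rm(H5)$, and with Lemma \ref{lem-key}, Lemma \ref{lem-key-2}, $\rm(H6)$ for $b$-runs) apply as stated; the probability of the bad event is then estimated by cutting each column into independent blocks of four consecutive sites, each bad with probability $1-4\mu_{a}^{2}\mu_{b}^{2}<1$. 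So the paper trades a slightly stronger combinatorial condition (hence a marginally different, but still exponentially small, probability bound) for never having to handle a defect flanked by long constant runs on both sides. Your route buys a simpler probabilistic step (``no constant column'', failure probability $\le L^{d-1}(p^{L}+q^{L})$) and no concavity/interpolation, at the price of a new analytic input that the paper deliberately avoids.

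That new input — the two-sided variant with interfaces $S_{\pm}$ and a $2\times2$ block Dirichlet-to-Neumann operator — is plausible and your sketch names the right ingredients, but as written it is the one unproved step, and it needs two pieces of care beyond ``mirror the proof''. First, when the two flanks carry different values (an $a$-run on one side, a $b$-run on the other) and Lemma \ref{lem-key}$\rm(ii)$ holds, the ground-state-transform estimate is only available on the $a$-flank; there you should instead bracket off the $b$-flank (which then has a uniform gap $\de$) and run the one-sided Theorem \ref{thm-lower-bound-gse-main} on [$a$-flank][defect], rather than the symmetric two-interface argument. Second, the uniformity in the two flank lengths $M_{\pm}$ (including the regime where one flank is short, so that $\ep\ge 4/M_{\pm}$ fails on that side) has to be checked as in the ``not-very-large $M$'' step of Theorem \ref{thm-lower-bound-estimate}. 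With those points supplied, and with the short induction on runs making precise that every non-constant $\{a,b\}$-column splits into blocks [constant run][$a$--$b$ pair][constant run], your argument closes; alternatively, you could keep your framework but strengthen your good event to the paper's four-site condition, after which the existing one-sided theorems suffice and no new lemma is needed.
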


The proof of the above theorem is based on the following

\begin{lem}\label{lem-estimate-key-7}
There exists some $C>0$ such that for all large $L\in2\N_{0}+1$ there holds
\begin{equation}\label{estimate-key-2}
E_{L}(\om)\geq E_{0}+\frac{C}{L^{2}}
\end{equation}
for all $\om\in\{a,b\}^{\Z^{d}\cap\La_{L}}$ satisfying the property: for any $q\in\Z^{d-1}\cap(-\frac{L}{2},\frac{L}{2})^{d-1}$ there exist four consecutive integers $r_{1},r_{2},r_{3},r_{4}\in\Z\cap(-\frac{L}{2},\frac{L}{2})$ with $r_{1}<r_{2}<r_{3}<r_{4}$ such that $\om_{(q,r_{1})}\neq\om_{(q,r_{2})}$ and $\om_{(q,r_{3})}\neq\om_{(q,r_{4})}$.
\end{lem}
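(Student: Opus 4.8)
The plan is to follow the blueprint of Lemma \ref{lem-estimate-key-3}, with one structural change forced by the Bernoulli setting: no cube carries a value in the open interval $(a,b)$, so the single‑cube ``interior‑value'' defects used there are unavailable, and every defect must now be a pair of \emph{adjacent} cubes carrying the \emph{distinct} labels $a$ and $b$. The bound $\inf\si>E_{0}$ for the operator on such a pair is exactly $\rm(H5)$, and it will feed into Theorem \ref{thm-lower-bound-gse-main} and Theorem \ref{thm-lower-bound-gse-main-1} (with $m=5$) in the same way that \eqref{a-condition} fed into the proof of Lemma \ref{lem-estimate-key-1}. This is also why the hypothesis here demands \emph{four} consecutive integers supporting \emph{two} mismatches rather than two adjacent interior‑valued cubes: it is precisely what is needed to split a column so that each half carries a mismatched pair at the cut.

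First I would bracket along the first $d-1$ coordinates. For large $L\in2\N_{0}+1$ set $\SS_{qL}=(q,0)+(-\frac12,\frac12)^{d-1}\times(-\frac L2,\frac L2)$ for $q\in\Z^{d-1}$, so that Lemma \ref{app-lem-bracketing} gives $H_{\om,\La_{L}}\geq\bigoplus_{q\in\Z^{d-1}\cap(-\frac L2,\frac L2)^{d-1}}H_{\om,\SS_{qL}}$; it therefore suffices to prove $\inf\si(H_{\om,\SS_{qL}})\geq E_{0}+CL^{-2}$ uniformly in $q$. Fix $q$, let $r_{1}<r_{2}<r_{3}<r_{4}$ be the four consecutive integers supplied by the hypothesis, and cut $\SS_{qL}$ along the hyperplane $\{x_{d}=r_{2}+\frac12\}$ into its lower part $\SS^{-}$ and upper part $\SS^{+}$; since $r_{3}=r_{2}+1$, Lemma \ref{app-lem-bracketing} yields $H_{\om,\SS_{qL}}\geq H_{\om,\SS^{-}}\oplus H_{\om,\SS^{+}}$, and $\SS^{-}$ carries the mismatched pair $\CC_{(q,r_{1})}\cup\CC_{(q,r_{2})}$ at its upper end while $\SS^{+}$ carries the mismatched pair $\CC_{(q,r_{3})}\cup\CC_{(q,r_{4})}$ at its lower end.

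Next I would decompose $\SS^{+}$ into consecutive sub‑segments by an iteration of the same type as in the proof of Lemma \ref{lem-estimate-key-1}, which here is simpler since no intermediate value occurs: starting from the defect pair $\{r_{3},r_{4}\}$, a sub‑segment consists of a mismatched pair $\{s,s+1\}$ followed by the maximal constant run $\om_{(q,s+2)}=\dots=\om_{(q,k)}$ truncated to $\{s+2,\dots,k-1\}$, so that $\{k,k+1\}$ is again a mismatched pair and opens the next sub‑segment; the final sub‑segment absorbs the constant tail up to $x_{d}=\frac L2$, and a sub‑segment with empty bulk is just a mismatched pair. Up to a $\Z^{d}$‑translation (a unitary equivalence, as $\vp_{a}$ and $V_{0}$ are $\Z^{d}$‑periodic and $u$ is supported in $\CC_{0}$), each sub‑segment is a domain $\Om_{0M}$ as in Theorem \ref{thm-lower-bound-gse-main} with $m=5$, whose defect potential on $\Om_{0}$ is $t_{1}u(\cdot-i_{1})+t_{2}u(\cdot-i_{2})$ with $t_{1},t_{2}\in\{a,b\}$, $t_{1}\neq t_{2}$, and whose bulk potential on $\Om_{M}$ is $a\sum u(\cdot-i)$ or $b\sum u(\cdot-i)$; the hypothesis $\inf\si(-\De_{\Om_{0}}+V_{0}1_{\Om_{0}}+W_{\Om_{0M}}1_{\Om_{0}})>E_{0}$ of that theorem is exactly $\rm(H5)$, and when the bulk label is $b$ one invokes the argument of Theorem \ref{thm-lower-bound-estimate-1} (which uses Lemma \ref{lem-key}, Lemma \ref{lem-key-2} and $\rm(H6)$). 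Hence, for each sub‑segment $\SS$, Theorem \ref{thm-lower-bound-gse-main} gives $\inf\si(H_{\om,\SS})\geq E_{0}+C_{0}\ell_{\SS}^{-2}$, where $\ell_{\SS}$ is the number of cubes in $\SS$ and $C_{0}>0$ depends only on the finitely many possible defect configurations (and on whether the defect sits below or above its bulk), while a bare‑pair sub‑segment satisfies $\inf\si(H_{\om,\SS})\geq E_{0}+\de_{0}$ for a fixed $\de_{0}>0$ by $\rm(H5)$. Treating $\SS^{-}$ the same way with Theorem \ref{thm-lower-bound-gse-main-1} (defect above its bulk) in place of Theorem \ref{thm-lower-bound-gse-main}, and using $\ell_{\SS}\leq L$, I obtain $\inf\si(H_{\om,\SS^{\pm}})\geq E_{0}+C_{1}L^{-2}$ for a uniform $C_{1}>0$ once $L$ is large enough that $\de_{0}\geq C_{1}L^{-2}$; combining the three bracketings then gives \eqref{estimate-key-2}.

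The step I expect to be the true obstacle is the combinatorial bookkeeping of the decomposition: one must check that every finite $\{a,b\}$‑word with a prescribed $ab$‑ or $ba$‑pair at one end can be split into blocks of the form (adjacent unequal pair)(constant run), with the first block anchored at the prescribed pair, and that the constant $C_{0}$ can be chosen independently of $\om$ and of the block lengths — this last point relying on there being only finitely many defect configurations. All the genuine analysis, namely the quasi‑one‑dimensional Poincar\'{e}‑type estimate responsible for the $M^{-2}$ decay, is already packaged in Theorem \ref{thm-lower-bound-gse-main} and Theorem \ref{thm-lower-bound-gse-main-1}; the only new feature compared with Lemma \ref{lem-estimate-key-3} is that single‑cube defects are no longer available, and this is exactly compensated by the ``two mismatches'' in the hypothesis.
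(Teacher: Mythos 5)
Your proposal is correct and follows essentially the same route as the paper: bracket over the $d-1$ transverse directions, cut each column between $r_{2}$ and $r_{3}$ so that each half ends in a mismatched pair, and then control each half by the subsegment decomposition (mismatched pair plus constant run) fed into Theorem \ref{thm-lower-bound-gse-main} and Theorem \ref{thm-lower-bound-gse-main-1} with $m=5$ via $\rm(H5)$ (and $\rm(H6)$ for bulk label $b$). The only cosmetic difference is that the paper packages the one-sided estimates as Lemma \ref{lem-estimate-key-5} and Lemma \ref{lem-estimate-key-6}, whose proofs you have inlined.
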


We postpone the proof of Lemma \ref{lem-estimate-key-7} to the proof of Theorem \ref{thm-Lifshitz-tail-non-optimal-B}.

\begin{proof}[Proof of Theorem \ref{thm-Lifshitz-tail-non-optimal-B}]
It suffices to give a proper estimate for $\P\{\om\in\Om|E_{L}(\om)\leq E\}$ for large $L\in2\N_{0}+1$. Let $E>E_{0}$ and set $L=c(E-E_{0})^{-1/2}$ for some $c>0$ with $c^{2}<C$, where $C>0$ is the same as in \eqref{estimate-key-2}. We assume that $E$ is close to $E_{0}$ so that $L$ is large. If $\om\in\{a,b\}^{\Z^{d}\cap\La_{L}}$ satisfies Lemma \ref{lem-estimate-key-7}$\rm(ii)$, we deduce from \eqref{estimate-key-2} that $E_{L}(\om)>E$. Therefore,
\begin{equation*}
\begin{split}
&\P\big\{\om\in\Om\big|E_{L}(\om)\leq E\big\}\\
&\quad\quad\leq\P\Bigg\{\om\in\{a,b\}^{\Z^{d}\cap\La_{L}}\Bigg|
\begin{aligned}
&\exists q\in\Z^{d-1}\cap(-L/2,L/2)^{d-1}\,\,\text{s.t.}\,\,\text{any four consecutive integers}\\ &r_{1},r_{2},r_{3},r_{4}\in\Z\cap(-L/2,L/2)\,\,\text{with}\,\,r_{1}<r_{2}<r_{3}<r_{4}\\
&\text{satisfies}\,\,\om_{(q,r_{1})}=\om_{(q,r_{2})}\,\,\text{or}\,\,\om_{(q,r_{3})}=\om_{(q,r_{4})}
\end{aligned}\Bigg\}\\
&\quad\quad\leq\sum_{q\in\Z^{d-1}\cap(-\frac{L}{2},\frac{L}{2})^{d-1}}\P\Bigg\{\om\in\{a,b\}^{\Z^{d}\cap\SS_{qL}}\Bigg|
\begin{aligned}
&\text{any four consecutive integers}\,\,r_{1},r_{2},r_{3},r_{4}\in\\
&\Z\cap(-L/2,L/2)\,\,\text{with}\,\,r_{1}<r_{2}<r_{3}<r_{4}\\
&\text{satisfies}\,\,\om_{(q,r_{1})}=\om_{(q,r_{2})}\,\,\text{or}\,\,\om_{(q,r_{3})}=\om_{(q,r_{4})}
\end{aligned}\Bigg\}\\
&\quad\quad\triangleq\sum_{q\in\Z^{d-1}\cap(-\frac{L}{2},\frac{L}{2})^{d-1}}\P\big\{\Om_{q}\big\}\\
&\quad\quad=L^{d-1}\P\big\{\Om_{q_{0}}\big\}
\end{split}
\end{equation*}
for any $q_{0}\in\Z^{d-1}\cap(-\frac{L}{2},\frac{L}{2})^{d-1}$.

To find an upper bound for $\P\big\{\Om_{q_{0}}\big\}$, we use the argument as in the proof of Theorem \ref{thm-Lifshitz-tail-non-optimal}. Let $N$ be the largest integer such that $4N\leq\frac{L}{2}$. For $n=1,2,\dots,N$, we set
\begin{equation*}
I_{n}=\big\{4(n-1)+1,4(n-1)+2,4(n-1)+3,4(n-1)+4\big\}
\end{equation*}
and for $n=-1,-2,\dots,-N$, we set
\begin{equation*}
I_{n}=\big\{4(n+1)-4,4(n+1)-3,4(n+1)-2,4(n+1)-1\big\}.
\end{equation*}
That is, we decompose the sets $\{1,\dots,4N\}$ and $\{-4N,\dots,-1\}$ into disjoint sets such that each such set consists of four consecutive integers. Then, for any $n\in\{-N,\dots,N\}\bs\{0\}$, we can simply write
\begin{equation*}
I_{n}=\big\{r_{n1},r_{n2},r_{n3},r_{n4}\big\}
\end{equation*}
with $r_{n1}<r_{n2}<r_{n3}<r_{n4}$. Moreover, for any $m,n\in\{-N,\dots,N\}\bs\{0\}$ with $m\neq n$, $\om_{(q_{0},I_{m})}$ and $\om_{(q_{0},I_{n})}$ are independent. With these, we find
\begin{equation*}
\begin{split}
\P\big\{\Om_{q_{0}}\big\}&\leq\P\Bigg\{\om\in[a,b]^{\Z^{d}\cap\SS_{q_{0}L}}\Bigg|
    \begin{aligned}
    &\forall n\in\{-N,-N+1,\dots,N-1,N\}\bs\{0\},\\
    &\om_{(q_{0},r_{n1})}=\om_{(q_{0},r_{n2})}\,\,\text{or}\,\,\om_{(q_{0},r_{n3})}=\om_{(q_{0},r_{n4})}
    \end{aligned}\Bigg\}=\Big(P\big\{\Om_{q_{0}}(n_{0})\big\}\Big)^{2N}
\end{split}
\end{equation*}
for any $n_{0}\in\{-N,\dots,N\}\bs\{0\}$, where
\begin{equation*}
\P\big\{\Om_{q_{0}}(n)\big\}=\Big\{\om\in\{a,b\}^{(q_{0},I_{n})}\Big|\om_{(q_{0},r_{n1})}=\om_{(q_{0},r_{n2})}\,\,\text{or}\,\,\om_{(q_{0},r_{n3})}=\om_{(q_{0},r_{n4})}\Big\}
\end{equation*}
for $n\in\{-N,\dots,N\}\bs\{0\}$. It's not hard to check that
$\P\big\{\Om_{q_{0}}(n_{0})\big\}=1-4\mu_{a}^{2}\mu_{b}^{2}\in\big[\frac{3}{4},1\big)$, where $\mu_{a}=\P\{\om\in\Om|\om_{*}=a\}$ and $\mu_{b}=\P\{\om\in\Om|\om_{*}=b\}=1-a$. Therefore, by setting $\mu=1-4\mu_{a}^{2}\mu_{b}^{2}$, we arrive at
$\P\big\{\Om_{q_{0}}\big\}\leq\mu^{2N}\leq\mu^{-\frac{7}{4}}\mu^{\frac{L}{4}}$,
where we used $8N\geq L-7$. Consequently, recalling $L=c(E-E_{0})^{-1/2}$, we find
\begin{equation*}
\P\big\{\om\in\Om\big|E_{L}(\om)\leq E\big\}\leq L^{d-1}\mu^{-\frac{7}{4}}\mu^{\frac{L}{4}}=c^{d-1}(E-E_{0})^{-(d-1)/2}\mu^{-\frac{7}{4}}\mu^{\frac{c}{4}(E-E_{0})^{-1/2}}
\end{equation*}
for all $E>E_{0}$ with $E-E_{0}$ small, which leads to the result.
\end{proof}

The rest of this subsection is devoted to the proof of Lemma \ref{lem-estimate-key-7}. We start with

\begin{lem}\label{lem-estimate-key-5}
Let $r,r+1\in\Z\cap(-\frac{L}{2},\frac{L}{2})$ and set
$\SS=(-\frac{1}{2},\frac{1}{2})^{d-1}\times(r-\frac{1}{2},\frac{L}{2})$.
Then there exists some $C>0$ such that for all large $L\in2\N_{0}+1$ there holds
\begin{equation*}
\inf\si(H_{\om,\SS})\geq E_{0}+\frac{C}{\big(\frac{L+1}{2}-r\big)^{2}}
\end{equation*}
for all $\om\in\{a,b\}^{\Z^{d}\cap\SS}$ satisfying $\om_{(0,r)}\neq\om_{(0,r+1)}$.
\end{lem}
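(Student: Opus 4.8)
The plan is to reduce Lemma \ref{lem-estimate-key-5} to Theorem \ref{thm-lower-bound-gse-main} by a chopping argument entirely parallel to the one in the proof of Lemma \ref{lem-estimate-key-1}, the only change being that a mismatched pair of adjacent cubes $\CC_{(0,r)},\CC_{(0,r+1)}$ now plays the role that a single ``intermediate''-valued site played there. Fix $\om\in\{a,b\}^{\Z^{d}\cap\SS}$ with $\om_{(0,r)}\neq\om_{(0,r+1)}$ and identify $\SS$ with the string of cubes $\CC_{(0,n)}$, $n=r,\dots,\frac{L-1}{2}$. I would decompose $\SS$ greedily into subsegments $\SS_{1},\dots,\SS_{K}$ of the form $(-\frac{1}{2},\frac{1}{2})^{d-1}\times(l_{k}-\frac{1}{2},m_{k}+\frac{1}{2})$ with $r=l_{1}<l_{2}<\cdots$, such that $\om_{(0,l_{k})}\neq\om_{(0,l_{k}+1)}$ and $\om_{(0,\cdot)}$ is constant (with value $a$ or $b$) on $\{l_{k}+2,\dots,m_{k}\}$, this run being allowed to be empty (i.e.\ $m_{k}=l_{k}+1$, in which case $\SS_{k}$ is just the pair). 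Starting from $l_{1}=r$: given $l_{k}$, if $l_{k}+2>\frac{L-1}{2}$ stop with $m_{k}=l_{k}+1$; otherwise let $m$ be the end of the longest homogeneous run of $\om$ beginning at $l_{k}+2$, and if $m=\frac{L-1}{2}$ stop with $m_{k}=m$, while if $m<\frac{L-1}{2}$ (so $\om_{(0,m+1)}\neq\om_{(0,m)}$) set $m_{k}=m-1$ and $l_{k+1}=m$, thereby ``giving back'' the cube $\CC_{(0,m)}$ so that $\SS_{k+1}$ again starts with a mismatched pair. Since $l_{k}$ strictly increases this terminates, and one checks (as in the proof of Lemma \ref{lem-estimate-key-1}) that the $\SS_{k}$ are pairwise disjoint with $\ol{\SS}=\bigcup_{k}\ol{\SS_{k}}$.

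By the bracketing of Lemma \ref{app-lem-bracketing}, $H_{\om,\SS}\geq\bigoplus_{k}H_{\om,\SS_{k}}$, so it suffices to bound $\inf\si(H_{\om,\SS_{k}})$ from below for each $k$; after an integer translation in the $d$-th direction (which preserves $V_{0}$ and the Mezincescu boundary condition defined via $\vp_{a}$) two cases occur. If the run is empty, $\SS_{k}$ is a copy of the domain $\SS_{0}$ of $\rm(H5)$ carrying a mismatched pair, so $\inf\si(H_{\om,\SS_{k}})>E_{0}$ by $\rm(H5)$; only the two patterns $(a,b)$ and $(b,a)$ occur, so this gives a uniform gap $\de_{0}>0$. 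If the run is nonempty, $\SS_{k}$ is a copy of the domain $\Om_{0M}$ of Theorem \ref{thm-lower-bound-gse-main} with $m=5$ — with $\Om_{0}$ the mismatched pair and $\Om_{M}$ the homogeneous run — and the hypothesis $\inf\si(-\De_{\Om_{0}}+V_{0}1_{\Om_{0}}+W_{\Om_{0M}}1_{\Om_{0}})>E_{0}$ of that theorem is again $\rm(H5)$, while $\rm(H6)$ (automatic in case $\rm(II)$ by Lemma \ref{lem-key-2}, assumed in case $\rm(III)$) handles the subcase where the run carries the value $b$. Theorem \ref{thm-lower-bound-gse-main} then yields $\inf\si(H_{\om,\SS_{k}})\geq E_{0}+C_{k}M_{k}^{-2}$, with $M_{k}$ comparable to the number of cubes of $\SS_{k}$ and $C_{k}$ depending only on the finitely many possible potential patterns on $\Om_{0}$ and $\Om_{M}$, hence $C_{k}\geq C_{*}$ for a fixed $C_{*}>0$.

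Finally, since every $\SS_{k}$ has at most $\frac{L+1}{2}-r$ cubes, the two estimates combine to give, for a suitable $\om$-, $r$- and $L$-independent $C>0$,
\begin{equation*}
\inf\si(H_{\om,\SS})\geq\min_{k}\inf\si(H_{\om,\SS_{k}})\geq E_{0}+\frac{C}{\big(\frac{L+1}{2}-r\big)^{2}},
\end{equation*}
which is the claim. I expect the main obstacle to be purely organizational: pinning down the chopping of the first paragraph — in particular the ``give-back'' device ensuring each subsegment begins with a genuine mismatched pair — and verifying disjointness and coverage. No new analytic ingredient is needed beyond Theorem \ref{thm-lower-bound-gse-main}, $\rm(H5)$ and $\rm(H6)$; indeed this step can be written to mirror almost verbatim the iteration already performed in the proof of Lemma \ref{lem-estimate-key-1}.
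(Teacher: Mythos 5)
Your proposal is correct and follows essentially the same route as the paper: decompose $\SS$ into subsegments each consisting of a mismatched adjacent pair followed by a homogeneous run (the paper invokes the same iteration as in Lemma \ref{lem-estimate-key-1}, only simpler), then apply the bracketing of Lemma \ref{app-lem-bracketing} together with Theorem \ref{thm-lower-bound-gse-main} (with $m=5$) under $\rm(H5)$, the constant being uniform because only finitely many potential patterns occur. Your explicit ``give-back'' bookkeeping and the separate treatment of pair-only segments directly via $\rm(H5)$ (where $\Om_{M}$ would be empty) are just concrete elaborations of details the paper leaves implicit.
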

\begin{proof}
Let $\om\in\{a,b\}^{\Z^{d}\cap\SS}$ satisfy $\om_{(0,r)}\neq\om_{(0,r+1)}$. We claim that there are subsegments $\SS_{1},\dots,\SS_{K}$ satisfying following conditions:
\begin{itemize}
\item[\rm(i)] for each $k\in\{1,\dots,K\}$, there are $l_{k},m_{k}\in\Z\cap(-\frac{L}{2},\frac{L}{2})$ with $l_{k}+1\leq m_{k}$ such that $\SS_{k}=(-\frac{1}{2},\frac{1}{2})^{d-1}\times(l_{k}-\frac{1}{2},m_{k}+\frac{1}{2})$;

\item[\rm(ii)] $\SS_{1},\dots,\SS_{K}$ are pairwise disjoint and $\ol{\SS}=\cup_{k=1}^{K}\ol{\SS}_{k}$;

\item[\rm(iii)] for each $k\in\{1,\dots,K\}$, $\{\om_{(0,n)},n=l_{k},l_{k}+1,\dots,m_{k}\}$ satisfies the following conditions: $\om_{(0,l_{k})}\neq\om_{(0,l_{k}+1)}$ and $\om_{(0,n)}=a$ for all $n=l_{k}+2,\dots,m_{k}$ or $\om_{(0,n)}=b$ for all $n=l_{k}+2,\dots,m_{k}$.
\end{itemize}
This claim follows from a similar (in fact, much simpler) iteration argument as in the proof of Lemma \ref{lem-estimate-key-1}. The rest proof follows from Lemma \ref{app-lem-bracketing} and Theorem \ref{thm-lower-bound-gse-main} with $\rm(H5)$.
\end{proof}

We also need the counterpart.

\begin{lem}\label{lem-estimate-key-6}
Let $r-1,r\in\Z\cap(-\frac{L}{2},\frac{L}{2})$ and set
$\SS=(-\frac{1}{2},\frac{1}{2})^{d-1}\times(-\frac{L}{2},r+\frac{1}{2})$.
Then there exists some $C>0$ such that for all large $L\in2\N_{0}+1$ there holds
\begin{equation*}
\inf\si(H_{\om,\SS})\geq E_{0}+\frac{C}{\big(\frac{L+1}{2}+r\big)^{2}}
\end{equation*}
for all $\om\in\{a,b\}^{\Z^{d}\cap\SS}$ satisfying $\om_{(0,r)}\neq\om_{(0,r-1)}$.
\end{lem}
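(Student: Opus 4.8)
The plan is to carry over the proof of Lemma \ref{lem-estimate-key-5} almost verbatim, reflecting the roles of ``top'' and ``bottom'' in the $d$-th coordinate: here the segment $\SS$ grows downward from the level $r$ rather than upward, so the iteration that builds the subsegments is the mirror image of the one used there (and, before that, in the proof of Lemma \ref{lem-estimate-key-1}), and the lower-bound input is Theorem \ref{thm-lower-bound-gse-main-1} in place of Theorem \ref{thm-lower-bound-gse-main}. Concretely, fix $\om\in\{a,b\}^{\Z^{d}\cap\SS}$ with $\om_{(0,r)}\neq\om_{(0,r-1)}$. First I would decompose $\SS$ into pairwise disjoint subsegments $\SS_{1},\dots,\SS_{K}$ with $\ol{\SS}=\bigcup_{k=1}^{K}\ol{\SS}_{k}$, each of the form $\SS_{k}=(-\tfrac12,\tfrac12)^{d-1}\times(l_{k}-\tfrac12,m_{k}+\tfrac12)$ with $l_{k}+1\le m_{k}$, such that on $\SS_{k}$ the two topmost cubes $\CC_{(0,m_{k})}$ and $\CC_{(0,m_{k}-1)}$ carry distinct values in $\{a,b\}$ while $\CC_{(0,l_{k})},\dots,\CC_{(0,m_{k}-2)}$ carry a common value (all $a$ or all $b$). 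Starting from the distinct-valued pair $\{r,r-1\}$, this is produced by walking down in the index $n$, keeping a maximal constant run, and opening a fresh subsegment---with its own distinct-valued pair at the top---each time the value flips; since only the two values $a$ and $b$ occur, this iteration is strictly simpler than, and is the reflection of, the one spelled out in the proof of Lemma \ref{lem-estimate-key-1}.

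Next I would apply the bracketing of Lemma \ref{app-lem-bracketing} (all Mezincescu boundary conditions being defined via $\vp_{a}$, as fixed throughout this section) to get $H_{\om,\SS}\ge\bigoplus_{k=1}^{K}H_{\om,\SS_{k}}$, hence $\inf\si(H_{\om,\SS})\ge\min_{1\le k\le K}\inf\si(H_{\om,\SS_{k}})$. For each $k$, after a translation the cube $\SS_{k}$ together with the potential it carries fits the hypotheses of Theorem \ref{thm-lower-bound-gse-main-1} with $m=5$: the two-cube block $\Om_{0}$ (the distinct-valued pair, sitting above the run) carries a potential $t_{1}u(\cdot-i_{1})+t_{2}u(\cdot-i_{2})$ with $t_{1},t_{2}\in\{a,b\}$ and $t_{1}\neq t_{2}$, whose restricted operator has spectral bottom strictly above $E_{0}$ by $\rm(H5)$ (which holds for every translate of $\SS_{0}$), while $\Om_{M}$ (the run) carries $a\sum_{i}u(\cdot-i)$ or $b\sum_{i}u(\cdot-i)$---the two cases treated in Subsections \ref{subsec-inherited-gse} and \ref{subsec-non-inherited-gse}, the latter invoking Lemma \ref{lem-key} and $\rm(H6)$. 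Theorem \ref{thm-lower-bound-gse-main-1} then furnishes a constant $C>0$ depending only on $m=5$ and on the finitely many admissible pairs $(t_{1},t_{2})$---hence independent of $k$, of $\SS_{k}$, and of $L$---with $\inf\si(H_{\om,\SS_{k}})\ge E_{0}+C(m_{k}-l_{k}+1)^{-2}$ (subsegments of the minimal length $2$, i.e.\ consisting of the two-cube block alone, are handled directly by $\rm(H5)$, which gives a fixed spectral gap above $E_{0}$, a fortiori the claimed bound). Since every $\SS_{k}\subset\SS$ has $d$-th side length $m_{k}-l_{k}+1\le\frac{L+1}{2}+r$ (the $d$-th side length of $\SS$ itself), taking the minimum over $k$ yields the asserted estimate for all large $L$.

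I do not expect a serious obstacle: everything analytic has already been packaged into Theorem \ref{thm-lower-bound-gse-main-1} together with $\rm(H5)$ and $\rm(H6)$, and the only bookkeeping is the combinatorial verification that the downward iteration terminates with subsegments all of the stated form---the verbatim mirror of a construction already carried out for Lemma \ref{lem-estimate-key-1} and reused for Lemma \ref{lem-estimate-key-5}, so it presents nothing new. The single point that deserves a word is orientation: one must use the counterpart Theorem \ref{thm-lower-bound-gse-main-1}, whose block $\Om_{0}$ lies above $\Om_{M}$ in the $d$-th coordinate, matching the fact that the distinct-valued seed sits at the top of each $\SS_{k}$ while the constant run extends toward $-\tfrac{L}{2}$.
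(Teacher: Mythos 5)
Your proposal is correct and follows exactly the route the paper intends for this counterpart lemma: the mirror of the proof of Lemma \ref{lem-estimate-key-5}, i.e.\ a decomposition of $\SS$ into subsegments each consisting of a distinct-valued pair (at the top) followed by a constant run (below), then bracketing via Lemma \ref{app-lem-bracketing} and the lower bound of Theorem \ref{thm-lower-bound-gse-main-1} with $m=5$ and $\rm(H5)$ (together with $\rm(H6)$/Lemma \ref{lem-key} for the $b$-run case), with the uniform constant and the trivial handling of length-two blocks as you note. The only slight imprecision is the phrase ``maximal constant run'' in the iteration --- the run of each block must stop one cube short of the flip so that the flip pair itself forms the distinct-valued head of the next block --- but your own description of each flip seeding the new top pair already says this, and it is the same level of detail as the paper's sketch.
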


We now prove Lemma \ref{lem-estimate-key-7}.

\begin{proof}[Proof of Lemma \ref{lem-estimate-key-7}]
For large $L\in2\N_{0}+1$, let $\SS_{0L}=(-\frac{1}{2},\frac{1}{2})^{d-1}\times(-\frac{L}{2},\frac{L}{2})$ and set $\SS_{qL}=(q,0)+\SS_{0L}$ for $q\in\Z^{d-1}$. Lemma \ref{app-lem-bracketing} then implies
\begin{equation*}
E_{L}(\om)\geq\min_{q\in\Z^{d-1}\cap(-\frac{L}{2},\frac{L}{2})^{d-1}}\inf\si(H_{\om,\SS_{qL}}).
\end{equation*}

Now, let $\om\in\{a,b\}^{\Z^{d}\cap\La_{L}}$ be as in the statement of Lemma \ref{lem-estimate-key-7}. Fix any $q\in\Z^{d-1}\cap(-\frac{L}{2},\frac{L}{2})^{d-1}$ and consider the operator $H_{\om,\SS_{qL}}$. Let $r_{1},r_{2},r_{3},r_{4}\in\Z\cap(-\frac{L}{2},\frac{L}{2})$ be consecutive integers satisfying $r_{1}<r_{2}<r_{3}<r_{4}$, $\om_{(q,r_{1})}\neq\om_{(q,r_{2})}$ and $\om_{(q,r_{3})}\neq\om_{(q,r_{4})}$. Chopping $\SS_{qL}$ into
\begin{equation*}
\SS_{qL1}=\bigg(-\frac{1}{2},\frac{1}{2}\bigg)^{d-1}\times\bigg(-\frac{L}{2},r_{2}+\frac{1}{2}\bigg),\quad\SS_{qL2}=\bigg(-\frac{1}{2},\frac{1}{2}\bigg)^{d-1}\times\bigg(r_{3}-\frac{1}{2},\frac{L}{2}\bigg)
\end{equation*}
and using Lemma \ref{app-lem-bracketing}, we find
\begin{equation*}
\inf\si(H_{\om,\SS_{qL}})\geq\min\big\{\inf\si(H_{\om,\SS_{qL1}}),\inf\si(H_{\om,\SS_{qL2}})\big\}.
\end{equation*}
Then, we can apply Lemma \ref{lem-estimate-key-5} to $H_{\om,\SS_{qL2}}$ and Lemma \ref{lem-estimate-key-6} to $H_{\om,\SS_{qL1}}$ to conclude the result.
\end{proof}

\section{Further Discussions}\label{sec-dis}

We give a proof of the lower bound of Lifshitz tails in the cases $E_{\vp_{a}}(a)\leq E_{\vp_{a}}(b)$ and $E_{\vp_{b}}(a)\geq E_{\vp_{b}}(b)$ in Subsection \ref{subsec-lower-bound} and use Klopp and Nakamura's Bernoulli model constructed in \cite{KN09} to explain that Lifshitz tails may fail if $\rm(H5)$ fails in Subsection \ref{subsec-example}.

\subsection{Lifshitz Tails: Lower Bound}\label{subsec-lower-bound}

As mentioned in Section \ref{sec-intro}, the lower bound for Lifshitz tails has been proven in \cite[Theorem 0.2]{KN09}, whose proof is based on some techniques set up in \cite{Kl02} and \cite{KW02}. We here give a simple proof if $E_{\vp_{a}}(a)\leq E_{\vp_{a}}(b)$ or $E_{\vp_{b}}(a)\geq E_{\vp_{b}}(b)$ is true. The result is given by

\begin{thm}\label{thm-lower-bound}
Suppose $\rm (H1)$, $\rm(H2)$ and $\rm(H3)$. If either
\begin{itemize}
\item[\rm(i)] $E_{\vp_{a}}(a)\leq E_{\vp_{a}}(b)$ and $\P_{0}\big\{[a,a+\ep)\big\}\geq C\ep^{\ka}$ for $C>0$, $\ka>0$ and all $\ep>0$ small, or
\end{itemize}
\begin{itemize}
\item[\rm(ii)] $E_{\vp_{b}}(a)\geq E_{\vp_{b}}(b)$ and $\P_{0}\big\{(b-\ep,b]\big\}\geq C\ep^{\ka}$ for $C>0$, $\ka>0$ and all $\ep>0$ small,
\end{itemize}
is satisfied, then
\begin{equation*}
\liminf_{E\da E_{0}}\frac{\ln|\ln N(E)|}{\ln(E-E_{0})}\geq-\frac{d}{2}.
\end{equation*}
\end{thm}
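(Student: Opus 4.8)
The plan is to use the classical Dirichlet--box lower bound for the IDS. By symmetry we treat case $\rm(i)$; then Theorem \ref{thm-spectral-minimum}$\rm(i)$ and Lemma \ref{lem-Mezin} give $E_{0}=E_{\vp_{a}}(a)=\inf\si(H_{a})$, so the $\Z^{d}$-periodic ground state $\vp_{a}$ of $H_{a}$ satisfies $H_{a}\vp_{a}=E_{0}\vp_{a}$ and (being continuous, periodic and strictly positive) is bounded above and bounded below by positive constants. Fix $\ep>0$ small and work on the box $\La_{L}$ with $L\in2\N_{0}+1$ large.

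First I would build a good trial function. Let $\eta_{L}$ be the Dirichlet ground state of $-\De$ on $\La_{L}$ (a product of cosines), so $-\De\eta_{L}=d\pi^{2}L^{-2}\eta_{L}$, $0<\eta_{L}\leq1$, $\eta_{L}|_{\pa\La_{L}}=0$, and set $\phi_{L}=\vp_{a}\eta_{L}\in H^{1}_{0}(\La_{L})$. The ground state transform identity $\lan f\vp_{a},H_{a}(f\vp_{a})\ran=E_{0}\|f\vp_{a}\|^{2}+\int\vp_{a}^{2}|\nabla f|^{2}$ applied with $f=\eta_{L}$, together with $\inf\vp_{a}>0$ and $\sup\vp_{a}<\infty$, yields $\lan\phi_{L},H_{a}\phi_{L}\ran\leq(E_{0}+C_{1}L^{-2})\|\phi_{L}\|^{2}$. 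On the event $\Om_{L}=\{\om:\om_{i}\in[a,a+\ep)\fora i\in\Z^{d}\cap\La_{L}\}$ we have $H^{D}_{\om,\La_{L}}=H^{D}_{a,\La_{L}}+\sum_{i}(\om_{i}-a)u(\cdot-i)$ with $0\leq\om_{i}-a<\ep$; since $u$ is supported in $\CC_{0}$ and lies in $L^{1}(\CC_{0})$ (as $u\in L^{p}$ with $p>d$), the uniform bound on $\phi_{L}$ gives $|\sum_{i}(\om_{i}-a)\int u(\cdot-i)|\phi_{L}|^{2}|\leq C_{2}\ep\|\phi_{L}\|^{2}$. By the min--max principle, $E_{0}(H^{D}_{\om,\La_{L}})\leq E_{0}+C_{1}L^{-2}+C_{2}\ep$ for every $\om\in\Om_{L}$.

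Next I balance the two scales by taking $\ep=L^{-2}$, so $E_{0}(H^{D}_{\om,\La_{L}})\leq E_{0}+C_{3}L^{-2}$ on $\Om_{L}$, while by independence and hypothesis $\rm(i)$, $\P(\Om_{L})=\P_{0}\{[a,a+L^{-2})\}^{\#(\Z^{d}\cap\La_{L})}\geq(CL^{-2\ka})^{L^{d}}\geq\exp(-cL^{d}\ln L)$ for $L$ large. On $\Om_{L}$ one has $N(H^{D}_{\om,\La_{L}},E_{0}+C_{3}L^{-2})\geq1$, hence $\E\{N(H^{D}_{\cdot,\La_{L}},E_{0}+C_{3}L^{-2})\}\geq\P(\Om_{L})$, and \eqref{IDS-expectation-X} (valid under $\rm(H1)$--$\rm(H3)$) gives $N(E_{0}+C_{3}L^{-2})\geq L^{-d}\P(\Om_{L})\geq\exp(-c'L^{d}\ln L)$. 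Setting $E=E_{0}+C_{3}L^{-2}$, i.e. $L\asymp(E-E_{0})^{-1/2}$, and using the monotonicity of $N$ to interpolate between consecutive odd values of $L$, we obtain $N(E)\geq\exp(-C(E-E_{0})^{-d/2}|\ln(E-E_{0})|)$ as $E\da E_{0}$. Taking logarithms twice and dividing by $\ln(E-E_{0})<0$ gives $\liminf_{E\da E_{0}}\frac{\ln|\ln N(E)|}{\ln(E-E_{0})}\geq-\frac{d}{2}$. Case $\rm(ii)$ is identical with $\vp_{b}$, $H_{b}=H_{b,\CC_{0}}$-ground state, and the perturbation $-\sum_{i}(b-\om_{i})u(\cdot-i)$ with $\om_{i}\in(b-\ep,b]$.

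The points needing care --- hence the main obstacle --- are: $\rm(a)$ justifying the ground state transform identity and the membership $\phi_{L}\in H^{1}_{0}(\La_{L})$ while $u$ is only $L^{p}$, which uses that $H_{a}$ is a form sum and $\vp_{a}\in C^{1}$ by the regularity recalled in the introduction; and $\rm(b)$ controlling the random perturbation uniformly over $\om\in\Om_{L}$ via the $L^{1}(\CC_{0})$-norm of $u$ and the uniform bound on $\phi_{L}$. Everything else is the standard scale-matching bookkeeping, which is why this argument is much shorter than the one in \cite{KN09}.
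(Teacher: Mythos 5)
Your proposal is correct, but it takes a genuinely different route from the paper. The paper first establishes a comparison with a \emph{monotone} model: writing $u=u_{+}-u_{-}$ and using $\om_{i}\geq a$, it bounds $H_{\om}\leq\HH_{a,\om}:=-\De+\VV_{a}+\sum_{i}\om_{i}u_{+}(\cdot-i)$ with $\VV_{a}=V_{0}-a\sum_{i}u_{-}(\cdot-i)$, checks via Theorem \ref{thm-spectral-minimum} and Lemma \ref{lem-Mezin} that $\inf\si(\HH_{a,\om})=\inf\si(H_{a})=E_{0}$, deduces $N\geq\NN_{a}$ from \eqref{IDS-expectation-X}, and then invokes the standard Lifshitz-tail lower bound for sign-definite alloy models to bound $\NN_{a}$ from below. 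You instead inline the standard argument and apply it directly to the sign-indefinite model: on the event $\{\om_{i}\in[a,a+\ep)\ \text{for all}\ i\in\Z^{d}\cap\La_{L}\}$ the perturbation $\sum_{i}(\om_{i}-a)u(\cdot-i)$ is controlled in absolute value against the uniformly bounded trial function $\vp_{a}\eta_{L}$ through $\|u\|_{L^{1}(\CC_{0})}$, so no sign information on $u$ is needed, and the ground state transform plus the choice $\ep=L^{-2}$ gives $E_{0}(H^{D}_{\om,\La_{L}})\leq E_{0}+C_{3}L^{-2}$ with probability at least $\exp(-cL^{d}\ln L)$. Both arguments rest on the same input, namely $E_{0}=E_{\vp_{a}}(a)=\inf\si(H_{a})$, and both yield the same doubly-logarithmic asymptotics; the quantitative bounds $N(E)\geq\exp(-C(E-E_{0})^{-d/2}|\ln(E-E_{0})|)$ versus $\NN_{a}(E)\geq C_{1}(E-E_{0})^{d/2}(C_{2}(E-E_{0})^{\ka})^{C_{3}(E-E_{0})^{-d/2}}$ are of the same strength. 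The paper's route is shorter on the page because it delegates the box argument to the cited literature and reuses the comparison operator $\HH_{a,\om}$ elsewhere conceptually; yours is self-contained and makes explicit that monotonicity is never actually needed for the lower bound, only smallness of the coupling deviations. The two points you flag as needing care (validity of the ground state transform for $f\in H^{1}_{0}(\La_{L})\cap W^{1,\infty}$ with $\vp_{a}\in C^{1}$ a weak solution, and the $L^{1}$--$L^{\infty}$ control of the random form perturbation) are indeed the only places requiring justification, and both go through under (H1)--(H3).
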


We will only prove Theorem \ref{thm-lower-bound} in the case $E_{\vp_{a}}(a)\leq E_{\vp_{a}}(b)$. Therefore, all the Mezincescu boundary conditions below are defined via $\vp_{a}$. Our Method is based on the following observation.

\begin{lem}\label{lem-lower-bound-0}
Suppose $\rm (H1)$, $\rm(H2)$ and $\rm(H3)$. Let
\begin{equation*}
\HH_{a,\om}=-\De+\VV_{a}+\sum_{i\in\Z^{d}}\om_{i}u_{+}(\cdot-i),
\end{equation*}
where $\VV_{a}=V_{0}-a\sum_{i\in\Z^{d}}u_{-}(\cdot-i)$. If $E_{\vp_{a}}(a)\leq E_{\vp_{a}}(b)$, then $H_{\om}\leq\HH_{a,\om}$ with $E_{0}=\inf\si(\HH_{a,\om})$.
Moreover, $H_{\om,\La_{L}}^{X}\leq\HH_{a,\om,\La_{L}}^{X}$ for all $L\in\N$ and $X=D$ or $X=N$.
\end{lem}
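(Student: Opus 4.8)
The plan is to read off the two operator inequalities from the fact that $\HH_{a,\om}-H_\om$ is a nonnegative multiplication operator, and then to pin down $\inf\si(\HH_{a,\om})$ by combining Theorem~\ref{thm-spectral-minimum}$\rm(i)$ with the Mezincescu machinery of Lemma~\ref{lem-Mezin}.

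First I would compute, for any $\om=\{\om_i\}_{i\in\Z^d}\in\Om$, that
\begin{equation*}
\HH_{a,\om}-H_\om=\big(\VV_a-V_0\big)+\sum_{i\in\Z^d}\om_i\big(u_+(\cdot-i)-u(\cdot-i)\big)=\sum_{i\in\Z^d}(\om_i-a)u_-(\cdot-i),
\end{equation*}
using $u=u_+-u_-$ and $\VV_a=V_0-a\sum_i u_-(\cdot-i)$. Since $a=\inf\supp(\P_0)$ forces $\om_i\ge a$ for every $i$, and $u_-\ge0$, the right-hand side is a nonnegative function. Hence $H_\om\le\HH_{a,\om}$ in the sense of quadratic forms on $H^1(\R^d)$, and restricting the (common) quadratic form to $H^1_0(\La_L)$ when $X=D$ and to $H^1(\La_L)$ when $X=N$ gives $H^X_{\om,\La_L}\le\HH^X_{a,\om,\La_L}$ for every $L\in\N$. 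I would also remark here that $\HH_{a,\om}$ is itself of alloy type satisfying $\rm(H1)$–$\rm(H3)$, with background potential the $\Z^d$-periodic function $\VV_a\in L^p_{\rm loc}(\R^d)$, single-site potential $u_+\ge0$ in $L^p(\R^d)$ supported in $\CC_0$, and the same coupling constants $\{\om_i\}$; in particular $\HH_{a,\om}$ is a.s. self-adjoint and $\Z^d$-ergodic, so $\inf\si(\HH_{a,\om})$ is a.s. a deterministic number.

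It remains to show that number equals $E_0$. From $H_\om\le\HH_{a,\om}$ and the variational principle, $E_0=\inf\si(H_\om)\le\inf\si(\HH_{a,\om})$ a.s. For the opposite inequality, observe that the constant configuration $\om\equiv a$ (admissible since $a\in\supp(\P_0)$) yields $\HH_{a,\om}\big|_{\om\equiv a}=-\De+\VV_a+a\sum_i u_+(\cdot-i)=-\De+V_0+a\sum_i u(\cdot-i)=H_a$; by periodic approximation applied to the alloy family $\HH_{a,\cdot}$ we get $\si(H_a)\subset\si(\HH_{a,\om})$ a.s., hence $\inf\si(\HH_{a,\om})\le\inf\si(H_a)$. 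Since $\vp_a$ is the continuously differentiable, strictly positive, $\Z^d$-periodic ground state of $H_a$, Lemma~\ref{lem-Mezin}$\rm(i)$ gives $\inf\si(H_a)=E_{\vp_a}(a)$, and the hypothesis $E_{\vp_a}(a)\le E_{\vp_a}(b)$ together with Theorem~\ref{thm-spectral-minimum}$\rm(i)$ gives $E_{\vp_a}(a)=E_0$. Combining yields $\inf\si(\HH_{a,\om})=E_0$ a.s., as claimed. Every step above is either an elementary identity or a direct invocation of a result already established, so I do not anticipate a genuine obstacle; the only points deserving a sentence of care are that $\HH_{a,\om}$ truly fits the alloy framework (so ergodicity and periodic approximation apply) and that $\om\equiv a$ is a legitimate element of $\Om=(\supp\P_0)^{\Z^d}$, both of which are immediate.
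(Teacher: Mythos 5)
Your proposal is correct and follows essentially the same route as the paper: the identity $\HH_{a,\om}-H_{\om}=\sum_{i\in\Z^{d}}(\om_{i}-a)u_{-}(\cdot-i)\geq0$ gives the global and localized form inequalities, and $\inf\si(\HH_{a,\om})$ is identified with $\inf\si(H_{a})=E_{\vp_{a}}(a)=E_{0}$ using Lemma \ref{lem-Mezin} and Theorem \ref{thm-spectral-minimum}$\rm(i)$. The only cosmetic difference is that you obtain the lower bound $E_{0}\leq\inf\si(\HH_{a,\om})$ from $H_{\om}\leq\HH_{a,\om}$ and the upper bound by periodic approximation with $\om\equiv a$, whereas the paper simply invokes the standard fact that the sign-definite alloy model $\HH_{a,\om}$ (with $u_{+}\geq0$) has spectral bottom $\inf\si(H_{a})$; these amount to the same monotonicity-plus-approximation argument.
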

\begin{proof}
Note
\begin{equation*}
\begin{split}
H_{\om}&=-\De+V_{0}+\sum_{i\in\Z^{d}}\om_{i}u_{+}(\cdot-i)-\sum_{i\in\Z^{d}}\om_{i}u_{-}(\cdot-i)\\
&\leq -\De+V_{0}+\sum_{i\in\Z^{d}}\om_{i}u_{+}(\cdot-i)-a\sum_{i\in\Z^{d}}u_{-}(\cdot-i).
\end{split}
\end{equation*}
This shows $H_{\om}\leq\HH_{a,\om}$. The inequality $H_{\om,\La_{L}}^{X}\leq\HH_{a,\om,\La_{L}}^{X}$ in the sense of quadratic forms follows directly from the structure of the potentials.

To show $E_{0}=\inf\si(\HH_{a,\om})$, we note that
\begin{equation*}
\inf\si(\HH_{a,\om})=\inf\si\Big(-\De+\VV_{a}+a\sum_{i\in\Z^{d}}u_{+}(\cdot-i)\Big)=\inf\si(H_{a}),
\end{equation*}
therefore, we only need to show $E_{0}=\inf\si(H_{a})$. But, by Lemma \ref{lem-Mezin} and Theorem \ref{thm-spectral-minimum}, we have $\inf\si(H_{a})=\inf\si(H_{a,\CC_{0}})=E_{\vp_{a}}(a)=E_{0}$. This completes the proof.
\end{proof}

As a simple consequence of the above lemma and \eqref{IDS-expectation-X}, we have

\begin{lem}\label{lem-lower-bound-1}
Suppose $\rm (H1)$, $\rm(H2)$ and $\rm(H3)$. If $E_{\vp_{a}}(a)\leq E_{\vp_{a}}(b)$, then $N(E)\geq\NN_{a}(E)$ for $E\in\R$, where $\NN_{a}$ is the IDS of $\HH_{a,\om}$.
\end{lem}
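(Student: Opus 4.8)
The plan is to deduce the stated bound directly from Lemma \ref{lem-lower-bound-0} together with the finite-volume representation \eqref{IDS-expectation-X} of the integrated density of states, using the monotonicity of Dirichlet eigenvalue counting functions under the ordering of quadratic forms. The point is that Lemma \ref{lem-lower-bound-0} compares $H_{\om}$ with the genuinely monotone (sign-definite) alloy-type operator $\HH_{a,\om}$, for which the bottom of the spectrum is exactly $E_{0}$, and this comparison survives at finite volume with the usual boundary conditions.

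First I would record that $\HH_{a,\om}$ itself falls under the standing hypotheses: its periodic background $\VV_{a}=V_{0}-a\sum_{i\in\Z^{d}}u_{-}(\cdot-i)$ lies in $L^{p}_{\rm loc}(\R^{d})$ and is $\Z^{d}$-periodic, and its single-site potential $u_{+}\in L^{p}(\R^{d})$ is supported in $\CC_{0}$ and non-trivial by $\rm(H2)$. Hence the results cited for the existence of the IDS apply to $\HH_{a,\om}$ as well, so that $\NN_{a}$ exists, is a.e. deterministic, and admits the representation
\begin{equation*}
\NN_{a}(E)=\sup_{L\in\N}\frac{\E\{N(\HH_{a,\cdot,\La_{L}}^{D},E)\}}{L^{d}},\quad E\in\R,
\end{equation*}
analogous to \eqref{IDS-expectation-X}.

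Next, for each $L\in\N$, Lemma \ref{lem-lower-bound-0} gives $H_{\om,\La_{L}}^{D}\le\HH_{a,\om,\La_{L}}^{D}$ in the sense of quadratic forms (on the common form domain $H_{0}^{1}(\La_{L})$). The min-max principle then yields $E_{n}(H_{\om,\La_{L}}^{D})\le E_{n}(\HH_{a,\om,\La_{L}}^{D})$ for every $n\in\N_{0}$, whence
\begin{equation*}
N(H_{\om,\La_{L}}^{D},E)\ge N(\HH_{a,\om,\La_{L}}^{D},E),\quad E\in\R,\ \om\in\Om.
\end{equation*}
Taking expectations, dividing by $L^{d}$, and passing to the supremum over $L\in\N$, the two representations of the IDS give $N(E)\ge\NN_{a}(E)$ for all $E\in\R$, as claimed. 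One could equally use the Neumann version, with the infimum over $L$ in place of the supremum, since the form inequality also holds for the Neumann restrictions.

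There is essentially no serious obstacle here; the only point requiring a word of care is that one must work with Dirichlet (or Neumann) restrictions rather than Mezincescu restrictions, so that the comparison $H_{\om,\La_{L}}^{X}\le\HH_{a,\om,\La_{L}}^{X}$ — which is immediate from the pointwise potential inequality — is available together with the representation \eqref{IDS-expectation-X}, and so that the lower-boundedness assumption $\rm(H4)$ plays no role in this particular lemma.
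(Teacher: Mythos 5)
Your proposal is correct and follows the paper's argument exactly: the paper derives this lemma as ``a simple consequence'' of Lemma \ref{lem-lower-bound-0} combined with the representation \eqref{IDS-expectation-X}, which is precisely your route via the finite-volume form inequality $H_{\om,\La_{L}}^{X}\le\HH_{a,\om,\La_{L}}^{X}$, min-max, and the supremum over $L$. Nothing to add.
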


We remark that $\NN_{a}$ is well-defined since $\HH_{a,\om}$ is a standard continuum Anderson model (see e.g. \cite{KM07,Ve08}). Using the above lemma and the fact that $\inf\si(H_{\om})=\inf\si(\HH_{a,\om})$ by Lemma \ref{lem-lower-bound-0}, to prove the lower bound, it suffices to estimate a lower bound for $\NN_{a}$. 

\begin{proof}[Proof of Theorem \ref{thm-lower-bound}]
Note that the random operators $\HH_{a,\om}$ is a standard continuum Anderson models, and bottoms of their spectrum are nothing but $E_{0}$. Therefore, standard arguments (see e.g. \cite{KM07,KW05}) ensure that if $\P_{0}\big\{[a,a+\ep)\big\}\geq C\ep^{\ka}$ for some $C>0$, $\ka>0$ and all $\ep>0$ small, then there are constants $C_{1}>0$, $C_{2}>0$ and $C_{3}>0$ such that
\begin{equation*}
\NN_{a}(E)\geq C_{1}(E-E_{0})^{d/2}(C_{2}(E-E_{0})^{\ka})^{C_{3}(E-E_{0})^{-d/2}}
\end{equation*}
for all $E>E_{0}$ with $E-E_{0}$ small. Lemma \ref{lem-lower-bound-1} then leads to the result.
\end{proof}

\subsection{Klopp and Nakamura's Bernoulli Model}\label{subsec-example}

In Theorem \ref{thm-Lifshitz-tail-non-critical}, we require assumption $\rm(H5)$. Here, we employ Klopp and Nakamura's Bernoulli Model constructed in \cite{KN09} to argue that Lifshitz tails may fail if $\rm(H5)$ fails.

Let $\psi\in C^{2}(\CC_{0})$ be strictly positive, reflection symmetric and constant near $\pa\CC_{0}$. Denote this positive constant by $c_{0}$. Set $u=\frac{\De\psi}{\psi}$ and consider the random operator
\begin{equation}\label{Bernoulli-model}
H_{\om}=-\De+\sum_{i\in\Z^{d}}\om_{i}u(\cdot-i),
\end{equation}
where $\{\om_{i}\}_{i\in\Z^{d}}$ are i.i.d Bernoulli random variables with support $\{0,1\}$. Then, $a=0$ and $b=1$. It was proven in \cite{KN09} that this model fails to exhibit Lifshitz tails, but exhibits a van-Hove singularity. We show that it satisfies
\begin{equation*}
E_{\vp_{0}}(0)=E_{\vp_{0}}(1)\quad\text{and}\quad E_{\vp_{1}}(0)=E_{\vp_{1}}(1)
\end{equation*}
and fails to satisfy $\rm(H5)$.

For $E_{\vp_{0}}(0)=E_{\vp_{0}}(1)$, we note $H_{0}=-\De$,  so we can take $\vp_{0}\equiv1$, which yields the coincidence of Mezincescu boundary condition defined via $\vp_{0}$ and Neumann boundary condition. Since $\psi$ is constant near $\pa\CC_{0}$, it satisfies Neumann boundary condition on $\pa\CC_{0}$, so $\psi\in\DD(-\De_{\CC_{0}}^{N}+u)$. We conclude from $(-\De_{\CC_{0}}^{N}+u)\psi=0$ and the strict positivity of $\psi$ that $E_{\vp_{0}}(1)=\inf\si(-\De_{\CC_{0}}^{N}+u)=0$. Hence, $E_{\vp_{0}}(0)=0=E_{\vp_{0}}(1)$.

For $E_{\vp_{1}}(0)=E_{\vp_{1}}(1)$, since $\psi$ is constant near $\CC_{0}$, we conclude that $\psi$ is not only the ground state of $-\De_{\CC_{0}}^{N}+u$, but also the ground state of $-\De_{\CC_{0}}^{P}+u$, where the capital $P$ stands for periodic boundary condition. Thus, by periodic extension, the function $\vp_{1}=\sum_{i\in\Z^{d}}\psi(\cdot-i)$ is a continuously differentiable, strictly positive and $\Z^{d}$-periodic ground state of $H_{1}=-\De+\sum_{i\in\Z^{d}}u(\cdot-i)$ with $H_{1}\vp_{1}=0$. It follows that $E_{\vp_{1}}(1)=0$. Since $\psi$ is constant near $\pa\CC_{0}$, the Mezincescu boundary condition defined via $\vp_{1}$ and Neumann boundary condition coincide, which implies that $E_{\vp_{1}}(0)=\inf\si(-\De_{\CC_{0}}^{N})=0$. Therefore, $E_{\vp_{1}}(0)=0=E_{\vp_{1}}(1)$.

We next show that $H_{\om}$ fails to satisfy $\rm(H5)$. Since $E_{\vp_{0}}(0)=E_{\vp_{0}}(1)$ and $E_{\vp_{1}}(0)=E_{\vp_{1}}(1)$, and Mezincescu boundary conditions defined via $\vp_{0}$ and $\vp_{1}$ coincide with Neumann boundary condition as discussed above, the failure of $\rm(H5)$ follows from the explicit ground state of localized operators. More precise, let $\SS\subset\R^{d}$ be any nonempty open set such that $\SS=\text{int}(\cup_{i\in\Z^{d}\cap\SS}\overline{C}_{i})$, then $\inf\si(H_{\om,\SS})=0$ with ground state $\vp_{\om,\SS}$ satisfying
\begin{equation*}
\begin{split}
\vp_{\om,\SS}|_{\CC_{i}}=\left\{ \begin{aligned}
\vp(\cdot-i),&\quad\text{if}\quad\om_{i}=1,\\
c_{0},&\quad\text{if}\quad\om_{i}=0
\end{aligned}\right.
\end{split}
\end{equation*}
for any $i\in\Z^{d}\cap\SS$.

We point out that the reflection symmetry assumption on $\psi$, made in \cite{KN09}, is only for the reflection symmetry of $u$, so $E(0)=E(1)$, where $E(0)$ and $E(1)$ are the ground state energies of $-\De_{\CC_{0}}^{N}$ and $-\De_{\CC_{0}}^{N}+u$, respectively. The proof of $E_{\vp_{0}}(0)=E_{\vp_{0}}(1)$ and $E_{\vp_{1}}(0)=E_{\vp_{1}}(1)$ above is clearly independent of the reflection symmetry of $\psi$. Moreover, the proof in \cite{KN09} of the van-Hove singularity of the IDS of \eqref{Bernoulli-model} is mainly based on the explicit ground states (in terms of $\psi$) of $H_{\om}$ restricted to cuboids. But without the reflection symmetry of $u$, we can still use these explicit ground states. Therefore, after dropping the reflection symmetry assumption on $\psi$, the arguments in \cite{KN09} still apply and the IDS of \eqref{Bernoulli-model} exhibits the van-Hove singularity.

The above analysis is summarized as

\begin{thm}
Let $\psi\in C^{2}(\CC_{0})$ be strictly positive and constant near $\pa\CC_{0}$. Set $u=\frac{\De\psi}{\psi}$. Let $\vp_{0}$ and $\vp_{1}$ be the continuously differentiable, strictly positive and $\Z^{d}$-periodic ground states of $H_{0}=-\De$ and $H_{1}=-\De+\sum_{i\in\Z^{d}}u(\cdot-i)$, respectively. Considering the Bernoulli model \eqref{Bernoulli-model}, we have
\begin{itemize}
\item[$\bullet$] $E_{\vp_{0}}(0)=0=E_{\vp_{0}}(1)$ and $E_{\vp_{1}}(0)=0=E_{\vp_{1}}(1)$;
\item[$\bullet$] the IDS of $H_{\om}$ exhibits the van-Hove singularity near $0$.
\end{itemize}
\end{thm}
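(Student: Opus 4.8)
The plan is to prove the two bullets separately. The four identities for $E_{\vp_0}(0),E_{\vp_0}(1),E_{\vp_1}(0),E_{\vp_1}(1)$ follow by making the periodic ground states $\vp_0,\vp_1$ and the Mezincescu boundary conditions they generate completely explicit; the crucial structural feature is that $\psi$ is constant near $\pa\CC_0$, so each of these Mezincescu conditions degenerates to the Neumann condition, while at the same time $\psi$ is an explicit zero-energy positive solution of $(-\De+u)\psi=0$ that respects that Neumann condition. The van-Hove statement follows — exactly as in \cite{KN09}, and with no use of reflection symmetry — from the fact that on every box $\La_L$ and for every Bernoulli configuration the operator $H_{\om,\La_L}$ carries a fully explicit strictly positive ground state at energy $0$, so a ground-state transform turns it into a weighted Laplacian whose weight is bounded above and below by $\om$- and $L$-independent positive constants.

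For the identities: since $H_0=-\De$ we take $\vp_0\equiv 1$, so $\nabla\vp_0\equiv 0$ on $\pa\CC_0$ and the Mezincescu condition via $\vp_0$ is Neumann, whence $E_{\vp_0}(0)=\inf\si(-\De_{\CC_0}^N)=0$. For $E_{\vp_0}(1)$ the boundary condition is still Neumann; since $\psi$ is constant near $\pa\CC_0$ it satisfies the Neumann condition there, and $(-\De+u)\psi=-\De\psi+\frac{\De\psi}{\psi}\psi=0$, so the strictly positive $\psi$ is the ground state of $-\De_{\CC_0}^N+u$ and $E_{\vp_0}(1)=0$. Next, the $\Z^d$-periodic extension $\vp_1:=\sum_{i\in\Z^d}\psi(\cdot-i)$ is, because $\psi\equiv c_0$ near $\pa\CC_0$, locally constant across the cube faces, hence $C^1$ (indeed smooth there), strictly positive, $\Z^d$-periodic, and solves $H_1\vp_1=0$; therefore $\inf\si(H_1)=0$ with ground state $\vp_1$, and Lemma \ref{lem-Mezin} gives $E_{\vp_1}(1)=\inf\si(H_1)=0$. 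Finally $\vp_1$ is itself constant near $\pa\CC_0$, so the Mezincescu condition via $\vp_1$ is again Neumann and $E_{\vp_1}(0)=\inf\si(-\De_{\CC_0}^N)=0$.

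For the van-Hove asymptotics I would argue as follows. Given $\La_L$ and $\om\in\{0,1\}^{\Z^d\cap\La_L}$, define $\vp_{\om,\La_L}$ to be $\psi(\cdot-i)$ on $\CC_i$ when $\om_i=1$ and the constant $c_0$ on $\CC_i$ when $\om_i=0$; since $\psi\equiv c_0$ near $\pa\CC_0$ this function is $C^2$ on $\La_L$, strictly positive, satisfies the Neumann condition on $\pa\La_L$, and solves $H_{\om,\La_L}\vp_{\om,\La_L}=0$, so positivity of the eigenfunction forces $E_0(H_{\om,\La_L}^N)=0$ and $H_{\om,\La_L}^N\ge 0$. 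The ground-state representation then gives, for $f\in H^1(\La_L)$ with $g=f/\vp_{\om,\La_L}$,
\[
\lan f,H_{\om,\La_L}^N f\ran=\int_{\La_L}\vp_{\om,\La_L}^2\,|\nabla g|^2,\qquad \|f\|^2=\int_{\La_L}\vp_{\om,\La_L}^2\,|g|^2 .
\]
Since $0<c_*\le\vp_{\om,\La_L}\le c^*<\infty$ with $c_*=\min\{c_0,\inf_{\CC_0}\psi\}$ and $c^*=\max\{c_0,\sup_{\CC_0}\psi\}$ independent of $\om$ and $L$, and since $f\mapsto g$ is a bijection of $H^1(\La_L)$, the min-max principle yields $(c_*/c^*)^2E_n(-\De_{\La_L}^N)\le E_n(H_{\om,\La_L}^N)\le(c^*/c_*)^2E_n(-\De_{\La_L}^N)$ for all $n\in\N_0$, hence
\[
N\big(-\De_{\La_L}^N,(c_*/c^*)^2E\big)\le N\big(H_{\om,\La_L}^N,E\big)\le N\big(-\De_{\La_L}^N,(c^*/c_*)^2E\big).
\]
Dividing by $L^d$, letting $L\to\infty$ (recall \eqref{IDS-expectation-X}), and using the classical fact that $L^{-d}N(-\De_{\La_L}^N,\cdot)$ converges to the IDS of $-\De$, which equals $\gamma_d E^{d/2}$ for $E\ge 0$, I obtain $c_1E^{d/2}\le N(E)\le c_2E^{d/2}$ for small $E>0$, i.e. the van-Hove power law, which in particular excludes Lifshitz tails; the sharp asymptotics $N(E)\sim CE^{d/2}$ then follows by the same comparison refined along the lines of \cite{KN09}.

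The only point requiring care — and the point at which dropping reflection symmetry must be checked to be harmless — is the uniformity in the disorder. One must verify that ``$\psi$ constant near $\pa\CC_0$'' is exactly what makes the glued function $\vp_{\om,\La_L}$ lie in $\DD(H_{\om,\La_L}^N)$ (so it is a genuine zero eigenfunction, with $C^2$ matching across the internal cube faces and the Neumann condition along $\pa\La_L$), that positivity then pins it as the bottom of the spectrum, and that $c_*$, $c^*$ are genuinely $\om$- and $L$-independent so that the sandwich for the eigenvalue counting functions survives both the thermodynamic limit and the passage to expectations in \eqref{IDS-expectation-X}. None of these steps uses reflection symmetry of $\psi$ (or of $u$); reflection symmetry entered \cite{KN09} only to force $u$ to be reflection symmetric, which is irrelevant to the existence of the explicit product ground states and to their uniform two-sided bounds.
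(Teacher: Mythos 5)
Your proof is correct, and for the first bullet it follows essentially the paper's own route: take $\vp_{0}\equiv1$ so that the Mezincescu condition defined via $\vp_{0}$ is Neumann, observe that $\psi$, being constant near $\pa\CC_{0}$, is a strictly positive zero-energy eigenfunction of $-\De_{\CC_{0}}^{N}+u$, and that its periodic extension $\vp_{1}$ is the periodic ground state of $H_{1}$ whose Mezincescu condition again degenerates to Neumann; the only cosmetic difference is that the paper justifies the periodic extension by noting that $\psi$ is also the ground state of the periodic-boundary operator $-\De_{\CC_{0}}^{P}+u$, while you argue directly that the glued function is smooth across the cube faces because $\psi$ is locally constant there. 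For the van-Hove bullet the paper merely remarks that the argument of \cite{KN09} rests on the explicit ground states of $H_{\om}$ restricted to cuboids and hence survives dropping reflection symmetry; you instead give a self-contained argument: the glued strictly positive zero-energy Neumann eigenfunction on $\La_{L}$, the ground-state transform, and a min-max sandwich $(c_{*}/c^{*})^{2}E_{n}(-\De_{\La_{L}}^{N})\leq E_{n}(H_{\om,\La_{L}}^{N})\leq(c^{*}/c_{*})^{2}E_{n}(-\De_{\La_{L}}^{N})$ with constants uniform in $\om$ and $L$, which after normalizing by $L^{d}$ and passing to the limit yields two-sided $E^{d/2}$ bounds on $N$. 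This buys an explicit, quantitative proof of the van-Hove behavior without consulting \cite{KN09}; what it does not immediately give is a sharp asymptotic $N(E)\sim CE^{d/2}$, since your comparison constants differ on the two sides, but the two-sided power bound is exactly what the van-Hove statement means here, so nothing essential is missing. Two small points to make explicit: for the lower bound, apply \eqref{IDS-expectation-X} (or the a.e. convergence of $L^{-d}N(H_{\om,\La_{L}}^{N},\cdot)$ off a countable set, combined with monotonicity of $N$) also to the deterministic operator $-\De$, so that $\inf_{L}L^{-d}N(-\De_{\La_{L}}^{N},(c_{*}/c^{*})^{2}E)$ is identified with the free IDS at $(c_{*}/c^{*})^{2}E$; and in the min-max step note that the bijection $f\mapsto f/\vp_{\om,\La_{L}}$ of $H^{1}(\La_{L})$ preserves dimensions of subspaces, which is what converts the pointwise comparison of Rayleigh quotients into the eigenvalue sandwich.
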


\end{document}